\documentclass[11pt]{article}
\usepackage{amsmath,amssymb,amsthm,geometry,hyperref}
\numberwithin{equation}{section}
\allowdisplaybreaks[2]
\hypersetup{colorlinks=true,linkcolor=blue,citecolor=blue,urlcolor=blue,pdftitle={Green-Function Monotonicity under Positive Bakry-Emery Curvature},pdfsubject={Weighted Green functions, monotonicity, rigidity, and negative effective dimension}}

\usepackage{txfonts}

\newcommand{\Ric}{\operatorname{Ric}}
\newcommand{\Hess}{\operatorname{Hess}}

\newtheorem{theorem}{Theorem}[section]
\newtheorem{proposition}[theorem]{Proposition}
\newtheorem{lemma}[theorem]{Lemma}
\newtheorem{corollary}[theorem]{Corollary}
\theoremstyle{remark}
\newtheorem{remark}[theorem]{Remark}

\begin{document}

\title{Green-Function Monotonicity under Positive Bakry--\'Emery Curvature 
}
\author{Yu-Zhao Wang\footnote{Y.-Z. Wang was supported by the Fundamental Research Program of Shanxi Province (Grant No. 202303021211001).} \quad Zeng-Ting Wu}
\date{}
\maketitle

\begin{abstract}
Inspired by the Green-function monotonicity formulas of Colding, Minicozzi, and Manea, we establish weighted monotonicity formulas on closed weighted Riemannian manifolds satisfying ${\rm Ric}_f^m \ge (m-1)kg$, where $m>n\ge 3$ and $k>0$. Our results extend Manea's elliptic monotonicity framework to the setting of finite-dimensional Bakry-\'Emery curvature and provide a positive-curvature counterpart to the weighted formulas of Song-Wei-Wu. Starting from the Green function of $-\Delta_f+m(m-2)k/4$, we derive an exact weighted Bochner identity and prove three monotonicity formulas, together with a one-parameter family for $\beta\ge (m-2)/(m-1)$, under explicit pole-integrability conditions. We also treat the case of negative effective dimension $m<0$, for which the nonnegative square decomposition remains valid, while the change in the behavior near the Green pole leads to reversed area and volume monotonicity on finite sublevel sets. We conclude with comparisons and  questions concerning entropy monotonicity formulas and positive-curvature RCD spaces.

\end{abstract}
\vspace{3mm}
\noindent\textbf{Mathematics Subject Classification (2020)}. Primary 58J35,	58J65; Secondary 35K92.\\

\noindent\textbf{Key words}. Green-Function,  Monotonicity formula, Bochner formula,  Bakry--\'Emery Curvature.
\vspace{3mm}

\section{Introduction}\label{sec:intro}

\qquad Monotonicity formulae built from Green functions have become an important bridge between potential theory, comparison geometry, and the quantitative study of metric cones.  On a complete nonparabolic manifold with nonnegative Ricci curvature, Colding \cite{Colding} introduced three scale-invariant quantities associated with the positive Green function and proved that two of them are nonincreasing while a distinguished linear combination is nondecreasing .  These identities are tied to the sharp estimate \( |\nabla b|\leq 1 \) for \(b=G^{1/(2-n)}\), and their defect terms measure the failure of the level sets of \(b\) to be conical.  Colding and Minicozzi \cite{ColdingMinicozzi} subsequently obtained a one-parameter family and used its level-set geometry to derive further asymptotic information.  Among other applications, these formulae enter the analysis of uniqueness of tangent cones for Einstein manifolds \cite{unique}. Also refer to survey\cite{CM}.

The corresponding problem under a positive Ricci lower bound is subtler.  A closed manifold has no positive Green function for the Laplacian itself, and the Euclidean comparison function must be replaced by its spherical analogue. Recently,  Manea  \cite{Manea}  resolved both issues by using the Green function of
\[
 -\Delta+\frac{n(n-2)}4\kappa
\]
on a closed manifold satisfying \(\Ric\geq(n-1)\kappa g\).  He obtained a sharp spherical gradient estimate, three curvature-corrected monotonicity formulae, their one-parameter extensions, and a rigidity theorem characterizing the round sphere.  A decisive feature of his calculation is that, after writing
\(
G=\left(2\operatorname{sn}_{\kappa}(s/2)\right)^{2-n},
\)
the square of \(2\operatorname{sn}_{\kappa}(s/2)\) has a trace-free Hessian defect with a favorable sign.

Weighted analogues in nonnegative curvature were studied earlier by Song, Wei, and Wu \cite{SongWeiWu}.  On a complete \(f\)-nonparabolic smooth metric measure space with nonnegative finite-dimensional Bakry--\'Emery Ricci curvature, they introduced several parameters in the powers of the Green function and in the level and sublevel integrals.  Their framework contains the Colding and Colding--Minicozzi quantities and yields many additional families.
Gigli and Violo \cite{GigliViolo} subsequently obtained a related
parameter threshold in the $\mathrm{RCD}(0,m)$ harmonic setting. 

 The purpose of the present paper is to determine precisely which parts of Manea's positive-curvature theory survive under the finite-dimensional Bakry--\'Emery condition \cite{BakryEmery}
\begin{equation}\label{BE}
 \Ric_f^m:=\Ric+\Hess f-\frac{df\otimes df}{m-n}\geq(m-1)kg,\quad m>n\geq3.
\end{equation}
and to identify the new obstructions caused by the weighted structure.  Our main observation is that the correct replacement of the ordinary trace-free Hessian is not merely
\(
\Hess h-(\Delta_fh/m)g.
\)
Instead one must retain the complete finite-dimensional remainder in the generalized Bochner formula :
\begin{equation}\label{D}
 \mathcal D(h):=|\mathring{\Hess}\ h|^2+
 \frac{n}{m(m-n)}\left(\frac{m-n}{n}\Delta h+
                 \langle\nabla f,\nabla h\rangle\right)^2
 +\bigl(\Ric_f^m-(m-1)kg\bigr)(\nabla h,\nabla h),
\end{equation}
where  $\mathring{\Hess}\ h=\Hess h-(\Delta h/n)g$ is the trace-free tensor. This is exactly the nonnegative defect singled out by the \(\Gamma_2\) formalism; see Villani \cite[Chapter~14]{Villani}.  It remains meaningful both for \(m>n\) and for \(m<0\).

Let \(G\) be the Green function with pole \(p\) of
\begin{equation}\label{Deltaf}
 -\Delta_f+\frac{m(m-2)}4k,
\end{equation}
 define
\begin{equation}\label{sv}
s=\tfrac12G^{1/(2-m)},\qquad
v=(4|\nabla s|^2+ks^2)^{1/2},\qquad
\alpha=\frac{n-2}{m-2}.
\end{equation}
For a smooth weight, $G\sim c_p\rho^{2-n}$ at its pole, where $\rho=d_g(p,\cdot)$. Consequently $s\asymp\rho^\alpha$ and $v\asymp\rho^{\alpha-1}$. If $m>n$, then $\alpha<1$ and $v$ diverges. Replacing the unweighted dimension by $m$ in the differential calculation does not change the actual-dimensional fundamental solution. In particular, the unweighted finite pole normalization cannot be transferred to this problem.

\paragraph{Main results.}
Throughout Sections~\ref{sec:setup}--\ref{sec:rigidity}, the manifold is closed and connected, $f$ is smooth, $m>n\ge3$, and $\operatorname{Ric}_f^m\ge(m-1)kg$ with $k>0$. The source mass of $G$ is prescribed by \eqref{eq:2.5}. Section~\ref{sec:setup} proves the exact identity
\begin{equation}\label{Ls}
\mathcal L_f(4|\nabla s|^2+ks^2)
=\frac{2}{s^2}\mathcal{D}(s^2),
\end{equation}
where $\mathcal L_f=G^{-2}\operatorname{div}_f(G^2\nabla\cdot)$. No pole integrability is needed for this identity on the punctured manifold.

Inspired by Manea's work in \cite{Manea}, the weighted level-set quantities $A_f,V_f$ are defined as follows:
\begin{equation}\label{eq:4.5}
\begin{aligned}
A_f(r)
:={}&
(2\operatorname{sn}_k(r/2))^{1-m}\operatorname{cs}_k(r/2)
\int_{b_f=r}(4|\nabla s|^2+ks^2)|\nabla b_f|e^{-f}d\sigma\\
&+\frac{mk}{4}\int_{b_f<r}(4|\nabla s|^2+ks^2)G e^{-f}dV,\\
V_f(r)
:={}&
(2\operatorname{sn}_k(r/2))^{-m}
\int_{b_f<r}(4|\nabla s|^2+ks^2)(4|\nabla s|^2-ks^2)e^{-f}dV\\
&+\frac{k}{4}\int_{b_f<r}(4|\nabla s|^2+ks^2)G e^{-f}dV.
\end{aligned}
\end{equation}

\begin{theorem}[Theorem \ref{Theorem 3.6.} and \ref{Theorem 3.11.}]\label{3mono}
Let $(M,g,e^{-f}dV)$ be a closed weighted Riemannian manifold of dimension $n\ge3$ with $Ric^m_f\ge(m-1)kg$ for  $m\ge n$ and $k>0$. Let $G$ be weighted Green's function for the operator $-\Delta_f+m(m-2)k/4$, put $s=\tfrac12G^{1/(2-m)}$,  then 
\begin{align}\label{DA}
A_f'(r)&=
-8(2\operatorname{sn}_k(r/2))^{m-3}\operatorname{cs}_k(r/2)\\&\cdot\int_{s>\operatorname{sn}_k(r/2)}
(2s)^{2-2m}\left[
|\mathring{\operatorname{Hess}}s^2|^2
+D_m
+(\operatorname{Ric}_f^m-(m-1)kg)(\nabla s^2,\nabla s^2)
\right]e^{-f}dV\le0,\notag\\
\label{DV}
V_f'(r)&=\frac{1}{2\tan_k(r/2)}(A_f(r)-mV_f(r))\le0,
\end{align}
and
\begin{equation}\label{eq:4.22}
\begin{aligned}
&A_f'(r)-2(m-1)V_f'(r)\\
&=
\frac{8\operatorname{cs}_k(r/2)}{(2\operatorname{sn}_k(r/2))^{m+1}}
\int_{b_f<r}\left[
|\mathring{\operatorname{Hess}}s^2|^2
+D_m
+(\operatorname{Ric}_f^m-(m-1)kg)(\nabla s^2,\nabla s^2)
\right]e^{-f}dV\ge0,
\end{aligned}
\end{equation}
where $D_m$ is defined by
\begin{equation}\label{Dm}
D_m:=
\frac{n}{m(m-n)}
\left(\frac{m-n}{n}\Delta s^2+\langle\nabla f,\nabla s^2\rangle\right)^2\ge0.
\end{equation}
Therefore \(A_f\) and \(V_f\) are nonincreasing and \(A_f-2(m-1)V_f\) is nondecreasing.
\end{theorem}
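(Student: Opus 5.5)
The plan follows Manea's scheme, run with the weighted operator and the exact identity \eqref{Ls} as the engine. The level-set variable is $b_f$, defined so that $\{b_f=r\}=\{s=\operatorname{sn}_k(r/2)\}$; equivalently $b_f=2\operatorname{sn}_k^{-1}(s)$, so that $\{b_f<r\}=\{s>\operatorname{sn}_k(r/2)\}$ matches the pole side in \eqref{DA}. Set $\Phi:=4|\nabla s|^2+ks^2=v^2$.

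The first step rewrites the boundary term of $A_f$ as a flux. On $\{b_f=r\}$ we have $\nabla b_f=\nabla s/\operatorname{cs}_k(r/2)$. Hence
\[
\int_{b_f=r}\Phi|\nabla b_f|e^{-f}d\sigma=\operatorname{cs}_k(r/2)^{-1}\int_{b_f=r}\Phi|\nabla s|^{-1}\langle\nabla s,\nabla s\rangle\, e^{-f}d\sigma .
\]
Next I would express $|\nabla s|$ through $G$: from $G=(2s)^{2-m}$ we get $\nabla G=(2-m)2^{2-m}s^{1-m}\nabla s$. Thus, up to the explicit factor $(2\operatorname{sn})^{1-m}$ evaluated on the level set, $A_f$'s boundary term becomes a multiple of $\int_{b_f=r}\Phi\, G^{2}\,G^{-2}\langle\nabla G,\nu\rangle e^{-f}$. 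Because $G$ is constant on the level set, this is essentially $\int_{\{b_f=r\}}\langle G^2\nabla\Phi\ldots\rangle$ type flux once combined with the divergence theorem.

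Concretely, I would integrate $\operatorname{div}_f(\Phi\,\nabla G)$ over the annulus $\{r_1<b_f<r_2\}$. I would use the equation $\Delta_f G=\frac{m(m-2)}4kG$ away from $p$, and the relation $\operatorname{div}_f(G^2\nabla\Phi)=G^2\mathcal L_f\Phi=\frac{2G^2}{s^2}\mathcal D(s^2)$ from \eqref{Ls}. The term $\frac{m(m-2)}4k\Phi G$ produced by $\Delta_f G$ is exactly what the volume correction $\frac{mk}4\int_{b_f<r}\Phi G$ is designed to absorb, after accounting for the $\operatorname{cs}_k$ and $\operatorname{sn}_k$ prefactors. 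Differentiating in $r$ by the coarea formula should give \eqref{DA}. The integrand there is $(2s)^{2-2m}\mathcal D(s^2)$, up to the constant coming from $G^2/s^2=2^{4-2m}s^{2-2m}\cdot s^{-2}$ rescaled; and $\mathcal D(s^2)$ is precisely the bracket, with $D_m$ the middle square of \eqref{D}. The sign is immediate since each term of $\mathcal D$ is nonnegative by \eqref{BE}.

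For $V_f$, I would differentiate via coarea: $\frac{d}{dr}\int_{b_f<r}F\,e^{-f}dV=\int_{b_f=r}F|\nabla b_f|^{-1}e^{-f}d\sigma$. Combined with $\frac{d}{dr}(2\operatorname{sn}_k(r/2))^{-m}=-\frac{m}{2\tan_k(r/2)}(2\operatorname{sn}_k)^{-m}$, this leaves the boundary integral of $\Phi(4|\nabla s|^2-ks^2)/|\nabla b_f|$. Since $4|\nabla s|^2=4\operatorname{cs}_k^2|\nabla b_f|^2$ and $ks^2=k\operatorname{sn}_k^2$ on the level set, this boundary integral should match the $A_f$ boundary term times $\frac{1}{2\tan_k}$, and the $G$ volume terms should match as well, yielding the ODE in \eqref{DV}. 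The inequality $V_f'\le0$, i.e.\ $A_f\le mV_f$, would then follow by integrating that ODE together with $A_f'\le0$, through a relation of the form $(\operatorname{sn}^m V_f)'=\ldots A_f$. Finally, \eqref{eq:4.22} follows algebraically: substitute \eqref{DV}, express $A_f-mV_f$ as an integral of $\mathcal D$ over the sublevel set, and check the sign.

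The main obstacle is the behavior at the pole. Since $v\asymp\rho^{\alpha-1}$ diverges when $m>n$, the boundary contributions on small spheres around $p$ in the divergence-theorem step need not vanish, nor tend to the unweighted normalization. I would first test whether the flux $\int_{\partial B_\epsilon}\Phi G^2\partial_\nu(\cdot)$ has a finite limit fixed by the source mass \eqref{eq:2.5}. If it does not, I would impose, as the paper hints, the pole-integrability hypothesis that $\Phi G$ and $(2s)^{2-2m}\mathcal D(s^2)$ are integrable near $p$. With that hypothesis the pole boundary terms drop out, and all integrals over $\{b_f<r\}$ are finite.
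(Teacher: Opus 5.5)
Your proposal has a genuine gap, and it begins with the orientation of the regions. Since $m>2$, $s=\tfrac12G^{1/(2-m)}\to0$ at $p$, and $b_f=2\arcsin_k(s)$ is increasing in $s$. Hence $\{b_f<r\}=\{s<\operatorname{sn}_k(r/2)\}$ is the neighbourhood of the pole. The domain $\{s>\operatorname{sn}_k(r/2)\}$ in \eqref{DA} is the complementary exterior region: it is compact, avoids $p$, and (because $M$ is closed) has the level set as its only boundary. This is the idea you are missing.

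Your annulus/coarea step correctly yields $A_f'(r)=(2\operatorname{sn}_k(r/2))^{1-m}\operatorname{cs}_k(r/2)\int_{b_f=r}\langle\nabla\Phi,\nu\rangle e^{-f}d\sigma$. Since $G^2=(2\operatorname{sn}_k(r/2))^{4-2m}$ is constant on the level set, you should then apply the divergence theorem to $G^2\nabla\Phi$ on the exterior, whose outward normal is $-\nu$, and use $\operatorname{div}_f(G^2\nabla\Phi)=G^2\mathcal L_f\Phi=8(2s)^{2-2m}\mathcal D(s^2)$. This gives \eqref{DA}, including its minus sign, for every $m>n$ with no assumption at the pole.

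The pole-side route you describe cannot be rescued by the hypothesis you propose. $D_m$ has a strictly positive leading term of order $\rho^{4\alpha-4}$, so $(2s)^{2-2m}\mathcal D(s^2)\asymp\rho^{2\alpha-2n}$, which is never integrable against $\rho^{n-1}d\rho$ when $m>n$. Likewise, the inner flux of $G^2\nabla\Phi$ through $\{\rho=\epsilon\}$ is negative with magnitude $\asymp\epsilon^{2\alpha-n}\to\infty$. Discarding it would even produce the wrong sign $A_f'\ge0$.

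The formula for $A_f'-2(m-1)V_f'$ also does not follow algebraically from \eqref{DA} and \eqref{DV}. It is an integral of the unweighted defect over the pole side $\{b_f<r\}$, whereas \eqref{DA} is an exterior integral with weight $(2s)^{2-2m}$, and $A_f-mV_f$ is not itself an integral of $\mathcal D(s^2)$. What is needed is a second divergence identity, obtained by multiplying \eqref{Ls} by $s^2$ and using $\Delta_fs^2=\tfrac m2(4|\nabla s|^2-ks^2)$:
\[
\operatorname{div}_f\bigl(s^2\nabla\Phi-(m-1)\Phi\nabla s^2\bigr)=2\mathcal D(s^2)-\tfrac{m(m-1)}{2}\,\Phi\,(4|\nabla s|^2-ks^2).
\]
Integrate it over $\{b_f<r\}\setminus B_\epsilon(p)$. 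The inner flux is $O(\epsilon^{n+4\alpha-4})$, so this is where the condition $n+4\alpha-4>0$ enters; it is also what makes $V_f$ finite, and your proposal never states it. After the two $\Phi G$ corrections cancel, this yields $2\int_{b_f<r}\mathcal D(s^2)e^{-f}dV=\frac{(2\operatorname{sn}_k(r/2))^{m+1}}{4\operatorname{cs}_k(r/2)}A_f'-\frac{m-1}{2}(2\operatorname{sn}_k(r/2))^m(A_f-mV_f)$. Then \eqref{DV} converts $A_f-mV_f$ into $2\tan_k(r/2)V_f'$.

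Your coarea derivation of the ODE in \eqref{DV} and the averaging argument for $A_f\le mV_f$ are sound. Note that $\nabla b_f=2\nabla s/\operatorname{cs}_k(b_f/2)$, so $4|\nabla s|^2=\operatorname{cs}_k^2(b_f/2)|\nabla b_f|^2$. The averaging also uses $(2\operatorname{sn}_k(r/2))^mV_f\to0$, which again rests on $n+4\alpha-4>0$.
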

\begin{remark}
The first monotonicty in Theorem \ref{3mono} holds for every $m>n\ge3$. The statements involving $V_f$ require $n+4\alpha-4>0$, which is automatic for $n\ge4$ and is equivalent to $3<m<6$ for $n=3$, where $\alpha=\frac{n-2}{m-2}$. We retain both the divergence proof and the complete coefficient-cancellation proof.
\end{remark}
For the family in Section~\ref{sec:beta}, $A_{f,\beta}=I_{f,v^\beta}$ is nonincreasing when
\[
\frac{m-2}{m-1}\le\beta<\frac{2}{1-\alpha}.
\]
With $V_{f,\beta}=W_{f,v^\beta}$ and the stronger restriction
\[
\frac{m-2}{m-1}\le\beta<\frac{\min\{2,n-2\}}{1-\alpha},
\]
we also have
\begin{equation}\label{betaAV}
A_{f,\beta}'\le0,\qquad V_{f,\beta}'\le0,\qquad
\bigl(A_{f,\beta}-2(m-2)V_{f,\beta}\bigr)'\ge0.
\end{equation}
The two upper bounds are strict because their endpoints produce logarithmically divergent integrals. Although $A_{f,2}=A_f$, the volume kernels defining $V_{f,2}$ and $V_f$ differ. This distinction accounts for their different convergence conditions.

Section~\ref{sec:rigidity} extracts the complete equality conditions, including the lower endpoint of the $\beta$ range. On a regular equality region they force a sine-warped metric with a specified radial weight. These local conditions are incompatible with equality on the relevant full sublevel or exterior regions of the smooth closed problem when $m>n$. 

In Section~\ref{sec:negative}, we also examine \(m<0\), a range in which the generalized Bochner formula and the curvature-dimension condition remain valid; see Ohta \cite{Ohta}.   The exact squares retain their signs, but finite sublevel sets avoid the pole, and the three natural quantities are nondecreasing. Here $(m-1)k<0$, so this is a negative curvature lower bound, not a positive-curvature theorem in disguise.

These results give a direct weighted version of the positive-curvature calculation together with its analytic domain of validity. They also identify the extra information needed before one can discuss singular equality models, renormalized pole quantities, or extensions to metric measure spaces. Section~\ref{sec:comparison} compares the formulas with existing work and formulates these further questions. 

\section{The Green function and the weighted Bochner formulas}\label{sec:setup}

Let \((M^n,g, e^{-f}dV)\) be a closed, connected smooth weighed Riemannian manifold, where \(f\in C^\infty(M)\), let \(m>n\ge 3\), and  \(k>0\). The volume element and hypersurface area element are denoted by \(dV\) and \(d\sigma\), respectively. Define
\begin{equation}\label{eq:2.1}
\Delta_f u=\Delta u-\langle\nabla f,\nabla u\rangle,\qquad
\operatorname{Ric}_f^m=\operatorname{Ric}+\operatorname{Hess}f-\frac{df\otimes df}{m-n},
\end{equation}
where \(\Delta=\operatorname{div}\nabla\), and assume that
\begin{equation}\label{eq:2.2}
\operatorname{Ric}_f^m\ge (m-1)kg.
\end{equation}
For a vector field \(Z\), \(\operatorname{div}_f Z=e^f\operatorname{div}(e^{-f}Z)\), hence
\begin{equation}\label{eq:2.3}
\int_U \operatorname{div}_f Z\,e^{-f}dV
=
\int_{\partial U}\langle Z,\nu\rangle e^{-f}d\sigma.
\end{equation}
Fix \(p\in M\). Let \(G\) be the positive Green function of
\begin{equation}\label{eq:2.4}
L_f=-\Delta_f+\frac{m(m-2)k}{4},
\end{equation}
with the explicit weak normalization
\begin{equation}\label{eq:2.5}
\int_M G L_f\phi\,e^{-f}dV=\kappa\phi(p),\qquad \kappa>0,\quad \phi\in C^\infty(M).
\end{equation}
The positivity of the operator and standard elliptic theory give existence and uniqueness of this Green function, and the maximum principle gives its positivity. On \(M\setminus\{p\}\),
\begin{equation}\label{eq:2.6}
\Delta_f G=\frac{m(m-2)k}{4}G.
\end{equation}

\subsection{The level-set parameter \(s\) and Manea's parameter}

Throughout we use only the abbreviation
\begin{equation}\label{eq:2.7}
s=\frac12 G^{1/(2-m)},\qquad G=(2s)^{2-m}.
\end{equation}
We extend continuously by \(s(p)=0\). Define
\[
\operatorname{sn}_k(r)=\frac{\sin(\sqrt{k}r)}{\sqrt{k}},\qquad
\operatorname{cs}_k(r)=\cos(\sqrt{k}r),\qquad
\tan_k=\frac{\operatorname{sn}_k}{\operatorname{cs}_k},\qquad
\operatorname{ct}_k=\frac{\operatorname{cs}_k}{\operatorname{sn}_k}.
\]
On the region \(s<1/\sqrt{k}\), define
\begin{equation}\label{eq:2.8}
b_f=2\arcsin_k(s),\qquad \arcsin_k(s)=\frac1{\sqrt k}\arcsin(\sqrt{k}s),\qquad s=\operatorname{sn}_k(b_f/2).
\end{equation}
On the level set \(b_f=r\), the value of \(s\) is \(\operatorname{sn}_k(r/2)\), and
\begin{equation}\label{eq:2.9}
2\nabla s=\operatorname{cs}_k(b_f/2)\nabla b_f,\qquad
\nu=\frac{\nabla s}{|\nabla s|}=\frac{\nabla b_f}{|\nabla b_f|}.
\end{equation}
In Section 3 we take \(r\) as the parameter and require
\begin{equation}\label{eq:2.10}
0<r<\frac{\pi}{\sqrt{k}},\qquad
\operatorname{sn}_k(r/2)<\max_M s.
\end{equation}
Then \(\operatorname{cs}_k(r/2)>0\). The notation \(\{b_f<r\}\) and \(\{b_f=r\}\) denotes respectively \(\{s<\operatorname{sn}_k(r/2)\}\) and \(\{s=\operatorname{sn}_k(r/2)\}\); all these sets lie in the region where \(b_f\) is defined. The exterior region is always written as \(\{s>\operatorname{sn}_k(r/2)\}\), so that one need not assume without proof that \(b_f\) is defined on the whole manifold. If a global range estimate \(s\le 1/\sqrt{k}\) is available, then the exterior region can also be written as \(\{b_f>r\}\) as in the original paper. This paper does not assume this unproved range estimate.

\subsection{Pole asymptotics and normalization constant}

Let \(\rho=d_g(p,\cdot)\) and \(\omega_{n-1}=|\mathbb S^{n-1}|\). We use the standard local asymptotics for the Green function of a smooth elliptic operator:
\begin{equation}\label{eq:2.11}
\begin{aligned}
G&=c_p\rho^{2-n}+o(\rho^{2-n}),\qquad
c_p=\frac{\kappa e^{f(p)}}{(n-2)\omega_{n-1}},\\
\nabla^j(G-c_p\rho^{2-n})&=o(\rho^{2-n-j}),\qquad j=1,2,3.
\end{aligned}
\end{equation}
The derivative remainders are uniform on annuli in rescaled normal coordinates. The polar parametrix applies because
\begin{equation}\label{eq:2.12}
e^{-f/2}L_f(e^{f/2}v)
=
-\Delta v+
\left(\frac{m(m-2)k}{4}+\frac{|\nabla f|^2}{4}-\frac{\Delta f}{2}\right)v.
\end{equation}
The right-hand side is a smooth Schrödinger operator whose principal part is the same as the Laplacian. The above asymptotics follow from a local fundamental-solution construction and elliptic interior estimates on rescaled annuli; we treat this as standard analytic input and do not repeat the parametrix construction. From the weak normalization one can also independently verify that
\begin{equation}\label{eq:2.13}
-\lim_{\epsilon\downarrow0}\int_{\rho=\epsilon}\langle\nabla G,\nabla\rho\rangle e^{-f}d\sigma
=
(n-2)c_p e^{-f(p)}\omega_{n-1}
=
\kappa.
\end{equation}
For stating the asymptotic orders, we retain the necessary exponent
\begin{equation}\label{eq:2.14}
\alpha=\frac{n-2}{m-2}\in(0,1).
\end{equation}
By the chain rule,
\begin{equation}\label{eq:2.15}
2s=c_p^{-1/(m-2)}\rho^\alpha(1+o(1)).
\end{equation}
\begin{equation}\label{eq:2.16}
4|\nabla s|^2+ks^2=\alpha^2c_p^{-2/(m-2)}\rho^{2\alpha-2}(1+o(1)).
\end{equation}
\begin{equation}\label{eq:2.17}
|\operatorname{Hess}s^2|=O(\rho^{2\alpha-2}),\qquad
|\nabla(4|\nabla s|^2+ks^2)|=O(\rho^{2\alpha-3}).
\end{equation}
Since the leading radial derivative is strictly positive, small level sets near \(p\) are radial graphs. Their geodesic radius satisfies
\[
\rho\sim \left(2c_p^{1/(m-2)}\operatorname{sn}_k(r/2)\right)^{1/\alpha},
\]
and the area element satisfies \(e^{-f}d\sigma=e^{-f(p)}\rho^{n-1}(1+o(1))\,d\omega\). Therefore the following level-set asymptotics are justified by the implicit function theorem and the derivative asymptotics.

\subsection{Weighted Bochner calculations}

\begin{lemma}On \(M\setminus\{p\}\),
\begin{equation}\label{eq:3.1}
\Delta_f s=(m-1)\frac{|\nabla s|^2}{s}-\frac{mk}{4}s.
\end{equation}
\begin{equation}\label{eq:3.2}
\Delta_f s^2=2m\left(|\nabla s|^2-\frac{k}{4}s^2\right).
\end{equation}
\begin{equation}\label{eq:3.3}
(\Delta_f+mk)s^2=2m\left(|\nabla s|^2+\frac{k}{4}s^2\right).
\end{equation}
\end{lemma}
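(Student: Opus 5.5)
The plan is a direct chain-rule computation. I will start from the Green-function equation \eqref{eq:2.6} and the relation $G=(2s)^{2-m}$ from \eqref{eq:2.7}, which holds on $M\setminus\{p\}$. There $G>0$ is smooth, so $s$ is smooth and positive. For a smooth positive function $\varphi$ of one variable, the weighted Laplacian obeys the rule
\[
\Delta_f\varphi(s)=\varphi'(s)\Delta_f s+\varphi''(s)|\nabla s|^2 .
\]
This holds because $\Delta_f=\Delta-\langle\nabla f,\nabla\cdot\rangle$ and the drift term is first order.

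For \eqref{eq:3.1}, take $\varphi(s)=(2s)^{2-m}$. Then
\[
\varphi'(s)=2(2-m)(2s)^{1-m},\qquad \varphi''(s)=4(2-m)(1-m)(2s)^{-m}.
\]
Substituting into \eqref{eq:2.6} gives
\[
2(2-m)(2s)^{1-m}\Delta_f s+4(m-2)(m-1)(2s)^{-m}|\nabla s|^2=\frac{m(m-2)k}{4}(2s)^{2-m}.
\]
Next, divide by $2(2-m)(2s)^{1-m}$, which is nonzero since $m>n\ge3$. The curvature term becomes
\[
-\frac{mk}{8}\,(2s)=-\frac{mk}{4}s,
\]
and the gradient term becomes
\[
-\frac{2(m-1)}{2s}|\nabla s|^2,
\]
which moves to the right-hand side with a plus sign. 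Together these give exactly \eqref{eq:3.1}.

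For \eqref{eq:3.2}, apply the same rule with $\varphi(s)=s^2$:
\[
\Delta_f s^2=2s\,\Delta_f s+2|\nabla s|^2.
\]
Inserting \eqref{eq:3.1} yields
\[
2(m-1)|\nabla s|^2-\frac{mk}{2}s^2+2|\nabla s|^2=2m|\nabla s|^2-\frac{mk}{2}s^2,
\]
which is \eqref{eq:3.2} after factoring out $2m$. Finally, \eqref{eq:3.3} follows by adding $mk\,s^2=2m\cdot\frac{k}{2}s^2$ to \eqref{eq:3.2}, which turns $-\frac k4 s^2$ into $+\frac k4 s^2$.

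No step is a real obstacle. The only care needed is the sign and coefficient bookkeeping in the first step: the factors $(2-m)$ and $(1-m)$ combine to the positive $(m-2)(m-1)$, and the factor $2$ coming from $2s$ must be tracked. The restriction to $M\setminus\{p\}$ ensures $s>0$ and smoothness, so no pole analysis is needed.
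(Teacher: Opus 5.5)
Your proposal is correct and follows essentially the same route as the paper. Both expand $\Delta_f G$ by the chain rule for $G=(2s)^{2-m}$, divide by the nonvanishing prefactor, and use the Green equation $\Delta_f G=\frac{m(m-2)k}{4}G$ to obtain the formula for $\Delta_f s$. Both then substitute into $\Delta_f s^2=2s\Delta_f s+2|\nabla s|^2$ and add $mks^2$, and your coefficient bookkeeping checks out.
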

\begin{proof}
The chain rule and \eqref{eq:2.7} give,
\[
\Delta_f G=2^{2-m}(2-m)\left(s^{1-m}\Delta_f s+(1-m)s^{-m}|\nabla s|^2\right).
\]
Dividing by \(G\) and using \eqref{eq:2.6}, we obtain
\[
\frac{m(m-2)k}{4}
=
(2-m)\left(\frac{\Delta_f s}{s}+(1-m)\frac{|\nabla s|^2}{s^2}\right),
\]
which is \eqref{eq:3.1}. Substituting this into \(\Delta_f(s^2)=2s\Delta_f s+2|\nabla s|^2\) gives the second identity; adding \(mks^2\) gives the third.
\end{proof}

%\subsection*{The coefficient of the weighted square term}

Define
\begin{equation}\label{eq:3.4}
D_m:=
\frac{n}{m(m-n)}
\left(\frac{m-n}{n}\Delta s^2+\langle\nabla f,\nabla s^2\rangle\right)^2\ge0.
\end{equation}
Let \(\mathring{\operatorname{Hess}}v=\operatorname{Hess}v-(\Delta v/n)g\), whose trace is taken with respect to the actual dimension \(n\). A direct expansion of the square verifies that
\begin{equation}\label{eq:3.5}
\begin{aligned}
&\frac{(\Delta s^2)^2}{n}
+\frac{\langle\nabla f,\nabla s^2\rangle^2}{m-n}
-\frac{(\Delta_f s^2)^2}{m}\\
&=
\frac{m-n}{mn}(\Delta s^2)^2
+\frac{2}{m}\Delta s^2\langle\nabla f,\nabla s^2\rangle
+\frac{n}{m(m-n)}\langle\nabla f,\nabla s^2\rangle^2
=D_m.
\end{aligned}
\end{equation}
Thus the exact weighted Bochner formula (see Villani \cite[Chapter~14]{Villani}) is
\begin{equation}\label{eq:3.6}
\begin{aligned}
\frac12\Delta_f|\nabla s^2|^2
=
\langle\nabla s^2,\nabla\Delta_f s^2\rangle
+\frac{(\Delta_f s^2)^2}{m}
+|\mathring{\operatorname{Hess}}s^2|^2+D_m+\operatorname{Ric}_f^m(\nabla s^2,\nabla s^2).
\end{aligned}
\end{equation}
Here the coefficient in front of \eqref{eq:3.4} cannot be omitted; moreover, the \(\Delta s^2/n\) in the trace-free Hessian cannot be replaced by \(\Delta_f s^2/m\).

\begin{lemma} \label{Lemma 2.2.}One has
\begin{equation}\label{eq:3.7}
\begin{aligned}
&2s^2\left(\Delta_f+\frac{3mk}{2}\right)|\nabla s|^2
+(4-2m)\langle\nabla s^2,\nabla|\nabla s|^2\rangle\\
&=
|\mathring{\operatorname{Hess}}s^2|^2
+D_m
+\operatorname{Ric}_f^m(\nabla s^2,\nabla s^2)
+\frac{mk^2}{4}s^4.
\end{aligned}
\end{equation}
\end{lemma}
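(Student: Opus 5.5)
The plan is to apply the exact weighted Bochner formula \eqref{eq:3.6} to the single function $u=s^2$ on $M\setminus\{p\}$. Each term other than the three nonnegative defects is then rewritten using the Laplacian identity \eqref{eq:3.2}, $\Delta_f s^2=2m(|\nabla s|^2-\tfrac k4 s^2)$. All quantities are smooth away from the pole, so no integrability issue arises: the statement is a pointwise identity on the punctured manifold.

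First I expand the left-hand side of \eqref{eq:3.6}. Since $|\nabla s^2|^2=4s^2|\nabla s|^2$, the product rule for $\Delta_f$ gives
\[
\tfrac12\Delta_f|\nabla s^2|^2
=2s^2\Delta_f|\nabla s|^2+2|\nabla s|^2\Delta_f s^2+4\langle\nabla s^2,\nabla|\nabla s|^2\rangle .
\]
Substituting \eqref{eq:3.2}, the middle term becomes $4m|\nabla s|^4-mks^2|\nabla s|^2$.

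Next I expand the two non-defect terms on the right of \eqref{eq:3.6}. Differentiating \eqref{eq:3.2} gives $\nabla\Delta_f s^2=2m\nabla|\nabla s|^2-\tfrac{mk}{2}\nabla s^2$. Hence
\[
\langle\nabla s^2,\nabla\Delta_f s^2\rangle=2m\langle\nabla s^2,\nabla|\nabla s|^2\rangle-2mks^2|\nabla s|^2 .
\]
Squaring \eqref{eq:3.2} gives
\[
\frac{(\Delta_f s^2)^2}{m}=4m|\nabla s|^4-2mks^2|\nabla s|^2+\frac{mk^2}{4}s^4 .
\]

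Finally I equate the two sides and collect terms. The quartic terms $4m|\nabla s|^4$ cancel. The gradient-pairing terms combine into $(4-2m)\langle\nabla s^2,\nabla|\nabla s|^2\rangle$ on the left. The mixed terms $s^2|\nabla s|^2$ give $-mk$ on the left against $-4mk$ on the right, which leaves a net $+3mk\,s^2|\nabla s|^2$ on the left; this equals $2s^2\cdot\tfrac{3mk}{2}|\nabla s|^2$ and produces the operator $\Delta_f+\tfrac{3mk}{2}$ in \eqref{eq:3.7}. What remains on the right is exactly $|\mathring{\operatorname{Hess}}s^2|^2+D_m+\operatorname{Ric}_f^m(\nabla s^2,\nabla s^2)+\tfrac{mk^2}{4}s^4$.

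The only real obstacle is bookkeeping: tracking the constants in the mixed $s^2|\nabla s|^2$ terms, and keeping the trace-free part normalized by the actual dimension $n$ with $D_m$ intact, exactly as \eqref{eq:3.5}--\eqref{eq:3.6} prescribe.
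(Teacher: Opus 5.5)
Your proposal is correct and follows the paper's proof essentially step for step. You apply \eqref{eq:3.6} to $s^2$, expand $\tfrac12\Delta_f(4s^2|\nabla s|^2)$ by the product rule, substitute \eqref{eq:3.2} into the gradient and square terms, cancel $4m|\nabla s|^4$, and collect the net $3mk\,s^2|\nabla s|^2$ and $(4-2m)$ cross terms. All of your constants agree with \eqref{eq:3.8}--\eqref{eq:3.11}.
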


\begin{proof}
Since \(|\nabla s^2|^2=4s^2|\nabla s|^2\), the left-hand side of the Bochner formula expands as
\begin{equation}\label{eq:3.8}
\begin{aligned}
\frac12\Delta_f|\nabla s^2|^2
&=2\Delta_f(s^2|\nabla s|^2)\\
&=2s^2\Delta_f|\nabla s|^2
+2|\nabla s|^2\Delta_f s^2
+4\langle\nabla s^2,\nabla|\nabla s|^2\rangle\\
&=2s^2\Delta_f|\nabla s|^2
+4m|\nabla s|^4
-mks^2|\nabla s|^2
+4\langle\nabla s^2,\nabla|\nabla s|^2\rangle.
\end{aligned}
\end{equation}
The gradient term on the right-hand side is
\begin{equation}\label{eq:3.9}
\begin{aligned}
\langle\nabla s^2,\nabla\Delta_f s^2\rangle
=2m\langle\nabla s^2,\nabla|\nabla s|^2\rangle
-\frac{mk}{2}|\nabla s^2|^2
=2m\langle\nabla s^2,\nabla|\nabla s|^2\rangle
-2mks^2|\nabla s|^2.
\end{aligned}
\end{equation}
The square term is
\begin{equation}\label{eq:3.10}
\begin{aligned}
\frac{(\Delta_f s^2)^2}{m}
=\frac1m\left(2m|\nabla s|^2-\frac{mk}{2}s^2\right)^2
=4m|\nabla s|^4-2mks^2|\nabla s|^2+\frac{mk^2}{4}s^4.
\end{aligned}
\end{equation}
Substituting these two formulas into \eqref{eq:3.6} and comparing with \eqref{eq:3.8}, the terms \(4m|\nabla s|^4\) cancel, and we obtain
\begin{equation}\label{eq:3.11}
\begin{aligned}
2s^2\Delta_f|\nabla s|^2
={}&
|\mathring{\operatorname{Hess}}s^2|^2+D_m+\operatorname{Ric}_f^m(\nabla s^2,\nabla s^2)\\
&+(2m-4)\langle\nabla s^2,\nabla|\nabla s|^2\rangle
-3mks^2|\nabla s|^2+\frac{mk^2}{4}s^4.
\end{aligned}
\end{equation}
Moving terms gives \eqref{eq:3.7}.
\end{proof}

\begin{proposition} One has
\begin{equation}\label{eq:3.12}
\begin{aligned}
&\left(\Delta_f-\frac{m(m-2)k}{4}\right)(|\nabla s|^2G)\\
&=
\frac{G}{2s^2}\left[
|\mathring{\operatorname{Hess}}s^2|^2
+D_m
+(\operatorname{Ric}_f^m-(m-1)kg)(\nabla s^2,\nabla s^2)
\right]+\frac{k(m-4)}{2}G|\nabla s|^2
+\frac{mk^2}{8}s^2G.
\end{aligned}
\end{equation}
\end{proposition}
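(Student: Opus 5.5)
The plan is to reduce \eqref{eq:3.12} directly to Lemma~\ref{Lemma 2.2.} by a product-rule computation on $M\setminus\{p\}$. The key observation is that the zeroth-order term $\frac{m(m-2)k}{4}$ is exactly the one killed by the Green equation \eqref{eq:2.6}. Write $u=|\nabla s|^2$ and expand
\[
\Delta_f(uG)=G\,\Delta_f u+u\,\Delta_f G+2\langle\nabla u,\nabla G\rangle .
\]
By \eqref{eq:2.6}, $u\,\Delta_f G=\frac{m(m-2)k}{4}uG$, and this cancels the potential term on the left of \eqref{eq:3.12}. Hence
\[
\left(\Delta_f-\tfrac{m(m-2)k}{4}\right)(uG)=G\,\Delta_f u+2\langle\nabla u,\nabla G\rangle .
\]

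Next I express $\nabla G$ through $s$ using \eqref{eq:2.7}. Since $G=(2s)^{2-m}$, we have $\nabla G=(2-m)G\,\nabla s/s=(2-m)G\,\nabla s^2/(2s^2)$. Therefore
\[
2\langle\nabla u,\nabla G\rangle=\frac{(2-m)G}{s^2}\langle\nabla s^2,\nabla u\rangle .
\]
Factoring out $G/(2s^2)$, the right-hand side becomes
\[
\frac{G}{2s^2}\Bigl[\,2s^2\Delta_f u+(4-2m)\langle\nabla s^2,\nabla u\rangle\Bigr].
\]
The bracket is precisely the left-hand side of \eqref{eq:3.7} minus the term $3mk\,s^2u$.

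Applying Lemma~\ref{Lemma 2.2.}, the bracket equals
\[
|\mathring{\Hess}\,s^2|^2+D_m+\Ric_f^m(\nabla s^2,\nabla s^2)+\frac{mk^2}{4}s^4-3mk\,s^2u .
\]
I then split off the curvature lower bound. Since $|\nabla s^2|^2=4s^2u$,
\[
\Ric_f^m(\nabla s^2,\nabla s^2)=\bigl(\Ric_f^m-(m-1)kg\bigr)(\nabla s^2,\nabla s^2)+4(m-1)k\,s^2u .
\]
The $u$-terms then combine to $(4m-4-3m)k\,s^2u=(m-4)k\,s^2u$. Multiplying by $G/(2s^2)$ gives $\frac{k(m-4)}{2}Gu$, and the quartic term gives $\frac{mk^2}{8}s^2G$, which is exactly \eqref{eq:3.12}.

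There is no genuine obstacle: all computations are pointwise on the punctured manifold, where $s>0$ and everything is smooth. The only point requiring care is the bookkeeping of the factor $2$ in $\nabla G$ versus $\nabla s^2$ and the sign of $(2-m)$. The cross term must match the coefficient $(4-2m)$ in \eqref{eq:3.7} exactly, and I would double-check this before assembling the constants.
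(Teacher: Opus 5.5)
Your proposal is correct and matches the paper's proof: expand by the product rule, cancel the potential term with \eqref{eq:2.6}, use $\nabla G=\frac{2-m}{2s^2}G\nabla s^2$, substitute Lemma~\ref{Lemma 2.2.}, and extract $(m-1)k|\nabla s^2|^2$ to get the coefficient $k(m-4)/2$. The only difference is bookkeeping: you factor out $G/(2s^2)$ and match the left side of \eqref{eq:3.7}, whereas the paper substitutes the rearranged form \eqref{eq:3.11} and notes that the cross terms cancel as $(m-2)+(2-m)=0$.
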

\begin{proof}
First,
\begin{equation}\label{eq:3.13}
\nabla G=\frac{2-m}{2s^2}G\nabla s^2.
\end{equation}
Using the product rule and cancelling the potential term in \(|\nabla s|^2\Delta_f G\), we get
\[
\begin{aligned}
&\left(\Delta_f-\frac{m(m-2)k}{4}\right)(|\nabla s|^2G)\\
&=G\Delta_f|\nabla s|^2+2\langle\nabla G,\nabla|\nabla s|^2\rangle\\
&=\frac{G}{2s^2}\left[
|\mathring{\operatorname{Hess}}s^2|^2
+D_m
+\operatorname{Ric}_f^m(\nabla s^2,\nabla s^2)
\right]\\
&\quad+\frac{m-2}{s^2}G\langle\nabla s^2,\nabla|\nabla s|^2\rangle
-\frac{3mk}{2}G|\nabla s|^2+\frac{mk^2}{8}s^2G+\frac{2-m}{s^2}G\langle\nabla s^2,\nabla|\nabla s|^2\rangle\\
&=\frac{G}{2s^2}\left[
|\mathring{\operatorname{Hess}}s^2|^2
+D_m
+\operatorname{Ric}_f^m(\nabla s^2,\nabla s^2)
\right]
-\frac{3mk}{2}G|\nabla s|^2+\frac{mk^2}{8}s^2G.
\end{aligned}
\]
The coefficients of the cross terms are \((m-2)+(2-m)=0\). Extracting \((m-1)k|\nabla s^2|^2\) from the curvature term and using \(|\nabla s^2|^2=4s^2|\nabla s|^2\), the coefficient of \(G|\nabla s|^2\) becomes \(2(m-1)k-3mk/2=k(m-4)/2\), which gives the conclusion.
\end{proof}

\begin{proposition}One has the local identity
\begin{equation}\label{eq:3.14}
\begin{aligned}
&\left(\Delta_f-\frac{m(m-2)k}{4}\right)
\left((4|\nabla s|^2+ks^2)G\right)\\
&=
8(2s)^{-m}\left[
|\mathring{\operatorname{Hess}}s^2|^2
+D_m
+(\operatorname{Ric}_f^m-(m-1)kg)(\nabla s^2,\nabla s^2)
\right]\ge0.
\end{aligned}
\end{equation}
However, when \(m>n\), one cannot deduce that \(4|\nabla s|^2+ks^2\le1\).
\end{proposition}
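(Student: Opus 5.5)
The identity will follow by linearity: we add $4\times$\eqref{eq:3.12} to $k$ times a formula for $\bigl(\Delta_f-\tfrac{m(m-2)k}{4}\bigr)(s^2G)$, and then check that all lower-order terms cancel. The nonnegativity is then immediate from \eqref{eq:2.2} and $D_m\ge0$.

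\textbf{Step 1.} We compute the $s^2G$ term. By the product rule and \eqref{eq:2.6},
\[
\Bigl(\Delta_f-\tfrac{m(m-2)k}{4}\Bigr)(s^2G)=G\Delta_f s^2+2\langle\nabla s^2,\nabla G\rangle .
\]
Into this we substitute \eqref{eq:3.2} and \eqref{eq:3.13}, together with $|\nabla s^2|^2=4s^2|\nabla s|^2$. This gives
\[
G\cdot 2m\Bigl(|\nabla s|^2-\tfrac k4 s^2\Bigr)+\frac{2-m}{s^2}G\cdot 4s^2|\nabla s|^2
=(8-2m)G|\nabla s|^2-\frac{mk}{2}s^2G .
\]

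\textbf{Step 2.} We combine. Four times \eqref{eq:3.12} contributes
\[
\frac{2G}{s^2}[\cdots]+2k(m-4)G|\nabla s|^2+\frac{mk^2}{2}s^2G .
\]
Here $[\cdots]$ denotes the bracket $|\mathring{\operatorname{Hess}}s^2|^2+D_m+(\operatorname{Ric}_f^m-(m-1)kg)(\nabla s^2,\nabla s^2)$.

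Adding $k$ times the result of Step 1:
\begin{itemize}
\item the $G|\nabla s|^2$ coefficients sum to $2k(m-4)+k(8-2m)=0$;
\item the $s^2G$ coefficients sum to $\tfrac{mk^2}{2}-\tfrac{mk^2}{2}=0$.
\end{itemize}
What remains is $\frac{2G}{s^2}[\cdots]$. Since $G=(2s)^{2-m}$,
\[
\frac{2G}{s^2}=\frac{8(2s)^{2-m}}{(2s)^2}=8(2s)^{-m},
\]
which is exactly the claimed coefficient in \eqref{eq:3.14}.

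\textbf{Step 3.} We handle the final remark, which says the maximum-principle conclusion $4|\nabla s|^2+ks^2\le1$ fails. The argument is by pole asymptotics. By \eqref{eq:2.16}, $4|\nabla s|^2+ks^2\asymp\rho^{2\alpha-2}$. Since $\alpha<1$ when $m>n$, this quantity is unbounded near $p$. So no bound by $1$, or by any constant, can hold, and the Manea-type gradient estimate fails. It is also worth noting why the maximum principle argument breaks down: the subsolution $(4|\nabla s|^2+ks^2)G$ has a pole behavior $\rho^{2\alpha-2}\rho^{2-n}$ that is more singular than $G$ itself, so it cannot be compared with a multiple of $G$.

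The only delicate point is bookkeeping of the coefficients in Step 2, namely the exact cancellation of the $G|\nabla s|^2$ terms. This cancellation relies on the specific $(m-1)k$ extraction already built into \eqref{eq:3.12}.
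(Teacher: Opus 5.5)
Your proof is correct and follows the paper's argument essentially line by line. You expand $\bigl(\Delta_f-\tfrac{m(m-2)k}{4}\bigr)(ks^2G)$ to $2k(4-m)G|\nabla s|^2-\tfrac{mk^2}{2}s^2G$, add four times \eqref{eq:3.12} so that both lower-order coefficients cancel, rewrite $2G/s^2=8(2s)^{-m}$, and for the final remark invoke $4|\nabla s|^2+ks^2\asymp\rho^{2\alpha-2}\to+\infty$ from \eqref{eq:2.16}, exactly as the paper does.
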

\begin{proof}
The other product term expands as
\[
\begin{aligned}
\left(\Delta_f-\frac{m(m-2)k}{4}\right)(ks^2G)
&=kG\Delta_f s^2+2k\langle\nabla s^2,\nabla G\rangle\\
&=kG\left(2m|\nabla s|^2-\frac{mk}{2}s^2\right)
+k(2-m)G\frac{|\nabla s^2|^2}{s^2}\\
&=2k(4-m)G|\nabla s|^2-\frac{mk^2}{2}s^2G.
\end{aligned}
\]
Multiplying \eqref{eq:3.12} by \(4\) and adding, the coefficient of \(G|\nabla s|^2\) is \(2k(m-4)+2k(4-m)=0\), and the coefficient of \(s^2G\) is \(mk^2/2-mk^2/2=0\). Also \(2G/s^2=8(2s)^{-m}\), so \eqref{eq:3.14} follows. However, in \eqref{eq:2.16} we have \(2\alpha-2<0\), hence \(4|\nabla s|^2+ks^2\to+\infty\). At the pole the upper-bound control needed for the maximum principle is not available, so one cannot apply the sharp estimate of the original paper.
\end{proof}

\begin{remark} By the curvature assumption, the three terms in the brackets of \eqref{eq:3.14} are all nonnegative. If their sum is zero, then each term is zero, but this does not provide an already established sharp gradient estimate or its global rigidity. In the independent case \(m=n\) with \(f\) constant, one can directly use the ordinary Bochner formula and set \(D_m=0\); one cannot substitute \(m=n\) into the denominator of \eqref{eq:3.4}. In this case, if \(G\sim c_p\rho^{2-n}\), then \(4|\nabla s|^2+ks^2\to c_p^{-2/(n-2)}\), and in particular the limit is \(1\) when \(c_p=1\).
\end{remark}
\begin{proposition} Define the weighted drift operator corresponding to the original paper by
\begin{equation}\label{eq:3.15}
\begin{aligned}
\mathcal L_f u
&=\Delta_f u+\frac{2-m}{s^2}\langle\nabla s^2,\nabla u\rangle\\
&=\Delta_f u+2\langle\nabla\log G,\nabla u\rangle\\
&=G^{-2}\operatorname{div}_f(G^2\nabla u).
\end{aligned}
\end{equation}
Then
\begin{equation}\label{eq:3.16}
\mathcal L_f(4|\nabla s|^2+ks^2)
=
\frac{2}{s^2}\left[
|\mathring{\operatorname{Hess}}s^2|^2
+D_m
+(\operatorname{Ric}_f^m-(m-1)kg)(\nabla s^2,\nabla s^2)
\right].
\end{equation}
\end{proposition}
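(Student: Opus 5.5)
The plan is to derive \eqref{eq:3.16} directly from the local identity \eqref{eq:3.14} by conjugating with $G$. Set $u=4|\nabla s|^2+ks^2$. The first step is to check the three expressions for $\mathcal L_f$ in \eqref{eq:3.15}. By \eqref{eq:3.13} we have $\nabla\log G=\frac{2-m}{2s^2}\nabla s^2$, so $2\langle\nabla\log G,\nabla u\rangle=\frac{2-m}{s^2}\langle\nabla s^2,\nabla u\rangle$. Expanding the divergence gives
\[
G^{-2}\operatorname{div}_f(G^2\nabla u)=\Delta_f u+2G^{-1}\langle\nabla G,\nabla u\rangle,
\]
so all three forms agree on $M\setminus\{p\}$.

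The key step is the product rule for $\Delta_f(uG)$ combined with the equation \eqref{eq:2.6}:
\[
\Bigl(\Delta_f-\tfrac{m(m-2)k}{4}\Bigr)(uG)=G\Delta_f u+2\langle\nabla G,\nabla u\rangle+u\Bigl(\Delta_f G-\tfrac{m(m-2)k}{4}G\Bigr)=G\,\mathcal L_f u .
\]
The potential term cancels exactly because $G$ solves the homogeneous equation away from the pole. Hence $\mathcal L_f u=G^{-1}\bigl(\Delta_f-\frac{m(m-2)k}{4}\bigr)(uG)$.

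Next I would substitute \eqref{eq:3.14}. This gives $\mathcal L_f u=8(2s)^{-m}G^{-1}[\cdots]$, and with $G=(2s)^{2-m}$ we get $8(2s)^{-m}(2s)^{m-2}=8/(4s^2)=2/s^2$, which is \eqref{eq:3.16}.

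There is no genuine obstacle here. The only points needing care are that every identity is used on $M\setminus\{p\}$, where $s>0$ and $G$ is smooth, and that the constant bookkeeping $2G/s^2=8(2s)^{-m}$ from the previous proof is applied consistently. The final sign statement then follows from the curvature assumption, as noted in the remark after \eqref{eq:3.14}. No pole integrability is needed because the identity is pointwise on the punctured manifold.
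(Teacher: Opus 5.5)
Your argument is correct: the conjugation identity $\bigl(\Delta_f-\tfrac{m(m-2)k}{4}\bigr)(Gu)=G\mathcal L_f u$, combined with \eqref{eq:3.14} and $G^{-1}\cdot 8(2s)^{-m}=2/s^2$, is exactly the independent verification the paper records at the end of its proof. The paper's primary argument instead applies Lemma~\ref{Lemma 2.2.} directly to $\mathcal L_f|\nabla s|^2$ and $\mathcal L_f s^2$ and forms $4\mathcal L_f|\nabla s|^2+k\,\mathcal L_f s^2$; this is the same bookkeeping as in the proof of \eqref{eq:3.14}, only without the factor $G$.
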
 
\begin{proof}
Expanding the divergence form gives
\[
G^{-2}\operatorname{div}_f(G^2\nabla u)
=\Delta_f u+2G^{-1}\langle\nabla G,\nabla u\rangle,
\]
and using \eqref{eq:3.13} gives \eqref{eq:3.15}. By Lemma \ref{Lemma 2.2.}, after adding the drift term to \(\Delta_f|\nabla s|^2\), the coefficients \((m-2)+(2-m)\) cancel, leaving
\[
\begin{aligned}
\mathcal L_f|\nabla s|^2
&=\frac{1}{2s^2}\left[
|\mathring{\operatorname{Hess}}s^2|^2
+D_m
+(\operatorname{Ric}_f^m-(m-1)kg)(\nabla s^2,\nabla s^2)
\right]+\frac{(m-4)k}{2}|\nabla s|^2+\frac{mk^2}{8}s^2.
\end{aligned}
\]
On the other hand,
\[
\begin{aligned}
\mathcal L_f s^2
=2m|\nabla s|^2-\frac{mk}{2}s^2+\frac{2-m}{s^2}|\nabla s^2|^2
=2(4-m)|\nabla s|^2-\frac{mk}{2}s^2.
\end{aligned}
\]
Multiplying the former by \(4\) and the latter by \(k\) and adding, the two remaining coefficients are \(2(m-4)k+2(4-m)k=0\) and \(mk^2/2-mk^2/2=0\), which gives \eqref{eq:3.16}. One can also verify independently from
\[
\left(\Delta_f-\frac{m(m-2)k}{4}\right)(Gu)=G\mathcal L_f u.
\]
Here one must use \(\mathcal L_f\), and cannot mistakenly write the left-hand side as \(\Delta_f(4|\nabla s|^2+ks^2)\).
\end{proof}

\section{Three monotonicity formulas}\label{sec:basic}

\subsection{The three integral quantities}

If \(u\in C^2(M\setminus\{p\})\) and \(uG\) is absolutely integrable near the pole with respect to \(e^{-f}dV\), define
\begin{equation}\label{eq:4.1}
\begin{aligned}
I_{f,u}(r)
:={}&
(2\operatorname{sn}_k(r/2))^{1-m}\operatorname{cs}_k(r/2)
\int_{b_f=r}u|\nabla b_f|e^{-f}d\sigma+\frac{mk}{4}\int_{b_f<r}uG e^{-f}dV.
\end{aligned}
\end{equation}
Using
\[
\nabla G=(2-m)(2s)^{1-m}\operatorname{cs}_k(b_f/2)\nabla b_f,
\]
equivalently,
\begin{equation}\label{eq:4.2}
I_{f,u}(r)
=
\frac{1}{m-2}\int_{b_f=r}u|\nabla G|e^{-f}d\sigma
+\frac{mk}{4}\int_{b_f<r}uG e^{-f}dV.
\end{equation}
If \(u(4|\nabla s|^2-ks^2)\) is also absolutely integrable, define
\begin{equation}\label{eq:4.3}
\begin{aligned}
J_{f,u}(r)
:={}&
(2\operatorname{sn}_k(r/2))^{-m}
\int_{b_f<r}u(4|\nabla s|^2-ks^2)e^{-f}dV+\frac{k}{4}\int_{b_f<r}uG e^{-f}dV.
\end{aligned}
\end{equation}
The above curvature-correcting volume integral cannot be omitted. Taking \(u=4|\nabla s|^2+ks^2\), we obtain respectively
\[
A_f=I_{f,4|\nabla s|^2+ks^2},\qquad
V_f=J_{f,4|\nabla s|^2+ks^2}.
\]
The latter is used only when its integral converges. 
All differentiations are first performed at regular values and then understood as almost-everywhere identities via locally absolutely continuous representatives. The reason is as follows: in a compact environment away from the pole, the flux difference is written as a volume integral by the weighted divergence formula. At critical points of \(s\), \eqref{eq:3.1} gives \(\Delta_f s=-mks/4\ne0\), so the critical set has measure zero; otherwise, at an almost-everywhere density point of the zero set of \(\nabla s\), one would have \(\operatorname{Hess}s=0\), contradicting this identity. By the coarea formula, the flux and the volume integral are locally absolutely continuous in the parameter. Hence the sign of the derivative indeed implies monotonicity over the entire parameter interval.

\begin{lemma}\label{Lemma3.1}
 Under the above integrability conditions,
\begin{equation}\label{eq:4.6}
I_{f,u}'(r)
=
(2\operatorname{sn}_k(r/2))^{1-m}\operatorname{cs}_k(r/2)
\int_{b_f=r}\langle\nabla u,\nu\rangle e^{-f}d\sigma.
\end{equation}
If in addition \(\Delta_f u\) is absolutely integrable and the following limit exists, then
\begin{equation}\label{eq:4.7}
\begin{aligned}
I_{f,u}'(r)
=
(2\operatorname{sn}_k(r/2))^{1-m}\operatorname{cs}_k(r/2)\left[
\int_{b_f<r}\Delta_f u\,e^{-f}dV
+\lim_{\epsilon\downarrow0}\int_{\rho=\epsilon}\langle\nabla u,\nabla\rho\rangle e^{-f}d\sigma
\right].
\end{aligned}
\end{equation}
Only when this limit is zero does one obtain a formula without the inner flux term.
\end{lemma}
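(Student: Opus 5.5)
The plan is to work with the equivalent form \eqref{eq:4.2} of $I_{f,u}$ and to compare two regular values $r_1<r_2$ in the admissible range \eqref{eq:2.10}. Let $U=\{r_1<b_f<r_2\}$, which is a compact region away from the pole. On its boundary components the outward normal is $\nu=\nabla s/|\nabla s|$ at $\{b_f=r_2\}$ and $-\nu$ at $\{b_f=r_1\}$. Since $G=(2s)^{2-m}$ is decreasing in $s$, we have $|\nabla G|=-\langle\nabla G,\nu\rangle$ on level sets. Hence the flux part of $I_{f,u}$ equals
\[
F(r):=-\frac{1}{m-2}\int_{b_f=r}u\langle\nabla G,\nu\rangle e^{-f}d\sigma .
\]

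Applying the weighted divergence theorem \eqref{eq:2.3} to $Z=u\nabla G$ on $U$ and using \eqref{eq:2.6}, I get
\[
F(r_2)-F(r_1)=-\frac1{m-2}\int_U\Bigl(\langle\nabla u,\nabla G\rangle+\frac{m(m-2)k}{4}uG\Bigr)e^{-f}dV .
\]
The second term is exactly $-\frac{mk}{4}\int_U uG\,e^{-f}dV$. It cancels the increment of the volume term $\frac{mk}{4}\int_{b_f<r}uG\,e^{-f}dV$ between $r_1$ and $r_2$.

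For the remaining term I insert $\nabla G=(2-m)(2s)^{1-m}\operatorname{cs}_k(b_f/2)\nabla b_f$. This rewrites it as $\int_U(2s)^{1-m}\operatorname{cs}_k(b_f/2)\langle\nabla u,\nabla b_f\rangle e^{-f}dV$. I then apply the coarea formula in $b_f$; on $\{b_f=t\}$ one has $s=\operatorname{sn}_k(t/2)$ and $\langle\nabla u,\nabla b_f\rangle/|\nabla b_f|=\langle\nabla u,\nu\rangle$. This gives
\[
I_{f,u}(r_2)-I_{f,u}(r_1)=\int_{r_1}^{r_2}(2\operatorname{sn}_k(t/2))^{1-m}\operatorname{cs}_k(t/2)\int_{b_f=t}\langle\nabla u,\nu\rangle e^{-f}d\sigma\,dt .
\]

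The critical set of $s$ has measure zero, as shown above via \eqref{eq:3.1}. Together with the coarea formula, this makes the integrand locally integrable in $t$ and $I_{f,u}$ locally absolutely continuous. Therefore \eqref{eq:4.6} holds at a.e. $r$, and at every regular value where the inner integral is continuous.

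For \eqref{eq:4.7}, I fix a regular $r$ and take $\epsilon$ small enough that the geodesic ball $B_\epsilon(p)\subset\{b_f<r\}$; this is possible because $s\to0$ at $p$ by \eqref{eq:2.15}. I apply \eqref{eq:2.3} to $\nabla u$ on $\{b_f<r\}\setminus B_\epsilon(p)$. The outward normal on the inner sphere is $-\nabla\rho$, so
\[
\int_{b_f=r}\langle\nabla u,\nu\rangle e^{-f}d\sigma=\int_{\{b_f<r\}\setminus B_\epsilon}\Delta_f u\,e^{-f}dV+\int_{\rho=\epsilon}\langle\nabla u,\nabla\rho\rangle e^{-f}d\sigma .
\]
Letting $\epsilon\downarrow0$, the volume integral converges to $\int_{b_f<r}\Delta_fu\,e^{-f}dV$ by the assumed absolute integrability and dominated convergence. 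Hence the sphere term has a limit, and this limit equals the one postulated in the statement, which proves \eqref{eq:4.7}. The final remark of the lemma is then immediate.

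The step I expect to need the most care is the regularity bookkeeping, not the algebra. The level sets $\{b_f=t\}$ may meet critical points of $s$, so the divergence theorem is applied only between regular values. The extension to all $r$ then goes through absolute continuity and the coarea formula, as outlined before the lemma. I would also keep the region $U$ away from the pole, so that no integrability of $u$ beyond the stated hypotheses is ever needed in the first part.
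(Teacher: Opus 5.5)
Your proposal is correct and follows essentially the same route as the paper. The paper differentiates the flux term directly by the divergence and coarea formulas, with the $u\Delta_f G$ contribution cancelling the derivative of the volume correction; your annulus computation between two regular values is the same argument in integrated form, and your inner-ball argument for \eqref{eq:4.7} is identical. Your integrated version makes the local absolute continuity explicit, and it also shows that the existence of the inner-flux limit already follows from the absolute integrability of $\Delta_f u$, so that hypothesis is redundant.
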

\begin{proof}
Write the boundary term in \eqref{eq:4.2} as
\[
-(m-2)^{-1}\int_{b_f=r}u\langle\nabla G,\nu\rangle e^{-f}d\sigma.
\]
The divergence formula and the coarea formula give
\[
\begin{aligned}
&\frac{d}{dr}\left(\frac{1}{m-2}\int_{b_f=r}u|\nabla G|e^{-f}d\sigma\right)\\
&=\frac{1}{m-2}\int_{b_f=r}
\frac{-u\Delta_f G-\langle\nabla u,\nabla G\rangle}{|\nabla b_f|}
e^{-f}d\sigma\\
&=-\frac{mk}{4}\int_{b_f=r}\frac{uG}{|\nabla b_f|}e^{-f}d\sigma
+(2\operatorname{sn}_k(r/2))^{1-m}\operatorname{cs}_k(r/2)
\int_{b_f=r}\langle\nabla u,\nu\rangle e^{-f}d\sigma.
\end{aligned}
\]
The derivative of the volume integral is exactly the negative of the first term, which yields \eqref{eq:4.6}. On the other hand,
\[
\int_{\{b_f<r\}\setminus B_\epsilon(p)}\Delta_f u\,e^{-f}dV
=
\int_{b_f=r}\langle\nabla u,\nu\rangle e^{-f}d\sigma
-\int_{\rho=\epsilon}\langle\nabla u,\nabla\rho\rangle e^{-f}d\sigma.
\]
Taking the limit gives \eqref{eq:4.7}. For example, \(|\nabla u|=O(\rho^{-\gamma})\) with \(\gamma<n-1\) is sufficient to guarantee that the inner flux tends to zero, but the integrability of \(\Delta_f u\) must still be checked.
\end{proof}

\subsection{Pole flux for continuous functions: supplementary argument}

If \(u\) is continuous at \(p\), then by the flux normalization of the Green function and the integrability of \(uG\),
\begin{equation}\label{eq:4.8}
\lim_{r\downarrow0}I_{f,u}(r)=\frac{\kappa}{m-2}u(p).
\end{equation}
More precisely, the boundary term can be written as
\[
\frac{u(p)}{m-2}\int_{b_f=r}|\nabla G|e^{-f}d\sigma
+\frac{1}{m-2}\int_{b_f=r}(u-u(p))|\nabla G|e^{-f}d\sigma.
\]
The first term tends to \(\kappa u(p)/(m-2)\), and the absolute value of the second term is bounded above by
\[
\sup_{b_f=r}|u-u(p)|\int_{b_f=r}|\nabla G|e^{-f}d\sigma/(m-2),
\]
so it tends to zero. The volume integral term also tends to zero.

If in addition \(\Delta_f u\in L^1(e^{-f}dV)\), then one can prove that the inner flux in Lemma \ref{Lemma3.1} is zero without first assuming a pointwise upper bound on the gradient growth. By the derivative asymptotics of \(s\), all sufficiently small positive level values are regular. Let \(a=2\operatorname{sn}_k(r/2)\), and temporarily regard \(I_{f,u}\) as a function of \(a\). A function \(F\) that has a finite limit at zero and is differentiable on the positive half-line admits a sequence \(a_i\downarrow0\) such that \(a_iF'(a_i)\to0\). Indeed, for any \(\delta_i\downarrow0\), applying the mean value theorem on \([\delta_i/2,\delta_i]\) gives some \(a_i\) such that
\[
|a_iF'(a_i)|\le 2|F(\delta_i)-F(\delta_i/2)|\to0.
\]
By \eqref{eq:4.6} and \(da/dr=\operatorname{cs}_k(r/2)\),
\[
\int_{2s=a_i}\langle\nabla u,\nu\rangle e^{-f}d\sigma
=
a_i^{m-1}\frac{dI_{f,u}}{da}(a_i)
=
a_i^{m-2}\left(a_i\frac{dI_{f,u}}{da}(a_i)\right)\longrightarrow0.
\]
Using the divergence formula on \(\{a_i<2s<2\operatorname{sn}_k(r/2)\}\) and then taking the limit by the absolute integrability of \(\Delta_f u\), we obtain
\[
\int_{b_f=r}\langle\nabla u,\nu\rangle e^{-f}d\sigma
=
\int_{b_f<r}\Delta_f u\,e^{-f}dV.
\]
Applying the divergence formula further on the region obtained by removing a small geodesic ball, we also find that the flux inside the geodesic ball tends to zero. Here continuity provides the finite initial value, while absolute integrability ensures that the limit is independent of the cutoff sequence; their roles are different. For the specific functions in Sections 3 and 4 that diverge at \(p\), we instead check the flux using their explicit asymptotic orders, and do not apply this continuity argument.

\begin{lemma}\label{Lemma 3.2} Under the conditions of \eqref{eq:4.1},
\begin{equation}\label{eq:4.9}
\begin{aligned}
&\frac{d}{dr}\left((2\operatorname{sn}_k(r/2))^{2-m}I_{f,u}(r)\right)\\
&=
(2\operatorname{sn}_k(r/2))^{1-m}\operatorname{cs}_k(r/2)
\left[
\int_{b_f=r}\langle\nabla(uG),\nu\rangle e^{-f}d\sigma
-\frac{m(m-2)k}{4}\int_{b_f<r}uG e^{-f}dV
\right].
\end{aligned}
\end{equation}
\end{lemma}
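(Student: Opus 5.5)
The plan is to derive \eqref{eq:4.9} directly from the first formula \eqref{eq:4.6} of Lemma \ref{Lemma3.1}, the product rule, and the representation \eqref{eq:4.2} of $I_{f,u}$. No new divergence argument is needed. Throughout I write $a=2\operatorname{sn}_k(r/2)$, so that $da/dr=\operatorname{cs}_k(r/2)$. By \eqref{eq:2.7}, $G\equiv a^{2-m}$ on the level set $\{b_f=r\}$. As in the preceding discussion, all identities are first established at regular values $r$ and then read almost everywhere through the locally absolutely continuous representatives.

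\emph{Step 1 (left-hand side).} Differentiate the product. Using \eqref{eq:4.6},
\[
\frac{d}{dr}\bigl(a^{2-m}I_{f,u}\bigr)
=(2-m)a^{1-m}\operatorname{cs}_k(r/2)\,I_{f,u}(r)
+a^{2-m}\,a^{1-m}\operatorname{cs}_k(r/2)\int_{b_f=r}\langle\nabla u,\nu\rangle e^{-f}d\sigma .
\]

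\emph{Step 2 (rewrite the flux on the right).} On the level set I expand $\nabla(uG)=G\nabla u+u\nabla G$. Since $G=a^{2-m}$ there, and $\nabla G$ points opposite to $\nu=\nabla s/|\nabla s|$ by \eqref{eq:3.13}, this gives
\[
\int_{b_f=r}\langle\nabla(uG),\nu\rangle e^{-f}d\sigma
=a^{2-m}\int_{b_f=r}\langle\nabla u,\nu\rangle e^{-f}d\sigma-\int_{b_f=r}u|\nabla G|e^{-f}d\sigma .
\]
By \eqref{eq:4.2}, the last integral equals
\[
(m-2)\Bigl(I_{f,u}(r)-\tfrac{mk}{4}\int_{b_f<r}uG\,e^{-f}dV\Bigr).
\]

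\emph{Step 3 (cancellation).} Substituting Step 2 into the bracket on the right of \eqref{eq:4.9}, the term $(m-2)\frac{mk}{4}\int_{b_f<r}uG\,e^{-f}dV$ cancels exactly against $-\frac{m(m-2)k}{4}\int_{b_f<r}uG\,e^{-f}dV$. The bracket therefore reduces to
\[
a^{2-m}\int_{b_f=r}\langle\nabla u,\nu\rangle e^{-f}d\sigma-(m-2)I_{f,u}(r).
\]
Multiplying by $a^{1-m}\operatorname{cs}_k(r/2)$ reproduces the expression from Step 1 term by term, which proves \eqref{eq:4.9}.

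The only delicate point I anticipate is bookkeeping rather than analysis. One must fix the orientation of $\nu$ as outward from $\{b_f<r\}$, which gives $\langle\nabla G,\nu\rangle=-|\nabla G|$. One must also check that the integrability hypotheses of \eqref{eq:4.1} are all that \eqref{eq:4.6} requires. Both are already contained in Lemma \ref{Lemma3.1}, so I expect no genuine obstacle. In particular, no pole-flux limit enters, because only the level-set form \eqref{eq:4.6} is used and not \eqref{eq:4.7}.
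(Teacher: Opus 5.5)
Your proposal is correct and is essentially the paper's proof. The paper also applies the product rule and \eqref{eq:4.6} to the left-hand side. It then uses \eqref{eq:4.2}, together with $\langle\nabla G,\nu\rangle=-|\nabla G|$ and $G=(2\operatorname{sn}_k(r/2))^{2-m}$ on the level set, to merge the boundary terms into $\langle\nabla(uG),\nu\rangle$ and to match the volume coefficient $(2-m)mk/4=-m(m-2)k/4$. The only difference is that you run the algebra from the right-hand side back to the left, which is immaterial, and you rightly note that no pole flux enters.
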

\begin{proof}
The product rule and \eqref{eq:4.6} turn the left-hand side into
\[
\begin{aligned}
&(2\operatorname{sn}_k(r/2))^{1-m}\operatorname{cs}_k(r/2)\left[
(2-m)I_{f,u}(r)
+(2\operatorname{sn}_k(r/2))^{2-m}
\int_{b_f=r}\langle\nabla u,\nu\rangle e^{-f}d\sigma
\right].
\end{aligned}
\]
Substituting \eqref{eq:4.2} into the bracket, using
\[
\langle\nabla G,\nu\rangle=-|\nabla G|,\qquad
G|_{b_f=r}=(2\operatorname{sn}_k(r/2))^{2-m},
\]
the boundary terms combine as
\[
u\langle\nabla G,\nu\rangle+G\langle\nabla u,\nu\rangle
=\langle\nabla(uG),\nu\rangle,
\]
and the volume integral coefficient is \((2-m)mk/4=-m(m-2)k/4\), which gives the conclusion. This step does not use the divergence formula for \(uG\) at the pole.
\end{proof}

\begin{remark}\label{remark3.3}
For all \(m>n\ge3\), \(A_f(r)\) is finite for every admissible positive \(r\), but
\begin{equation}\label{eq:4.10}
A_f(r)\asymp
(2\operatorname{sn}_k(r/2))^{-2(m-n)/(n-2)}
\longrightarrow+\infty.
\end{equation}
\end{remark}
\begin{proof}
By \eqref{eq:2.11} and \eqref{eq:2.16}, the radial order of
\((4|\nabla s|^2+ks^2)G e^{-f}dV\) is \(\rho^{2\alpha-1}d\rho\), so it is integrable and its small-ball integral is \(O(\rho^{2\alpha})\). On the boundary,
\[
\operatorname{cs}_k(r/2)|\nabla b_f|=2|\nabla s|\asymp\rho^{\alpha-1},
\]
so the boundary term of \(A_f\) has order
\[
\rho^{\alpha(1-m)}\rho^{2\alpha-2}\rho^{\alpha-1}\rho^{n-1}
=
\rho^{2\alpha-2},
\]
where we used \(n-2=\alpha(m-2)\). Since \(2\operatorname{sn}_k(r/2)\asymp\rho^\alpha\), we obtain \eqref{eq:4.10}. The leading coefficients are positive, so this two-sided order estimate indeed gives divergence.
\end{proof}

\begin{theorem}\label{Theorem 3.4}For regular values \(r_1<r_2\),
\begin{equation}\label{eq:4.11}
\begin{aligned}
&\left[
\frac{(2\operatorname{sn}_k(r/2))^{m-1}}{\operatorname{cs}_k(r/2)}
\frac{d}{dr}\left((2\operatorname{sn}_k(r/2))^{2-m}A_f(r)\right)
\right]_{r=r_1}^{r=r_2}\\
&=
8\int_{r_1<b_f<r_2}
(2s)^{-m}\left[
|\mathring{\operatorname{Hess}}s^2|^2
+D_m
+(\operatorname{Ric}_f^m-(m-1)kg)(\nabla s^2,\nabla s^2)
\right]e^{-f}dV.
\end{aligned}
\end{equation}
The right-hand side is nonnegative; this formula does not claim that
\((2\operatorname{sn}_k(r/2))^{2-m}(A_f-C_0)\) is nondecreasing.
\end{theorem}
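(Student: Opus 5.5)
Apply Lemma~\ref{Lemma 3.2} with \(u=4|\nabla s|^2+ks^2\). The bracket there is multiplied by \((2\operatorname{sn}_k(r/2))^{1-m}\operatorname{cs}_k(r/2)\), so the quantity inside the square brackets in \eqref{eq:4.11} equals
\[
\Phi(r):=\int_{b_f=r}\langle\nabla(uG),\nu\rangle e^{-f}d\sigma-\frac{m(m-2)k}{4}\int_{b_f<r}uGe^{-f}dV .
\]
This is valid at regular values. The integrability of \(uG\) near the pole is given by Remark~\ref{remark3.3}, where the radial order is \(\rho^{2\alpha-1}d\rho\).

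The claim is therefore \(\Phi(r_2)-\Phi(r_1)=8\int_{r_1<b_f<r_2}(2s)^{-m}[\cdots]e^{-f}dV\). The region \(\{r_1<b_f<r_2\}=\{\operatorname{sn}_k(r_1/2)<s<\operatorname{sn}_k(r_2/2)\}\) is a compact subset of \(M\setminus\{p\}\) with smooth boundary, because \(r_1,r_2\) are regular values. Apply the weighted divergence formula \eqref{eq:2.3} there to \(Z=\nabla(uG)\). The outward normal on \(\{b_f=r_2\}\) is \(\nu\), and on \(\{b_f=r_1\}\) it is \(-\nu\). This gives
\[
\int_{b_f=r_2}\langle\nabla(uG),\nu\rangle e^{-f}d\sigma-\int_{b_f=r_1}\langle\nabla(uG),\nu\rangle e^{-f}d\sigma=\int_{r_1<b_f<r_2}\Delta_f(uG)\,e^{-f}dV .
\]
The volume terms of \(\Phi\) differ by \(-\frac{m(m-2)k}{4}\int_{r_1<b_f<r_2}uGe^{-f}dV\). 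Combining the two,
\[
\Phi(r_2)-\Phi(r_1)=\int_{r_1<b_f<r_2}\Bigl(\Delta_f-\frac{m(m-2)k}{4}\Bigr)(uG)\,e^{-f}dV .
\]
By the local identity \eqref{eq:3.14}, the integrand is exactly \(8(2s)^{-m}[\,|\mathring{\operatorname{Hess}}s^2|^2+D_m+(\operatorname{Ric}_f^m-(m-1)kg)(\nabla s^2,\nabla s^2)\,]\). This gives \eqref{eq:4.11}. Nonnegativity follows from \(D_m\ge0\), the square, and the curvature hypothesis \eqref{eq:2.2}.

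No step here is really hard; the point needing care is to avoid any pole contribution. I would therefore work entirely on the annular region between two regular level sets, so that neither the divergence formula for \(uG\) at \(p\) nor any limit \(r_1\downarrow0\) is used. This is exactly why the statement is given as a difference and not as monotonicity of \((2\operatorname{sn}_k(r/2))^{2-m}(A_f-C_0)\). The only points to check are these:
\begin{itemize}
\item Lemma~\ref{Lemma 3.2} holds at the regular values \(r_1,r_2\), using the a.e./absolute-continuity discussion before Lemma~\ref{Lemma3.1}.
\item Near regular level sets the function \(uG\) is smooth, since \(s>0\) there, so \(\nabla(uG)\) is a legitimate \(C^1\) vector field for \eqref{eq:2.3}.
\end{itemize}
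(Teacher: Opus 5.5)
Your proposal is correct and follows the paper's proof: take $u=4|\nabla s|^2+ks^2$ in Lemma~\ref{Lemma 3.2}, subtract at two regular values, and convert the flux difference with the divergence formula on the annulus $\{r_1<b_f<r_2\}$, which avoids the pole. Substituting \eqref{eq:3.14} then gives \eqref{eq:4.11}, and, as you note, $r_1\downarrow0$ is deliberately never taken.
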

\begin{proof}
Take \(u=4|\nabla s|^2+ks^2\) in Lemma \ref{Lemma 3.2} and subtract the identities at two regular level values. The boundary flux difference minus the potential volume integral equals the annular integral
\[
\int_{r_1<b_f<r_2}
\left(\Delta_f-\frac{m(m-2)k}{4}\right)
\left((4|\nabla s|^2+ks^2)G\right)e^{-f}dV.
\]
The annulus is away from the pole, so the divergence formula is legitimate, and substituting \eqref{eq:3.14} gives \eqref{eq:4.11}. We do not take \(r_1\) to zero, so no unverified finite inner flux assumption is included.
\end{proof}

\begin{proposition}\label{Proposition 3.5.} If \(u\in C^2(M\setminus\{p\})\) and \(uG\) is absolutely integrable, then
\begin{equation}\label{eq:4.12}
\frac{(2\operatorname{sn}_k(r/2))^{3-m}}{\operatorname{cs}_k(r/2)}
I_{f,u}'(r)
=
-\int_{s>\operatorname{sn}_k(r/2)}G^2\mathcal L_f u\,e^{-f}dV.
\end{equation}
No continuity of \(u\) at the pole is required.
\end{proposition}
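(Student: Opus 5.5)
Starting from \eqref{eq:4.6}, we rewrite the level-set flux of \(u\) as a flux of the vector field \(G^2\nabla u\), and then apply the divergence formula \eqref{eq:2.3} on the \emph{exterior} region \(\{s>\operatorname{sn}_k(r/2)\}\), which is compact, avoids the pole, and has no boundary other than the level set. This avoids any pole flux, which is why continuity of \(u\) at \(p\) is not needed.

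Concretely, on \(\{b_f=r\}\) we have \(G=(2\operatorname{sn}_k(r/2))^{2-m}\), so \(G^2=(2\operatorname{sn}_k(r/2))^{4-2m}\) is constant there. Hence
\[
\int_{b_f=r}\langle\nabla u,\nu\rangle e^{-f}d\sigma
=(2\operatorname{sn}_k(r/2))^{2m-4}\int_{b_f=r}\langle G^2\nabla u,\nu\rangle e^{-f}d\sigma .
\]
Multiplying \eqref{eq:4.6} by \((2\operatorname{sn}_k(r/2))^{3-m}/\operatorname{cs}_k(r/2)\) gives
\[
\frac{(2\operatorname{sn}_k(r/2))^{3-m}}{\operatorname{cs}_k(r/2)}I_{f,u}'(r)
=\int_{b_f=r}\langle G^2\nabla u,\nu\rangle e^{-f}d\sigma,
\]
because the powers combine as \((3-m)+(1-m)+(2m-4)=0\).

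Here \(\nu=\nabla s/|\nabla s|\) points into \(\{s>\operatorname{sn}_k(r/2)\}\). The outward normal of the exterior region is therefore \(-\nu\). Applying \eqref{eq:2.3} on the closed manifold to \(Z=G^2\nabla u\) over the exterior region gives
\[
\int_{s>\operatorname{sn}_k(r/2)}\operatorname{div}_f(G^2\nabla u)e^{-f}dV=-\int_{b_f=r}\langle G^2\nabla u,\nu\rangle e^{-f}d\sigma .
\]
Since \(\operatorname{div}_f(G^2\nabla u)=G^2\mathcal L_f u\) by \eqref{eq:3.15}, this yields \eqref{eq:4.12}.

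The main obstacle is justifying the divergence theorem on the exterior region and the use of \eqref{eq:4.6}. For regular values \(r\), the level set is a smooth closed hypersurface in \(M\setminus\{p\}\), and \(u\in C^2\) on a neighborhood of the closure of the exterior region. The exterior region also contains the maximum set of \(s\), where \(\nabla s\) may vanish, but this causes no difficulty because \(u\) and \(G\) remain smooth there. The identity is first obtained at regular values. It then extends almost everywhere through the locally absolutely continuous representatives described before Lemma~\ref{Lemma3.1}, since the critical values have measure zero. The integrability of \(uG\) is used only to make \(I_{f,u}\) well defined, which is needed for \eqref{eq:4.6}.
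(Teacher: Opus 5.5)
Your proposal is correct and is essentially the paper's own proof: you use that $G^2=(2\operatorname{sn}_k(r/2))^{4-2m}$ is constant on the level set to rewrite the flux in \eqref{eq:4.6}, then apply the weighted divergence formula to $G^2\nabla u$ on the pole-free exterior region with outward normal $-\nu$, together with $\operatorname{div}_f(G^2\nabla u)=G^2\mathcal L_f u$. Your remarks on regular values and the almost-everywhere extension agree with the conventions the paper states before Lemma~\ref{Lemma3.1}.
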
 
\begin{proof}
The exterior region is compact and away from the pole, and its outward normal is \(-\nu\). By \eqref{eq:3.15} and the divergence formula,
\[
\begin{aligned}
&\int_{s>\operatorname{sn}_k(r/2)}G^2\mathcal L_f u\,e^{-f}dV\\
&=\int_{s>\operatorname{sn}_k(r/2)}\operatorname{div}_f(G^2\nabla u)e^{-f}dV\\
&=-(2\operatorname{sn}_k(r/2))^{4-2m}
\int_{b_f=r}\langle\nabla u,\nu\rangle e^{-f}d\sigma\\
&=-\frac{(2\operatorname{sn}_k(r/2))^{3-m}}{\operatorname{cs}_k(r/2)}I_{f,u}'(r).
\end{aligned}
\]
This proof does not pass through the pole and does not need to assert that
\(\int_M\Delta_f u\,e^{-f}dV=0\).
\end{proof}

\subsection{Three monotonicity formulas}

\begin{theorem} \label{Theorem 3.6.} For all \(m>n\ge3\), \(A_f\) is locally absolutely continuous and satisfies
\begin{equation}\label{eq:4.13}
\begin{aligned}
&\frac{(2\operatorname{sn}_k(r/2))^{3-m}}{\operatorname{cs}_k(r/2)}A_f'(r)\\
&=
-8\int_{s>\operatorname{sn}_k(r/2)}
(2s)^{2-2m}\left[
|\mathring{\operatorname{Hess}}s^2|^2
+D_m
+(\operatorname{Ric}_f^m-(m-1)kg)(\nabla s^2,\nabla s^2)
\right]e^{-f}dV\le0.
\end{aligned}
\end{equation}
Therefore \(A_f\) is nonincreasing, but its limit at zero is \(+\infty\).
\end{theorem}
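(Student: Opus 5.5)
Apply Proposition~\ref{Proposition 3.5.} with $u=4|\nabla s|^2+ks^2$, then substitute the Bochner identity \eqref{eq:3.16} for $\mathcal L_f u$. The only hypothesis of Proposition~\ref{Proposition 3.5.} is the absolute integrability of $uG$ near $p$. By \eqref{eq:2.11} and \eqref{eq:2.16}, $uG\,e^{-f}dV$ has radial order $\rho^{2\alpha-2}\rho^{2-n}\rho^{n-1}d\rho=\rho^{2\alpha-1}d\rho$, which is integrable because $\alpha>0$; this is the computation already recorded in Remark~\ref{remark3.3}. Thus $A_f=I_{f,u}$ is finite for every admissible $r$, and \eqref{eq:4.12} applies:
\[
\frac{(2\operatorname{sn}_k(r/2))^{3-m}}{\operatorname{cs}_k(r/2)}A_f'(r)=-\int_{s>\operatorname{sn}_k(r/2)}G^2\mathcal L_f(4|\nabla s|^2+ks^2)\,e^{-f}dV .
\]
Since the exterior region $\{s>\operatorname{sn}_k(r/2)\}$ is compact and stays away from the pole, no pole analysis is needed at this step.

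Next substitute \eqref{eq:3.16}. We have $G^2\cdot\frac{2}{s^2}=2(2s)^{4-2m}s^{-2}=8(2s)^{2-2m}$, so the integrand becomes $8(2s)^{2-2m}$ times the bracket $|\mathring{\operatorname{Hess}}s^2|^2+D_m+(\operatorname{Ric}_f^m-(m-1)kg)(\nabla s^2,\nabla s^2)$. This gives \eqref{eq:4.13}. Each term in the bracket is nonnegative: the first is a square, $D_m\ge0$ because $m>n$, and the third is nonnegative by \eqref{eq:2.2}. Since $\operatorname{cs}_k(r/2)>0$ on the admissible range \eqref{eq:2.10}, we conclude $A_f'\le0$.

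Local absolute continuity is handled by the discussion following \eqref{eq:4.3}. The critical set of $s$ has measure zero because $\Delta_f s=-mks/4\neq0$ there. Away from the pole, the coarea formula together with the divergence theorem shows that both the flux term and the sublevel volume term of $A_f$ are locally absolutely continuous in $r$. The identity is therefore first established at regular values and then holds almost everywhere, so the a.e.\ sign $A_f'\le0$ integrates to give that $A_f$ is nonincreasing on the whole admissible interval. The main point requiring care is exactly this passage from regular values to all $r$: one must check that the exterior integral on the right-hand side is continuous in $r$, which holds because the integrand is smooth on $M\setminus\{p\}$ and the exterior region avoids a neighbourhood of $p$. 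Finally, the statement that $A_f(r)\to+\infty$ as $r\downarrow0$ is Remark~\ref{remark3.3}, since $-2(m-n)/(n-2)<0$. This divergence is consistent with monotone nonincrease, so nothing further is required.
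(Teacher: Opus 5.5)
Your proposal is correct and follows the paper's proof. You apply Proposition~\ref{Proposition 3.5.} with $u=4|\nabla s|^2+ks^2$, with finiteness coming from Remark~\ref{remark3.3}, substitute \eqref{eq:3.16} using $G^2(2/s^2)=8(2s)^{2-2m}$, and read off the sign from the curvature bound and $D_m\ge0$; your remarks on local absolute continuity and the divergence at zero spell out what the paper takes from the discussion after \eqref{eq:4.3} and from Remark~\ref{remark3.3}.
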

\begin{proof}
Remark  \ref{remark3.3} guarantees that the defining integrals are finite. Taking
\(u=4|\nabla s|^2+ks^2\) in Proposition \ref{Proposition 3.5.} and using \eqref{eq:3.16} and
\(G^2(2/s^2)=8(2s)^{2-2m}\), we obtain the formula. The exterior integral is away from the pole and is always finite; the curvature assumption and \(D_m\ge0\) give the sign.
\end{proof}

This also rigorously excludes the finite-constant corrected version of Theorem \ref{Theorem 3.4}: for any finite \(C_0\), for sufficiently small \(r\) one has \(A_f(r)>C_0\), hence
\[
\begin{aligned}
&\frac{d}{dr}\left((2\operatorname{sn}_k(r/2))^{2-m}(A_f-C_0)\right)\\
&=(2-m)(2\operatorname{sn}_k(r/2))^{1-m}\operatorname{cs}_k(r/2)(A_f-C_0)
+(2\operatorname{sn}_k(r/2))^{2-m}A_f'<0
\end{aligned}
\]
almost everywhere. This is not a nondecreasing quantity.

\begin{remark} The product
\((4|\nabla s|^2+ks^2)(4|\nabla s|^2-ks^2)\) in the definition has a strictly positive leading order \(\rho^{4\alpha-4}\) near the pole. Its volume integral converges if and only if
\begin{equation}\label{eq:4.14}
n+4\alpha-4>0,
\qquad\text{i.e.}\qquad
\begin{cases}
n\ge4,\ m>n,\\
n=3,\ 3<m<6.
\end{cases}
\end{equation}
When \(n=3,\ m=6\), the divergence is logarithmic; when \(n=3,\ m>6\), it is a power divergence. In the convergence range, radial integration gives
\begin{equation}\label{eq:4.15}
V_f(r)\asymp
(2\operatorname{sn}_k(r/2))^{-2(m-n)/(n-2)},
\qquad
(2\operatorname{sn}_k(r/2))^m V_f(r)\longrightarrow0.
\end{equation}
The second limit can also be obtained directly from the small-region limit of the two convergent volume integrals in the definition.
\end{remark}
\begin{proposition}\label{Proposition 3.8.}
 When the defining integrals of \(I_{f,u}\) and \(J_{f,u}\) are absolutely convergent,
\begin{equation}\label{eq:4.16}
J_{f,u}'(r)
=
\frac{1}{2\tan_k(r/2)}\left(I_{f,u}(r)-mJ_{f,u}(r)\right).
\end{equation}
\end{proposition}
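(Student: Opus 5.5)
The plan is a direct differentiation in $r$, followed by an algebraic comparison with the definition \eqref{eq:4.1} of $I_{f,u}$. No divergence theorem is needed, so the pole plays no role beyond the assumed absolute convergence.

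Abbreviate $a=2\operatorname{sn}_k(r/2)$, so that $da/dr=\operatorname{cs}_k(r/2)$ and $a/\operatorname{cs}_k(r/2)=2\tan_k(r/2)$. On the level set $\{b_f=r\}$ we have $s^2=a^2/4$ and $G=a^{2-m}$. By \eqref{eq:2.9}, $4|\nabla s|^2=\operatorname{cs}_k(r/2)^2|\nabla b_f|^2$ there.

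First, I would justify differentiating the two volume integrals in \eqref{eq:4.3}. Since the integrands are absolutely integrable, the coarea formula gives, for a.e. $r$ (at regular values, then a.e. via the locally absolutely continuous representatives discussed before Lemma \ref{Lemma3.1}),
\[
\frac{d}{dr}\int_{b_f<r}F\,e^{-f}dV=\int_{b_f=r}\frac{F}{|\nabla b_f|}e^{-f}d\sigma .
\]
This is the only analytic point, and the critical set of $s$ has measure zero, as shown earlier. Applying the product rule to $J_{f,u}$ then yields three contributions:
\[
J_{f,u}'=-\frac{m\operatorname{cs}_k}{a}\Bigl(J_{f,u}-\frac k4\!\int_{b_f<r}\!uGe^{-f}dV\Bigr)
+a^{-m}\!\int_{b_f=r}\!\frac{u(4|\nabla s|^2-ks^2)}{|\nabla b_f|}e^{-f}d\sigma
+\frac k4\int_{b_f=r}\!\frac{uG}{|\nabla b_f|}e^{-f}d\sigma .
\]

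The key step is to evaluate the level-set integrand using the level-set values. We have
\[
\frac{4|\nabla s|^2-ks^2}{|\nabla b_f|}=\operatorname{cs}_k^2|\nabla b_f|-\frac{ka^2}{4|\nabla b_f|}.
\]
Since $a^{-m}\cdot a^2=a^{2-m}=G$ on the level set, the term $-\frac{k}{4}a^{2-m}\int u/|\nabla b_f|$ exactly cancels the last contribution $\frac k4\int uG/|\nabla b_f|$. What remains of the boundary part is $a^{-m}\operatorname{cs}_k^2\int_{b_f=r}u|\nabla b_f|e^{-f}d\sigma$.

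Comparing with \eqref{eq:4.1}, this boundary part equals
\[
\frac{\operatorname{cs}_k}{a}\Bigl(I_{f,u}-\frac{mk}{4}\int_{b_f<r}uGe^{-f}dV\Bigr).
\]
Adding the first contribution, the two $\int uG$ terms carry coefficients $+\frac{mk}{4}$ and $-\frac{mk}{4}$ (both multiplied by $\operatorname{cs}_k/a$), so they cancel. This leaves $J_{f,u}'=\frac{\operatorname{cs}_k}{a}(I_{f,u}-mJ_{f,u})$, which is \eqref{eq:4.16}.

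I expect no real obstacle in the algebra. The only care needed is the a.e. validity of the coarea differentiation and the finiteness of each piece separately, which the absolute-convergence hypothesis provides.
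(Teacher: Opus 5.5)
Your proposal is correct and follows the paper's proof essentially step for step. Both proofs apply the product rule and the coarea formula to the two volume integrals. On the level set $G=(2\operatorname{sn}_k(r/2))^{2-m}$, so the $-ks^2$ boundary piece cancels the $\frac{k}{4}G$ term. Both then substitute $4|\nabla s|^2=\operatorname{cs}_k^2(r/2)|\nabla b_f|^2$ and conclude by cancelling the $\int uG$ terms, whose coefficients are $\pm\frac{mk}{4}$.
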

\begin{proof}
Using the product rule and the coarea formula directly,
\[
\begin{aligned}
J_{f,u}'(r)
={}&
-m(2\operatorname{sn}_k(r/2))^{-m-1}\operatorname{cs}_k(r/2)
\int_{b_f<r}u(4|\nabla s|^2-ks^2)e^{-f}dV\\
&+(2\operatorname{sn}_k(r/2))^{-m}
\int_{b_f=r}\frac{u(4|\nabla s|^2-ks^2)}{|\nabla b_f|}e^{-f}d\sigma+\frac{k}{4}\int_{b_f=r}\frac{uG}{|\nabla b_f|}e^{-f}d\sigma.
\end{aligned}
\]
On the boundary \(G=(2\operatorname{sn}_k(r/2))^{2-m}\), so the \(-ks^2\) part in the last two terms and the \(kG/4\) part cancel; also
\(4|\nabla s|^2=\operatorname{cs}_k^2(r/2)|\nabla b_f|^2\). Hence
\[
\begin{aligned}
J_{f,u}'(r)
={}&
-m\frac{\operatorname{cs}_k(r/2)}{2\operatorname{sn}_k(r/2)}
(2\operatorname{sn}_k(r/2))^{-m}
\int_{b_f<r}u(4|\nabla s|^2-ks^2)e^{-f}dV\\
&+(2\operatorname{sn}_k(r/2))^{-m}\operatorname{cs}_k^2(r/2)
\int_{b_f=r}u|\nabla b_f|e^{-f}d\sigma\\
={}&
\frac{\operatorname{cs}_k(r/2)}{2\operatorname{sn}_k(r/2)}
\left(I_{f,u}(r)-mJ_{f,u}(r)\right).
\end{aligned}
\]
In the last step, the coefficient of \(\int uG e^{-f}dV\) is \(mk/4-mk/4=0\). Since
\(\operatorname{cs}_k/(2\operatorname{sn}_k)=1/(2\tan_k)\), the conclusion follows. No integration by parts for \(u\) at the pole is used here.
\end{proof}

\begin{corollary} With the normalization \eqref{eq:2.5},
\begin{equation}\label{eq:4.17}
I_{f,1}\equiv\frac{\kappa}{m-2},
\qquad
J_{f,1}\equiv\frac{\kappa}{m(m-2)}.
\end{equation}
\end{corollary}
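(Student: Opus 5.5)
With $u\equiv1$, Proposition~\ref{Proposition 3.5.} will give that $I_{f,1}$ is constant. I will then identify the constant through the small-$r$ limit \eqref{eq:4.8}. After that, Proposition~\ref{Proposition 3.8.} turns the constancy of $I_{f,1}$ into an ODE for $J_{f,1}$, and I will solve it using the boundary behaviour of $J_{f,1}$ at $r\downarrow0$.

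\emph{Step 1: $I_{f,1}$ is constant.} The function $u\equiv1$ is smooth and $G$ is integrable near $p$, since $G\asymp\rho^{2-n}$. So Proposition~\ref{Proposition 3.5.} applies, and because $\mathcal L_f1=0$ it gives $I_{f,1}'(r)=0$ for almost every admissible $r$. The discussion before Lemma~\ref{Lemma3.1} makes $I_{f,1}$ locally absolutely continuous, hence constant on the admissible interval. To find the constant, I apply \eqref{eq:4.8} with $u(p)=1$. This gives $\lim_{r\downarrow0}I_{f,1}(r)=\kappa/(m-2)$; behind it are the flux normalization \eqref{eq:2.13} transported to the small level sets $\{b_f=r\}$ and the vanishing of $\int_{b_f<r}Ge^{-f}dV$. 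The transport is the one genuinely analytic input. I will carry it out by applying the divergence theorem to $\nabla G$ on the region between $\{\rho=\epsilon\}$ and $\{b_f=r\}$. By \eqref{eq:2.6} the difference of the two fluxes is $\frac{m(m-2)k}{4}\int Ge^{-f}dV$ over that region, and this tends to $0$ as $r\to0$.

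\emph{Step 2: $J_{f,1}$.} For $u=1$, the integrand $4|\nabla s|^2-ks^2$ is $O(\rho^{2\alpha-2})$. Its integral near the pole is therefore absolutely convergent, because $2\alpha-2+n-1>-1$. Proposition~\ref{Proposition 3.8.} then gives
\[
J_{f,1}'=\frac{1}{2\tan_k(r/2)}\Big(\frac{\kappa}{m-2}-mJ_{f,1}\Big).
\]
Put $H=J_{f,1}-\kappa/(m(m-2))$. Then $H'=-\frac{m\operatorname{cs}_k(r/2)}{2\operatorname{sn}_k(r/2)}H$, so $(2\operatorname{sn}_k(r/2))^mH$ has zero derivative almost everywhere. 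Since it is also absolutely continuous, it is constant.

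\emph{Step 3: the constant is zero.} This is the main obstacle, since it requires controlling $(2\operatorname{sn}_k(r/2))^mJ_{f,1}(r)$ as $r\to0$. I will show that this product tends to $0$. The first part of $J_{f,1}$ contributes $\int_{b_f<r}(4|\nabla s|^2-ks^2)e^{-f}dV$, which is the tail of a convergent integral and so tends to $0$. The second part is $(2\operatorname{sn}_k(r/2))^m\frac k4\int_{b_f<r}Ge^{-f}dV$, which also tends to $0$. The constant in Step~2 is therefore $0$, so $H\equiv0$, which yields \eqref{eq:4.17}.
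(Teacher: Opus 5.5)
Your proposal is correct and follows the paper's proof essentially step for step. You show that $I_{f,1}$ is constant, fix its value $\kappa/(m-2)$ through the pole flux normalization \eqref{eq:2.13}, solve the ODE \eqref{eq:4.16} with an integrating factor, and eliminate the free constant using $(2\operatorname{sn}_k(r/2))^mJ_{f,1}(r)\to0$; the remaining differences are cosmetic (you obtain $I_{f,1}'=0$ from Proposition~\ref{Proposition 3.5.} rather than \eqref{eq:4.6}, and you write out the flux transport by the divergence theorem, which in fact already gives $I_{f,1}(r)=\kappa/(m-2)$ directly at every regular $r$).
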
 
\begin{proof}
By \eqref{eq:4.6}, \(I_{f,1}'=0\); by \eqref{eq:2.13} and the integrability of \(G\),
\(\lim_{r\downarrow0}I_{f,1}(r)=\kappa/(m-2)\). Substituting this constant into \eqref{eq:4.16} and multiplying by the integrating factor \((2\operatorname{sn}_k(r/2))^m\), we get
\[
\frac{d}{dr}\left((2\operatorname{sn}_k(r/2))^mJ_{f,1}(r)\right)
=
\frac{\kappa}{m-2}(2\operatorname{sn}_k(r/2))^{m-1}\operatorname{cs}_k(r/2).
\]
After integration,
\[
J_{f,1}=\frac{\kappa}{m(m-2)}
+C(2\operatorname{sn}_k(r/2))^{-m}.
\]
Since \(4|\nabla s|^2-ks^2\) and \(G\) are integrable, the definition gives
\((2\operatorname{sn}_k(r/2))^mJ_{f,1}(r)\to0\), so \(C=0\).
\end{proof}

The constant is determined by \(\kappa\) in the weak normalization, not by the small-ball area of the effective dimension \(m\).

\begin{lemma}\label{Lemma 3.10.} On the punctured manifold,
\begin{equation}\label{eq:4.18}
\begin{aligned}
\frac12(\Delta_f+2mk)|\nabla s^2|^2
&=
|\mathring{\operatorname{Hess}}s^2|^2
+D_m
+\operatorname{Ric}_f^m(\nabla s^2,\nabla s^2)
+\frac{(\Delta_f s^2)^2}{m}
+\langle\nabla(\Delta_f+mk)s^2,\nabla s^2\rangle.
\end{aligned}
\end{equation}
Moreover,
\begin{equation}\label{eq:4.19}
\begin{aligned}
\frac{(\Delta_f s^2)^2}{m}
={}&
\frac m4(4|\nabla s|^2+ks^2)(4|\nabla s|^2-ks^2)-\frac{mk}{2}|\nabla s^2|^2+\frac{mk^2}{2}s^4.
\end{aligned}
\end{equation}
If \eqref{eq:4.14} holds, then one has the integral identity
\begin{equation}\label{eq:4.20}
\begin{aligned}
&\frac12\int_{b_f<r}\Delta_f|\nabla s^2|^2e^{-f}dV\\
&=
\int_{b_f<r}\left[
|\mathring{\operatorname{Hess}}s^2|^2
+D_m
+(\operatorname{Ric}_f^m-(m-1)kg)(\nabla s^2,\nabla s^2)
\right]e^{-f}dV\\
&\quad+\frac m2\int_{b_f=r}(4|\nabla s|^2+ks^2)\langle\nabla s^2,\nu\rangle e^{-f}d\sigma\\
&\quad-\frac{m(m-1)}{4}\int_{b_f<r}
(4|\nabla s|^2+ks^2)(4|\nabla s|^2-ks^2)e^{-f}dV\\
&\quad-\frac{(m+2)k}{2}\int_{b_f<r}|\nabla s^2|^2e^{-f}dV
+\frac{mk^2}{2}\int_{b_f<r}s^4e^{-f}dV.
\end{aligned}
\end{equation}
\end{lemma}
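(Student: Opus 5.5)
The plan is to obtain the three assertions in order: \eqref{eq:4.18} and \eqref{eq:4.19} are pointwise algebra on $M\setminus\{p\}$, and \eqref{eq:4.20} is an integration of \eqref{eq:4.18} over $\{b_f<r\}$ with the pole handled by excision. Throughout, write $P=4|\nabla s|^2+ks^2$ and $Q=4|\nabla s|^2-ks^2$. By \eqref{eq:3.2} and \eqref{eq:3.3},
\[
\Delta_f s^2=\tfrac m2 Q,\qquad (\Delta_f+mk)s^2=\tfrac m2 P .
\]

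\emph{Step 1: \eqref{eq:4.18}.} Start from the exact weighted Bochner formula \eqref{eq:3.6} and add $mk|\nabla s^2|^2$ to both sides. On the left this produces $\tfrac12(\Delta_f+2mk)|\nabla s^2|^2$. On the right, since $mk|\nabla s^2|^2=\langle\nabla(mks^2),\nabla s^2\rangle$, it merges with $\langle\nabla\Delta_f s^2,\nabla s^2\rangle$ into $\langle\nabla(\Delta_f+mk)s^2,\nabla s^2\rangle$. Nothing else is needed.

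\emph{Step 2: \eqref{eq:4.19}.} Use $(\Delta_f s^2)^2/m=\tfrac m4Q^2$ together with $Q=P-2ks^2$, which gives
\[
Q^2=PQ-2ks^2Q=PQ-8ks^2|\nabla s|^2+2k^2s^4 .
\]
Since $|\nabla s^2|^2=4s^2|\nabla s|^2$, multiplying by $m/4$ yields exactly \eqref{eq:4.19}.

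\emph{Step 3: \eqref{eq:4.20}.} First rewrite \eqref{eq:4.18} pointwise.
\begin{itemize}
\item Insert \eqref{eq:4.19} for the square term.
\item Move $mk|\nabla s^2|^2$ to the right-hand side.
\item Split off $(m-1)k|\nabla s^2|^2$ from the Ricci term.
\end{itemize}
The coefficient of $|\nabla s^2|^2$ then becomes $(m-1)k-\tfrac{mk}{2}-mk=-\tfrac{(m+2)k}{2}$. The remaining gradient term is $\tfrac m2\langle\nabla P,\nabla s^2\rangle$.

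Integrate this identity over $\{b_f<r\}\setminus B_\epsilon(p)$ and apply the weighted divergence formula \eqref{eq:2.3} to $\operatorname{div}_f(P\nabla s^2)$:
\[
\tfrac m2\int\langle\nabla P,\nabla s^2\rangle e^{-f}dV
=\tfrac m2\int_{b_f=r}P\langle\nabla s^2,\nu\rangle e^{-f}d\sigma
-\tfrac m2\int_{\rho=\epsilon}P\langle\nabla s^2,\nabla\rho\rangle e^{-f}d\sigma
-\tfrac{m^2}{4}\int PQ\,e^{-f}dV .
\]
Here the last term uses $\Delta_f s^2=\tfrac m2Q$. Combining $-\tfrac{m^2}{4}\int PQ$ with the $+\tfrac m4\int PQ$ coming from \eqref{eq:4.19} gives the coefficient $-\tfrac{m(m-1)}{4}$. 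It remains to let $\epsilon\downarrow0$.

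\emph{Main obstacle: the pole.} The work is in justifying the limit $\epsilon\downarrow0$, using the asymptotics \eqref{eq:2.15}--\eqref{eq:2.17} and $e^{-f}dV\asymp\rho^{n-1}d\rho\,d\omega$.
\begin{itemize}
\item The inner flux term has order $\rho^{2\alpha-2}\cdot\rho^{2\alpha-1}\cdot\rho^{n-1}=\rho^{n+4\alpha-4}$. This tends to $0$ exactly under \eqref{eq:4.14}.
\item The integrands $PQ$, $|\mathring{\operatorname{Hess}}s^2|^2$, $D_m$ and $\Delta_f|\nabla s^2|^2$ are all $O(\rho^{4\alpha-4})$. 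For the last one this needs the third-derivative remainder in \eqref{eq:2.11}.
\item The terms $|\nabla s^2|^2=O(\rho^{4\alpha-2})$ and $s^4$ are milder.
\item Hence every volume integrand is absolutely integrable near $p$ precisely when $n+4\alpha-4>0$. Dominated convergence then passes each term to the limit, including the left-hand side $\tfrac12\int\Delta_f|\nabla s^2|^2$.
\end{itemize}
Finally, $r$ is first taken to be a regular value so that the boundary integral is well defined; the identity then extends in the almost-everywhere sense described before Lemma~\ref{Lemma3.1}.
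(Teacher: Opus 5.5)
Your proposal is correct and follows the paper's proof essentially step by step. You add $mk|\nabla s^2|^2$ to the Bochner formula, expand the square via \eqref{eq:3.2}, integrate the gradient term by parts on $\{b_f<r\}\setminus B_\epsilon(p)$ using \eqref{eq:3.3}, and remove the inner flux with the $O(\epsilon^{n+4\alpha-4})$ bound; the only cosmetic difference is that you bound $\Delta_f|\nabla s^2|^2$ directly from the third-derivative pole asymptotics, whereas the paper reads off its $O(\rho^{4\alpha-4})$ order from the right-hand side of \eqref{eq:4.18}, which uses the same third-order information through $\nabla(4|\nabla s|^2+ks^2)$.
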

\begin{proof}
Adding \(mk|\nabla s^2|^2\) to both sides of \eqref{eq:3.6} and absorbing this term into the gradient term gives \eqref{eq:4.18}. On the other hand, \eqref{eq:3.2} gives
\[
\frac{(\Delta_f s^2)^2}{m}
=
\frac m4(4|\nabla s|^2-ks^2)^2
=
4m|\nabla s|^4-2mks^2|\nabla s|^2+\frac{mk^2}{4}s^4.
\]
Expanding the right-hand side of \eqref{eq:4.19} and using \(|\nabla s^2|^2=4s^2|\nabla s|^2\), one obtains the same expression.

For integration, first remove \(B_\epsilon(p)\). By \eqref{eq:3.3},
\[
\begin{aligned}
&\int_{\{b_f<r\}\setminus B_\epsilon(p)}\langle\nabla(\Delta_f+mk)s^2,\nabla s^2\rangle e^{-f}dV\\
&=\frac m2\int_{b_f=r}(4|\nabla s|^2+ks^2)\langle\nabla s^2,\nu\rangle e^{-f}d\sigma\\
&\quad-\frac m2\int_{\rho=\epsilon}(4|\nabla s|^2+ks^2)\langle\nabla s^2,\nabla\rho\rangle e^{-f}d\sigma\\
&\quad-\frac{m^2}{4}\int_{\{b_f<r\}\setminus B_\epsilon(p)}
(4|\nabla s|^2+ks^2)(4|\nabla s|^2-ks^2)e^{-f}dV.
\end{aligned}
\]
Writing the curvature term as the curvature excess term plus \((m-1)k|\nabla s^2|^2\), and substituting \eqref{eq:4.19}, the coefficient of the product term is \(m/4-m^2/4=-m(m-1)/4\); after moving the \(mk|\nabla s^2|^2\) term from the left-hand side, the coefficient of \(|\nabla s^2|^2\) is \((m-1)k-mk/2-mk=-(m+2)k/2\). This gives the truncated form of \eqref{eq:4.20}, with the above inner boundary term.

Now we verify the limit. By the second-order asymptotics,
\[
\begin{aligned}
&|\mathring{\operatorname{Hess}}s^2|^2+D_m
+(\operatorname{Ric}_f^m-(m-1)kg)(\nabla s^2,\nabla s^2)
=O(\rho^{4\alpha-4}),\\
&(4|\nabla s|^2+ks^2)|\nabla s^2|+|\nabla|\nabla s^2|^2|
=O(\rho^{4\alpha-3}).
\end{aligned}
\]
Hence the inner boundary integral is \(O(\epsilon^{n+4\alpha-4})\to0\), and the worst radial order of the volume integral is \(O(\rho^{n+4\alpha-5}d\rho)\), which is integrable under \eqref{eq:4.14}. The integrability of \(\Delta_f|\nabla s^2|^2\) is judged from \eqref{eq:4.18}: therein
\(\nabla(\Delta_f+mk)s^2=(m/2)\nabla(4|\nabla s|^2+ks^2)\), whose worst order is still \(O(\rho^{4\alpha-4})\). Taking the limit gives \eqref{eq:4.20}. No fourth-order derivative estimate is inferred arbitrarily from the second-order asymptotics.

\end{proof}

\begin{theorem}\label{Theorem 3.11.} If \eqref{eq:4.14} holds, then \(V_f\) is finite and locally absolutely continuous,
\begin{equation}\label{eq:4.21}
V_f'(r)=\frac{1}{2\tan_k(r/2)}(A_f(r)-mV_f(r))\le0,
\end{equation}
and
\begin{equation}\label{eq:4.22}
\begin{aligned}
&A_f'(r)-2(m-1)V_f'(r)\\
&=
\frac{8\operatorname{cs}_k(r/2)}{(2\operatorname{sn}_k(r/2))^{m+1}}
\int_{b_f<r}\left[
|\mathring{\operatorname{Hess}}s^2|^2
+D_m
+(\operatorname{Ric}_f^m-(m-1)kg)(\nabla s^2,\nabla s^2)
\right]e^{-f}dV\ge0.
\end{aligned}
\end{equation}
Therefore \(V_f\) is nonincreasing and \(A_f-2(m-1)V_f\) is nondecreasing.
\end{theorem}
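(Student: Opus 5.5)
Finiteness of $V_f$ comes from the Remark on \eqref{eq:4.14}: the product $(4|\nabla s|^2+ks^2)(4|\nabla s|^2-ks^2)$ is of order $\rho^{4\alpha-4}$, and $(4|\nabla s|^2+ks^2)G$ is of order $\rho^{2\alpha-n}$. Both are integrable against $\rho^{n-1}d\rho$ in the stated range. Local absolute continuity then follows from the coarea argument described after \eqref{eq:4.3}. The derivative identity in \eqref{eq:4.21} is Proposition \ref{Proposition 3.8.} applied with $u=4|\nabla s|^2+ks^2$, which gives $V_f'=\frac{1}{2\tan_k(r/2)}(A_f-mV_f)$. Monotonicity of $V_f$ is therefore equivalent to the inequality $A_f\le mV_f$, and I would derive this inequality from \eqref{eq:4.22}, so I would prove \eqref{eq:4.22} first.

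For \eqref{eq:4.22}, start from the integral identity \eqref{eq:4.20} of Lemma \ref{Lemma 3.10.} and rewrite every term through $A_f$ and $V_f$. Write $a=2\operatorname{sn}_k(r/2)$.
\begin{itemize}
\item The left side $\frac12\int_{b_f<r}\Delta_f|\nabla s^2|^2e^{-f}dV$ becomes a boundary flux of $\nabla|\nabla s^2|^2=\nabla(4s^2|\nabla s|^2)$ across $\{b_f=r\}$. The inner flux vanishes by the orders $O(\rho^{4\alpha-3})$ already checked in that proof.
\item The boundary term $\frac m2\int(4|\nabla s|^2+ks^2)\langle\nabla s^2,\nu\rangle$ is, by \eqref{eq:2.9}, a multiple of the boundary part of $A_f$. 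Indeed $\langle\nabla s^2,\nu\rangle=2s|\nabla s|=s\,\operatorname{cs}_k|\nabla b_f|$, so this term equals $\frac m2 \operatorname{sn}_k(r/2)\,a^{m-1}$ times the first part of $A_f$.
\item The volume product integral is $a^m$ times the first part of $V_f$.
\item The remaining terms $\int|\nabla s^2|^2$ and $\int s^4$ must be matched against the $G$-correction integrals in $A_f,V_f$, using $G=(2s)^{2-m}$. They must also be matched against the flux term on the left, which contains $\nabla|\nabla s|^2$ and is related to $A_f'$ via \eqref{eq:4.6}: $A_f'=a^{1-m}\operatorname{cs}_k\int\langle\nabla u,\nu\rangle$ with $u=4|\nabla s|^2+ks^2$.
\end{itemize}

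Concretely, on the level set $\langle\nabla(4s^2|\nabla s|^2),\nu\rangle = s^2\langle\nabla u,\nu\rangle - k\,s^2\langle\nabla s^2,\nu\rangle+2|\nabla s|^2\langle \nabla s^2,\nu\rangle$. With $s$ constant there, this splits into an $A_f'$ piece plus boundary integrals of $u$ times $|\nabla b_f|$ and of $|\nabla b_f|$ alone. The latter is controlled by $I_{f,1}\equiv\kappa/(m-2)$ via \eqref{eq:4.17}.

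After substituting, multiply the resulting identity by $8\operatorname{cs}_k/a^{m+1}$ and compare with $A_f'-2(m-1)V_f'$, where $V_f'$ is expressed through \eqref{eq:4.21}. The $\int s^4$ and $\int|\nabla s^2|^2$ volume terms, together with the $G$-weighted terms, have to cancel. I would verify this by differentiating in $r$ and using the pointwise relations \eqref{eq:3.2} and \eqref{eq:3.3} on level sets; the alternative divergence route treats $\int_{b_f<r}(\Delta_f+mk)s^2\cdot u$ via \eqref{eq:3.3}. This coefficient cancellation is the main obstacle. If direct bookkeeping fails, I would instead verify \eqref{eq:4.22} by differentiating its right-hand side. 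That means showing that $a^{m+1}(A_f'-2(m-1)V_f')/(8\operatorname{cs}_k)$ has $r$-derivative equal to the boundary integral of the defect over $|\nabla b_f|$, using Theorem \ref{Theorem 3.6.} and Proposition \ref{Proposition 3.8.}, and that it tends to $0$ as $r\downarrow0$ by the orders in \eqref{eq:4.15}.

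Finally, given \eqref{eq:4.22}, the inequality $V_f'\le0$ follows by an ODE argument. Set $\Phi=a^mV_f$. By \eqref{eq:4.21}, $\Phi'=\frac12 a^{m-1}\operatorname{cs}_k A_f$, and $\Phi\to0$ as $r\downarrow0$ by \eqref{eq:4.15}. Hence
\[
a^m V_f(r)=\int_0^r\tfrac12 a(t)^{m-1}\operatorname{cs}_k(t/2)A_f(t)\,dt\ \ge\ A_f(r)\,\frac{a(r)^m}{m},
\]
since $A_f$ is nonincreasing by Theorem \ref{Theorem 3.6.} and $\int_0^r\tfrac12 a^{m-1}\operatorname{cs}_k\,dt=a(r)^m/m$. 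Thus $mV_f\ge A_f$, so $V_f'\le0$. The nondecrease of $A_f-2(m-1)V_f$ is immediate from \eqref{eq:4.22} and the curvature assumption.
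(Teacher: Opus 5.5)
Your proposal goes through, but only via your fallback. The primary bookkeeping from \eqref{eq:4.20} does not close as sketched.

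First, a coefficient error. With $u=4|\nabla s|^2+ks^2$ one has $\nabla(4s^2|\nabla s|^2)=s^2\nabla u+(4|\nabla s|^2-ks^2)\nabla s^2$, so your coefficient $2|\nabla s|^2$ should be $4|\nabla s|^2$.

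More importantly, the volume terms $\int|\nabla s^2|^2$ and $\int s^4$ in \eqref{eq:4.20} cannot be matched by the $G$-corrections, since there is no pointwise relation between $uG$ and them. They are cancelled instead by the flux piece $-ks^2\langle\nabla s^2,\nu\rangle=-\tfrac12\langle\nabla(ks^4),\nu\rangle$. Convert it back to $-\tfrac12\int_{b_f<r}\Delta_f(ks^4)e^{-f}dV$ and use $\Delta_fs^4=(m+2)|\nabla s^2|^2-mks^4$ (from \eqref{eq:3.2}, cf.\ \eqref{eq:4.28}); this removes both terms exactly. The $G$-corrections of $A_f$ and $V_f$ then cancel against each other.

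Routing $\int_{b_f=r}|\nabla b_f|e^{-f}d\sigma$ through $I_{f,1}\equiv\kappa/(m-2)$ only brings in $\kappa$ and $\int_{b_f<r}Ge^{-f}dV$. Neither appears in \eqref{eq:4.22}, and they can be removed only by that same divergence step. Repaired this way, your primary route becomes exactly the paper's argument: the integrated divergence identity \eqref{eq:4.23}, or equivalently the paper's second proof of Theorem \ref{Theorem 3.11.}.

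Your fallback is correct and genuinely different. Let $Q$ be the bracket in \eqref{eq:4.22}, $a=2\operatorname{sn}_k(r/2)$, and $E(r)=\int_{s>\operatorname{sn}_k(r/2)}(2s)^{2-2m}Q\,e^{-f}dV$. By Theorem \ref{Theorem 3.6.} and \eqref{eq:4.16},
\[
\Psi:=\frac{a^{m+1}}{8\operatorname{cs}_k(r/2)}\bigl(A_f'-2(m-1)V_f'\bigr)=-a^{2m-2}E-\frac{m-1}{4}a^m(A_f-mV_f).
\]
Since \eqref{eq:4.16} gives $\frac{d}{dr}\bigl(a^m(A_f-mV_f)\bigr)=a^mA_f'=-8\operatorname{cs}_k(r/2)a^{2m-3}E$, the two $E$-terms in $\Psi'$ carry coefficients $\mp2(m-1)$ and cancel. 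This leaves $\Psi'=\int_{b_f=r}Q|\nabla b_f|^{-1}e^{-f}d\sigma$, as you claimed.

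For $\Psi(0^+)=0$, however, \eqref{eq:4.15} alone is not enough. You also need:
\begin{itemize}
\item $a^mA_f\to0$, from \eqref{eq:4.10};
\item $a^{2m-2}E=-\tfrac{a^2}{8}\int_{b_f=r}\langle\nabla u,\nu\rangle e^{-f}d\sigma=O(\rho^{n+4\alpha-4})$, from \eqref{eq:4.6} and \eqref{eq:2.17}.
\end{itemize}
Every piece is $O(\rho^{n+4\alpha-4})$, so \eqref{eq:4.14} enters exactly where the paper needs it for the inner flux of \eqref{eq:4.23}.

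The paper's route produces a pointwise divergence identity, integrated once and reused for $m<0$ in Theorem \ref{Theorem 6.3}. Yours needs no new Bochner bookkeeping, only the exterior formula, the ODE \eqref{eq:4.16}, and a pole limit. As a bonus it gives $\tfrac{m-1}{4}a^m(mV_f-A_f)=\int_{b_f<r}Q\,e^{-f}dV+a^{2m-2}E\ge0$ directly.

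In your final ODE step, $da/dr=\operatorname{cs}_k(r/2)$, so $\Phi'=a^{m-1}\operatorname{cs}_kA_f$ and $\int_0^ra^{m-1}\operatorname{cs}_k\,dt=a^m/m$. Your two spurious factors $\tfrac12$ cancel, so the conclusion stands. Note also that once \eqref{eq:4.22} is established, $2(m-1)V_f'\le A_f'\le0$ is immediate.
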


\begin{proof}
To directly match the cancellation in Lemma \ref{Lemma 3.10.} with \(A_f,V_f\), multiply the core equation by \(s^2\) and rearrange it by the product rule as
\begin{equation}\label{eq:4.23}
\begin{aligned}
&\operatorname{div}_f\left(
s^2\nabla(4|\nabla s|^2+ks^2)
-(m-1)(4|\nabla s|^2+ks^2)\nabla s^2
\right)\\
&=
s^2\Delta_f(4|\nabla s|^2+ks^2)
+(2-m)\langle\nabla s^2,\nabla(4|\nabla s|^2+ks^2)\rangle\\
&\quad-(m-1)(4|\nabla s|^2+ks^2)\Delta_f s^2\\
&=
2\left[
|\mathring{\operatorname{Hess}}s^2|^2
+D_m
+(\operatorname{Ric}_f^m-(m-1)kg)(\nabla s^2,\nabla s^2)
\right]\\
&\quad-\frac{m(m-1)}{2}
(4|\nabla s|^2+ks^2)(4|\nabla s|^2-ks^2).
\end{aligned}
\end{equation}
The coefficient of the cross term is \(1-(m-1)=2-m\). This divergence rearrangement is compatible with Lemma \ref{Lemma 3.10.} because
\[
\begin{aligned}
&s^2\nabla(4|\nabla s|^2+ks^2)
-(m-1)(4|\nabla s|^2+ks^2)\nabla s^2\\
&=\nabla|\nabla s^2|^2
-m(4|\nabla s|^2+ks^2)\nabla s^2
+2ks^2\nabla s^2.
\end{aligned}
\]
Integrate \eqref{eq:4.23} over the region obtained by removing a small ball. The absolute value of the inner flux is \(O(\epsilon^{n+4\alpha-4})\), which tends to zero under \eqref{eq:4.14}. On the outer boundary,
\(s^2=\operatorname{sn}_k^2(r/2)\) and
\(\langle\nabla s^2,\nu\rangle=\operatorname{sn}_k(r/2)\operatorname{cs}_k(r/2)|\nabla b_f|\). By \eqref{eq:4.6}, the first outer boundary term is
\[
\begin{aligned}
&\operatorname{sn}_k^2(r/2)
\int_{b_f=r}\langle\nabla(4|\nabla s|^2+ks^2),\nu\rangle e^{-f}d\sigma=\frac{(2\operatorname{sn}_k(r/2))^{m+1}}{4\operatorname{cs}_k(r/2)}A_f'(r).
\end{aligned}
\]
By the definition of \(A_f\), the second term is
\[
\begin{aligned}
&-(m-1)\operatorname{sn}_k(r/2)\operatorname{cs}_k(r/2)
\int_{b_f=r}(4|\nabla s|^2+ks^2)|\nabla b_f|e^{-f}d\sigma\\
&=-\frac{m-1}{2}(2\operatorname{sn}_k(r/2))^m
\left[
A_f(r)-\frac{mk}{4}\int_{b_f<r}(4|\nabla s|^2+ks^2)G e^{-f}dV
\right].
\end{aligned}
\]
On the other hand, by the definition of \(V_f\), the product volume integral moved to the other side is
\[
\begin{aligned}
&\frac{m(m-1)}{2}\int_{b_f<r}
(4|\nabla s|^2+ks^2)(4|\nabla s|^2-ks^2)e^{-f}dV\\
&=\frac{m(m-1)}{2}(2\operatorname{sn}_k(r/2))^m
\left[
V_f(r)-\frac{k}{4}\int_{b_f<r}(4|\nabla s|^2+ks^2)G e^{-f}dV
\right].
\end{aligned}
\]
The two volume integrals involving \(G\) have opposite coefficients, both equal to
\[m(m-1)k(2\operatorname{sn}_k(r/2))^m/8.\] Therefore the integrated identity finally becomes
\[
\begin{aligned}
&2\int_{b_f<r}\left[
|\mathring{\operatorname{Hess}}s^2|^2
+D_m
+(\operatorname{Ric}_f^m-(m-1)kg)(\nabla s^2,\nabla s^2)
\right]e^{-f}dV\\
&=\frac{(2\operatorname{sn}_k(r/2))^{m+1}}{4\operatorname{cs}_k(r/2)}A_f'
-\frac{m-1}{2}(2\operatorname{sn}_k(r/2))^m(A_f-mV_f)\\
&=\frac{(2\operatorname{sn}_k(r/2))^{m+1}}{4\operatorname{cs}_k(r/2)}
\left(A_f'-2(m-1)V_f'\right).
\end{aligned}
\]
The last step uses \eqref{eq:4.16}, yielding \eqref{eq:4.22}.

From \(A_f'\le0\) and \eqref{eq:4.22} we already have
\(2(m-1)V_f'\le A_f'\le0\). Next we give an independent verification of \(A_f\le mV_f\) by an integral average. By \eqref{eq:4.16},
\[
\frac{d}{dr}\left((2\operatorname{sn}_k(r/2))^mV_f(r)\right)
=
(2\operatorname{sn}_k(r/2))^{m-1}\operatorname{cs}_k(r/2)A_f(r).
\]
By the zero-point limit \eqref{eq:4.15} and the integrable order in \eqref{eq:4.10},
\begin{equation}\label{eq:4.24}
V_f(r)
=
(2\operatorname{sn}_k(r/2))^{-m}
\int_0^r(2\operatorname{sn}_k(\tau/2))^{m-1}\operatorname{cs}_k(\tau/2)A_f(\tau)d\tau.
\end{equation}
Theorem \ref{Theorem 3.6.} gives \(A_f(\tau)\ge A_f(r)\). The weight is positive, and
\[
\int_0^r(2\operatorname{sn}_k(\tau/2))^{m-1}\operatorname{cs}_k(\tau/2)d\tau
=
\frac1m(2\operatorname{sn}_k(r/2))^m.
\]
Therefore \(mV_f(r)\ge A_f(r)\), and substituting this into \eqref{eq:4.16} gives \eqref{eq:4.21}.
\end{proof}

\subsection{A second proof of Theorem \ref{Theorem 3.11.}}

To exhibit the computation term by term in line with the Bochner expansion, we now derive \eqref{eq:4.22} directly from Lemma \ref{Lemma 3.10.} without using the divergence formula \eqref{eq:4.23}. Throughout this subsection we assume \eqref{eq:4.14}.

First, multiply \eqref{eq:4.20} by
\(8\operatorname{cs}_k(r/2)/(2\operatorname{sn}_k(r/2))^{m+1}\). For the outer boundary term we use
\[
\langle\nabla s^2,\nu\rangle
=
\operatorname{sn}_k(r/2)\operatorname{cs}_k(r/2)|\nabla b_f|,
\qquad
\operatorname{ct}_k(r/2)=\frac{\operatorname{cs}_k(r/2)}{\operatorname{sn}_k(r/2)},
\]
which becomes
\[
\begin{aligned}
&\frac{4m\operatorname{cs}_k(r/2)}
{(2\operatorname{sn}_k(r/2))^{m+1}}
\int_{b_f=r}(4|\nabla s|^2+ks^2)\langle\nabla s^2,\nu\rangle e^{-f}d\sigma\\
&=m\operatorname{ct}_k(r/2)(2\operatorname{sn}_k(r/2))^{1-m}\operatorname{cs}_k(r/2)
\int_{b_f=r}(4|\nabla s|^2+ks^2)|\nabla b_f|e^{-f}d\sigma\\
&=m\operatorname{ct}_k(r/2)A_f(r)
-\frac{m^2k}{4}\operatorname{ct}_k(r/2)
\int_{b_f<r}(4|\nabla s|^2+ks^2)G e^{-f}dV.
\end{aligned}
\]
The product volume integral term becomes
\[
\begin{aligned}
&-\frac{2m(m-1)\operatorname{cs}_k(r/2)}
{(2\operatorname{sn}_k(r/2))^{m+1}}
\int_{b_f<r}(4|\nabla s|^2+ks^2)(4|\nabla s|^2-ks^2)e^{-f}dV\\
&=-m(m-1)\operatorname{ct}_k(r/2)V_f(r)
+\frac{m(m-1)k}{4}\operatorname{ct}_k(r/2)
\int_{b_f<r}(4|\nabla s|^2+ks^2)G e^{-f}dV.
\end{aligned}
\]
The sum of the two coefficients of the volume integrals involving \(G\) is
\[
-\frac{m^2k}{4}+\frac{m(m-1)k}{4}
=
-\frac{mk}{4}.
\]
Also,
\[
\frac{4\operatorname{cs}_k(r/2)}{(2\operatorname{sn}_k(r/2))^{m+1}}
=
2\operatorname{ct}_k(r/2)(2\operatorname{sn}_k(r/2))^{-m}.
\]
Thus the complete form of Lemma \ref{Lemma 3.10.} is
\begin{equation}\label{eq:4.25}
\begin{aligned}
&\frac{4\operatorname{cs}_k(r/2)}{(2\operatorname{sn}_k(r/2))^{m+1}}
\int_{b_f<r}\Delta_f|\nabla s^2|^2e^{-f}dV
+\frac{mk}{4}\operatorname{ct}_k(r/2)
\int_{b_f<r}(4|\nabla s|^2+ks^2)G e^{-f}dV\\
&=
\frac{8\operatorname{cs}_k(r/2)}{(2\operatorname{sn}_k(r/2))^{m+1}}
\int_{b_f<r}\left[
|\mathring{\operatorname{Hess}}s^2|^2
+D_m
+(\operatorname{Ric}_f^m-(m-1)kg)(\nabla s^2,\nabla s^2)
\right]e^{-f}dV\\
&\quad+m\operatorname{ct}_k(r/2)A_f(r)
-m(m-1)\operatorname{ct}_k(r/2)V_f(r)\\
&\quad+2mk^2\operatorname{ct}_k(r/2)(2\operatorname{sn}_k(r/2))^{-m}
\int_{b_f<r}s^4e^{-f}dV\\
&\quad-2(m+2)k\operatorname{ct}_k(r/2)(2\operatorname{sn}_k(r/2))^{-m}
\int_{b_f<r}|\nabla s^2|^2e^{-f}dV.
\end{aligned}
\end{equation}
Next we compute \(A_f'+\operatorname{ct}_k(r/2)A_f\). Using the identity
\[
s^2(4|\nabla s|^2+ks^2)=|\nabla s^2|^2+ks^4,
\]
and differentiating in the outer normal direction, we get
\[
\begin{aligned}
\operatorname{sn}_k^2(r/2)\langle\nabla(4|\nabla s|^2+ks^2),\nu\rangle
=\langle\nabla(|\nabla s^2|^2+ks^4),\nu\rangle
-(4|\nabla s|^2+ks^2)\langle\nabla s^2,\nu\rangle.
\end{aligned}
\]
Multiplying by
\((2\operatorname{sn}_k(r/2))^{1-m}\operatorname{cs}_k(r/2)/\operatorname{sn}_k^2(r/2)\) and integrating, by \eqref{eq:4.6} and the definition of \(A_f\) we obtain
\begin{equation}\label{eq:4.26}
\begin{aligned}
A_f'(r)+\operatorname{ct}_k(r/2)A_f(r)
&=
\frac{4\operatorname{cs}_k(r/2)}{(2\operatorname{sn}_k(r/2))^{m+1}}
\int_{b_f=r}\langle\nabla(|\nabla s^2|^2+ks^4),\nu\rangle e^{-f}d\sigma\\
&\quad+\frac{mk}{4}\operatorname{ct}_k(r/2)
\int_{b_f<r}(4|\nabla s|^2+ks^2)G e^{-f}dV.
\end{aligned}
\end{equation}
The inner boundary terms in the integration by parts satisfy
\[
\int_{\rho=\epsilon}|\nabla|\nabla s^2|^2|e^{-f}d\sigma
=O(\epsilon^{n+4\alpha-4}),
\qquad
\int_{\rho=\epsilon}|\nabla s^4|e^{-f}d\sigma
=O(\epsilon^{n+4\alpha-2}).
\]
Hence both tend to zero, and the boundary integral may be replaced by the interior weighted Laplace integral:
\begin{equation}\label{eq:4.27}
\begin{aligned}
A_f'+\operatorname{ct}_k(r/2)A_f
&=
\frac{4\operatorname{cs}_k(r/2)}{(2\operatorname{sn}_k(r/2))^{m+1}}
\int_{b_f<r}\Delta_f(|\nabla s^2|^2+ks^4)e^{-f}dV\\
&\quad+\frac{mk}{4}\operatorname{ct}_k(r/2)
\int_{b_f<r}(4|\nabla s|^2+ks^2)G e^{-f}dV.
\end{aligned}
\end{equation}
To cancel the last two lines of \eqref{eq:4.25}, expand
\begin{equation}\label{eq:4.28}
\begin{aligned}
2k\Delta_f s^4
&=4ks^2\Delta_f s^2+4k|\nabla s^2|^2\\
&=4ks^2\left(2m|\nabla s|^2-\frac{mk}{2}s^2\right)+4k|\nabla s^2|^2\\
&=-2mk^2s^4+2(m+2)k|\nabla s^2|^2.
\end{aligned}
\end{equation}
Therefore, after adding
\[
\frac{4\operatorname{cs}_k(r/2)}{(2\operatorname{sn}_k(r/2))^{m+1}}
\int_{b_f<r}\Delta_f(ks^4)e^{-f}dV
=
2k\operatorname{ct}_k(r/2)(2\operatorname{sn}_k(r/2))^{-m}
\int_{b_f<r}\Delta_f s^4e^{-f}dV
\]
to both sides of \eqref{eq:4.25}, the coefficients of \(s^4\) and \(|\nabla s^2|^2\) become respectively
\[
2mk^2-2mk^2=0,
\qquad
-2(m+2)k+2(m+2)k=0.
\]
Combining with \eqref{eq:4.27}, we obtain
\begin{equation}\label{eq:4.27b}
\begin{aligned}
&A_f'+\operatorname{ct}_k(r/2)A_f\\
&=
\frac{8\operatorname{cs}_k(r/2)}{(2\operatorname{sn}_k(r/2))^{m+1}}
\int_{b_f<r}\left[
|\mathring{\operatorname{Hess}}s^2|^2
+D_m
+(\operatorname{Ric}_f^m-(m-1)kg)(\nabla s^2,\nabla s^2)
\right]e^{-f}dV\\
&\quad+m\operatorname{ct}_k(r/2)A_f
-m(m-1)\operatorname{ct}_k(r/2)V_f.
\end{aligned}
\end{equation}
Moving terms gives
\begin{equation}\label{eq:4.27c}
\begin{aligned}
&A_f'-(m-1)\operatorname{ct}_k(r/2)(A_f-mV_f)\\
&=
\frac{8\operatorname{cs}_k(r/2)}{(2\operatorname{sn}_k(r/2))^{m+1}}
\int_{b_f<r}\left[
|\mathring{\operatorname{Hess}}s^2|^2
+D_m
+(\operatorname{Ric}_f^m-(m-1)kg)(\nabla s^2,\nabla s^2)
\right]e^{-f}dV.
\end{aligned}
\end{equation}
Since \(2V_f'=\operatorname{ct}_k(r/2)(A_f-mV_f)\), we again obtain \eqref{eq:4.22}. This gives an independently organized cancellation process with every coefficient identical to the preceding one.

\subsection{What remains when the conditions fail}

When \(n=3,\ m\ge6\), the product integral defining \(V_f\) diverges, so Theorem \ref{Theorem 3.11.} cannot be used as a monotonicity formula for finite quantities. The interior integral of the sum of curvature, Hessian, and \(D_m\) also diverges: by the second-order version of \eqref{eq:2.15},
\[
\Delta s^2=\frac{m\alpha^2}{2}c_p^{-2/(m-2)}\rho^{2\alpha-2}+o(\rho^{2\alpha-2}),
\qquad
\langle\nabla f,\nabla s^2\rangle=O(\rho^{2\alpha-1}),
\]
so
\[
D_m\sim
\frac{m-n}{mn}
\left(\frac{m\alpha^2}{2}c_p^{-2/(m-2)}\right)^2\rho^{4\alpha-4}.
\]
The coefficient is strictly positive. In this parameter range, one retains only the divergence identity \eqref{eq:4.23} after removing a small ball and its inner boundary flux, and does not take an inadmissible zero-point limit. This paper does not introduce renormalized integrals, nor does it claim monotonicity for undefined quantities.

\section{Monotonicity formulas with parameter \texorpdfstring{$\beta$}{beta}}\label{sec:beta}

In this section we use
\begin{equation}\label{eq:5.1}
v=(4|\nabla s|^2+ks^2)^{1/2},\qquad
B=\operatorname{Hess}s^2-\frac{\Delta s^2}{n}g,\qquad
B_f=\operatorname{Hess}s^2-\frac{\Delta_f s^2}{m}g.
\end{equation}
On the punctured manifold \(v>0\), so every real power \(v^\beta\) is smooth; below we take \(\beta>0\). Here \(B\) is the trace-free Hessian in the actual \(n\)-dimensional sense, while \(B_f\) is generally not trace-free. They satisfy
\begin{equation}\label{eq:5.3}
\begin{aligned}
B_f&=B+\left(\frac{\Delta s^2}{n}-\frac{\Delta_f s^2}{m}\right)g
=B+\frac{\operatorname{tr}_gB_f}{n}g,\\
\operatorname{tr}_gB_f&=\Delta s^2-\frac nm\Delta_f s^2
=\frac{m-n}{m}\Delta_f s^2+\langle\nabla f,\nabla s^2\rangle.
\end{aligned}
\end{equation}
Let \(\nu=\nabla s/|\nabla s|\) be the unit normal to a regular level set. \(B_f(\nu)\) denotes the vector corresponding to the one-form \(B_f(\nu,\cdot)\), and \(B_f(\nu)^\top\) denotes its tangential component; \((B_f)^\top\) denotes the restriction of the tensor to the tangent space. These two tangential notations refer to different objects.
\subsection{Some useful Lemmas}
\begin{lemma}\label{Lemma 4.1.} Let \(\Pi(Y,Z)=\langle\nabla_Y\nu,Z\rangle\), where \(Y,Z\) are tangent vectors, and let \(g^\top\) be the induced metric on the level set and
\(\mathring\Pi=\Pi-(\operatorname{tr}\Pi/(n-1))g^\top\). Then
\begin{equation}\label{eq:5.4}
\begin{aligned}
|\nabla s^2|\mathring\Pi
=B^\top+\frac{B(\nu,\nu)}{n-1}g^\top
=(B_f)^\top-\frac{\operatorname{tr}_gB_f-B_f(\nu,\nu)}{n-1}g^\top.
\end{aligned}
\end{equation}
\end{lemma}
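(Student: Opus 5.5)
We argue on a regular level set of \(s\) (equivalently of \(s^2\)), where \(\nu=\nabla s^2/|\nabla s^2|\). Set \(h=s^2\). The plan is to compute the second fundamental form directly from the Hessian of \(h\), subtract its trace part, and then rewrite the result first in terms of \(B\) and then in terms of \(B_f\) using \eqref{eq:5.3}. The whole computation is pointwise, so no pole analysis enters.

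\textbf{Step 1: the second fundamental form.} For tangent vectors \(Y,Z\) we have \(\langle\nabla h,Z\rangle=0\) along the level set. Hence
\[
\Pi(Y,Z)=\langle\nabla_Y(\nabla h/|\nabla h|),Z\rangle=\frac{\operatorname{Hess}h(Y,Z)}{|\nabla h|},
\]
since the derivative of the scalar factor \(1/|\nabla h|\) multiplies \(\nabla h\), which is orthogonal to \(Z\). Thus \(|\nabla s^2|\,\Pi=(\operatorname{Hess}s^2)^\top\).

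\textbf{Step 2: the trace-free part in terms of \(B\).} Since \(\operatorname{Hess}s^2=B+(\Delta s^2/n)g\) and \(g^\top\) is the tangential restriction of \(g\), we get
\[
|\nabla s^2|\Pi=B^\top+\frac{\Delta s^2}{n}g^\top.
\]
Taking the trace over the \((n-1)\)-dimensional tangent space and using \(\operatorname{tr}B=0\) gives \(\operatorname{tr}B^\top=-B(\nu,\nu)\). Therefore
\[
|\nabla s^2|\operatorname{tr}\Pi=-B(\nu,\nu)+\frac{(n-1)\Delta s^2}{n}.
\]
Subtracting \((\operatorname{tr}\Pi/(n-1))g^\top\) cancels the \(\Delta s^2/n\) term and leaves
\[
|\nabla s^2|\mathring\Pi=B^\top+\frac{B(\nu,\nu)}{n-1}g^\top,
\]
which is the first equality.

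\textbf{Step 3: the form in terms of \(B_f\).} Write \(t=\operatorname{tr}_gB_f\), so that \(B=B_f-(t/n)g\) by \eqref{eq:5.3}. Then \(B^\top=(B_f)^\top-(t/n)g^\top\) and \(B(\nu,\nu)=B_f(\nu,\nu)-t/n\). Substituting into Step 2, the coefficient of \(g^\top\) becomes
\[
-\frac tn+\frac{B_f(\nu,\nu)-t/n}{n-1}=\frac{B_f(\nu,\nu)-t}{n-1},
\]
because \(-t(n-1)/n-t/n=-t\). This gives the second equality.

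The only point needing care is the sign and trace bookkeeping. In particular, the trace of \(B^\top\) over the level set picks up \(-B(\nu,\nu)\) precisely because \(B\) is trace-free in dimension \(n\); the same step would fail for \(B_f\), whose trace is \(t\neq0\) in general. This is exactly why the \(B_f\) form carries the extra \(\operatorname{tr}_gB_f\) term.
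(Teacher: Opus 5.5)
Your proof is correct and follows essentially the paper's argument: both identify $|\nabla s^2|\,\Pi=(\operatorname{Hess}s^2)^\top$ and then take the tangential trace-free part, using $\operatorname{tr}_{g^\top}B^\top=-B(\nu,\nu)$. The only cosmetic difference is in the $B_f$ form. The paper reads it off directly from $\operatorname{tr}_{g^\top}(B_f)^\top=\operatorname{tr}_gB_f-B_f(\nu,\nu)$, whereas you substitute $B=B_f-(\operatorname{tr}_gB_f/n)g$ into the $B$ form, and the two routes are equivalent.
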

\begin{proof}
Since \(\nabla s^2=|\nabla s^2|\nu\), for tangent vectors \(Y,Z\),
\[
\begin{aligned}
\operatorname{Hess}s^2(Y,Z)
=\langle\nabla_Y(|\nabla s^2|\nu),Z\rangle
=Y(|\nabla s^2|)\langle\nu,Z\rangle+|\nabla s^2|\langle\nabla_Y\nu,Z\rangle
=|\nabla s^2|\Pi(Y,Z).
\end{aligned}
\]
Subtracting any scalar multiple of the metric from the tangential restriction does not change its tangential trace-free part. Also,
\(\operatorname{tr}_{g^\top}B^\top=-B(\nu,\nu)\) and
\(\operatorname{tr}_{g^\top}(B_f)^\top=\operatorname{tr}_gB_f-B_f(\nu,\nu)\), so \eqref{eq:5.4} follows. Here the denominator must be the actual dimension \(n-1\) of the level set.
\end{proof}

\begin{lemma}\label{Lemma 4.2.} At regular points,
\begin{equation}\label{eq:5.5}
\begin{aligned}
|B|^2+D_m
={}&
|\nabla s^2|^2|\mathring\Pi|^2
+\frac{m}{m-1}|B_f(\nu)|^2
+\frac{m-2}{m-1}|B_f(\nu)^\top|^2\\
&+\frac{m-1}{(n-1)(m-n)}
\left(\operatorname{tr}_gB_f-\frac{m-n}{m-1}B_f(\nu,\nu)\right)^2.
\end{aligned}
\end{equation}
\end{lemma}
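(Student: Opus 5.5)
The identity \eqref{eq:5.5} is pointwise and purely algebraic, so the plan is to expand both sides in an adapted orthonormal frame and compare coefficients. At a regular point, choose an orthonormal frame $e_1=\nu,e_2,\dots,e_n$ with $e_2,\dots,e_n$ tangent to the level set. Introduce the abbreviations
\[
a=B_f(\nu,\nu),\qquad w=B_f(\nu)^\top,\qquad T=(B_f)^\top,\qquad t=\operatorname{tr}_gB_f .
\]
Then $|B_f(\nu)|^2=a^2+|w|^2$ and $\operatorname{tr}_{g^\top}T=t-a$.

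\textbf{Step 1: expand $|B|^2$.} By \eqref{eq:5.3}, $B=B_f-(t/n)g$. Splitting into the normal-normal, mixed and tangential blocks gives
\[
|B|^2=(a-t/n)^2+2|w|^2+|T-(t/n)g^\top|^2 .
\]
Next split $T-(t/n)g^\top$ into its tangential trace-free part and its trace part.
\begin{itemize}
\item By Lemma \ref{Lemma 4.1.}, the trace-free part is $|\nabla s^2|\mathring\Pi$.
\item The trace part has trace $t-a-(n-1)t/n=t/n-a$ on the $(n-1)$-dimensional tangent space.
\end{itemize}
Therefore
\[
|B|^2=|\nabla s^2|^2|\mathring\Pi|^2+2|w|^2+\frac{n}{n-1}\Bigl(a-\frac tn\Bigr)^2 .
\]

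\textbf{Step 2: express $D_m$ in terms of $t$.} Use $\Delta_f s^2=\Delta s^2-\langle\nabla f,\nabla s^2\rangle$ in \eqref{eq:5.3}. This gives
\[
t=\frac nm\Bigl(\frac{m-n}{n}\Delta s^2+\langle\nabla f,\nabla s^2\rangle\Bigr),
\]
so \eqref{eq:3.4} becomes $D_m=\dfrac{m}{n(m-n)}\,t^2$.

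\textbf{Step 3: match coefficients.} The term $|\nabla s^2|^2|\mathring\Pi|^2$ appears identically on both sides.
\begin{itemize}
\item The $|w|^2$ coefficient on the right of \eqref{eq:5.5} is $\frac{m}{m-1}+\frac{m-2}{m-1}=2$, which matches Step 1.
\item What remains is the quadratic-form identity in $(a,t)$:
\[
\frac{n}{n-1}\Bigl(a-\frac tn\Bigr)^2+\frac{m\,t^2}{n(m-n)}
=\frac{m}{m-1}a^2+\frac{m-1}{(n-1)(m-n)}\Bigl(t-\frac{m-n}{m-1}a\Bigr)^2 .
\]
\end{itemize}
Compare the three coefficients of this identity.
\begin{itemize}
\item For $a^2$, both sides give $n/(n-1)$, since $m(n-1)+(m-n)=n(m-1)$.
\item For $at$, both sides give $-2/(n-1)$.
\item For $t^2$, both sides give $(m-1)/\bigl((n-1)(m-n)\bigr)$, since $(m-n)+m(n-1)=n(m-1)$.
\end{itemize}

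The only point needing care is the bookkeeping in Step 1. The trace-free projection must be taken on the $(n-1)$-dimensional level set, as in Lemma \ref{Lemma 4.1.}, not with $m$. The residual trace $t/n-a$ must also be correctly assigned between the tangential block and the normal component. Once $D_m$ is rewritten through $t$ in Step 2, the rest is the elementary coefficient check above.
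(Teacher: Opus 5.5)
Your proposal is correct and follows essentially the paper's route. Both arguments use an adapted orthonormal frame, Lemma~\ref{Lemma 4.1.} for the tangential trace-free block, the identity $D_m=\frac{m}{n(m-n)}(\operatorname{tr}_gB_f)^2$, and a scalar quadratic identity in $(B_f(\nu,\nu),\operatorname{tr}_gB_f)$.

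The only difference is cosmetic. The paper first uses $|B_f|^2=|B|^2+(\operatorname{tr}_gB_f)^2/n$ to rewrite $|B|^2+D_m=|B_f|^2+(\operatorname{tr}_gB_f)^2/(m-n)$ and then splits $|B_f|^2$ into blocks. You split $|B|^2$ directly. After expansion, your quadratic identity in $(a,t)$ coincides with the paper's.
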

\begin{proof}
First verify the relation between the weighted square term and the trace. By \eqref{eq:5.3},
\[
\begin{aligned}
\operatorname{tr}_gB_f
=\frac{m-n}{m}\Delta s^2+\frac nm\langle\nabla f,\nabla s^2\rangle
=\frac nm\left(\frac{m-n}{n}\Delta s^2+\langle\nabla f,\nabla s^2\rangle\right).
\end{aligned}
\]
Substituting into \eqref{eq:3.4} gives
\begin{equation}\label{eq:5.6}
D_m=\frac{m}{n(m-n)}(\operatorname{tr}_gB_f)^2.
\end{equation}
On the other hand, \(\langle B,g\rangle=\operatorname{tr}_gB=0\) and \(|g|^2=n\), so
\[
\begin{aligned}
|B_f|^2
=\left|B+\frac{\operatorname{tr}_gB_f}{n}g\right|^2
=|B|^2+\frac{2\operatorname{tr}_gB_f}{n}\langle B,g\rangle
+\frac{(\operatorname{tr}_gB_f)^2}{n^2}|g|^2
=|B|^2+\frac{(\operatorname{tr}_gB_f)^2}{n}.
\end{aligned}
\]
Therefore
\begin{equation}\label{eq:5.7}
|B|^2+D_m
=
|B_f|^2+\frac{(\operatorname{tr}_gB_f)^2}{m-n}.
\end{equation}
In an orthonormal frame \(e_1,\dots,e_{n-1},\nu\) adapted to the level set,
\[
\begin{aligned}
|B_f|^2
&=\sum_{a,b=1}^{n-1}B_f(e_a,e_b)^2
+2\sum_{a=1}^{n-1}B_f(e_a,\nu)^2
+B_f(\nu,\nu)^2\\
&=|(B_f)^\top|^2+2|B_f(\nu)^\top|^2+B_f(\nu,\nu)^2.
\end{aligned}
\]
Lemma \ref{Lemma 4.1.} gives
\[
|(B_f)^\top|^2
=
|\nabla s^2|^2|\mathring\Pi|^2
+\frac{(\operatorname{tr}_gB_f-B_f(\nu,\nu))^2}{n-1}.
\]
To display the completion of squares, temporarily write \(t=\operatorname{tr}_gB_f\) and \(h=B_f(\nu,\nu)\). The required scalar identity is
\[
\begin{aligned}
h^2+\frac{(t-h)^2}{n-1}+\frac{t^2}{m-n}
&=\frac{n}{n-1}h^2-\frac{2}{n-1}th
+\frac{m-1}{(n-1)(m-n)}t^2\\
&=\frac{m}{m-1}h^2
+\frac{m-1}{(n-1)(m-n)}
\left(t-\frac{m-n}{m-1}h\right)^2.
\end{aligned}
\]
Finally, \(|B_f(\nu)|^2=B_f(\nu,\nu)^2+|B_f(\nu)^\top|^2\), and
\[
2-\frac{m}{m-1}=\frac{m-2}{m-1}.
\]
Substituting into \eqref{eq:5.7} gives \eqref{eq:5.5}. All denominators are positive when \(m>n\ge3\), so this is indeed a decomposition into nonnegative square terms.
\end{proof}

\begin{lemma}\label{Lemma 4.3.} At regular points,
\begin{equation}\label{eq:5.8}
\nabla|\nabla s|^2=\frac{|\nabla s|}{s}B_f(\nu)-\frac k2s\nabla s.
\end{equation}
\end{lemma}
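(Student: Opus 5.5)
The identity is a pointwise consequence of the chain rule for $\operatorname{Hess}s^2$ together with \eqref{eq:3.2}; no integration or pole analysis is involved. The plan is to express $\nabla|\nabla s|^2$ through $\operatorname{Hess}s(\nu,\cdot)$, convert $\operatorname{Hess}s$ into $\operatorname{Hess}s^2$, and then replace $\operatorname{Hess}s^2(\nu,\cdot)$ by $B_f(\nu)$ plus a multiple of $\nu$ whose coefficient is known from \eqref{eq:3.2}.

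\textbf{Step 1: reduce to $\operatorname{Hess}s(\nu,\cdot)$.} At a regular point, where $\nabla s\neq0$, write $\nabla s=|\nabla s|\nu$. Then
\[
\nabla|\nabla s|^2=2\operatorname{Hess}s(\nabla s,\cdot)=2|\nabla s|\operatorname{Hess}s(\nu,\cdot).
\]

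\textbf{Step 2: pass to $\operatorname{Hess}s^2$.} From $\operatorname{Hess}s^2=2s\operatorname{Hess}s+2\,ds\otimes ds$, evaluating in the direction $\nu$ gives
\[
\operatorname{Hess}s^2(\nu,\cdot)=2s\operatorname{Hess}s(\nu,\cdot)+2|\nabla s|\,ds.
\]
Since $s>0$ on the punctured manifold, we may solve for $\operatorname{Hess}s(\nu,\cdot)$. Substituting into Step 1 yields
\[
\nabla|\nabla s|^2=\frac{|\nabla s|}{s}\bigl(\operatorname{Hess}s^2(\nu)-2|\nabla s|\nabla s\bigr).
\]

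\textbf{Step 3: introduce $B_f$.} By the definition \eqref{eq:5.1} of $B_f$ and by \eqref{eq:3.2}, which gives $\Delta_f s^2/m=2|\nabla s|^2-\tfrac k2 s^2$, we have
\[
\operatorname{Hess}s^2(\nu)=B_f(\nu)+\Bigl(2|\nabla s|^2-\frac k2 s^2\Bigr)\nu .
\]
Using $2|\nabla s|\nabla s=2|\nabla s|^2\nu$, the two $2|\nabla s|^2\nu$ terms cancel. The bracket in Step 2 therefore becomes $B_f(\nu)-\tfrac k2 s^2\nu$.

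\textbf{Step 4: conclude.} Multiplying by $|\nabla s|/s$ and using $|\nabla s|\nu=\nabla s$ gives
\[
\nabla|\nabla s|^2=\frac{|\nabla s|}{s}B_f(\nu)-\frac k2\,s\nabla s,
\]
which is \eqref{eq:5.8}.

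There is no serious obstacle in this argument. The only points requiring care are the choice of the weighted trace $\Delta_f s^2/m$ rather than $\Delta s^2/n$, since that is the trace built into $B_f$ and it is what makes \eqref{eq:3.2} applicable, and the restriction to regular points, where $\nu$ is defined and $s>0$.
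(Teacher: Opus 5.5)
Your proposal is correct and follows essentially the same route as the paper. Both arguments use the chain rule $\operatorname{Hess}s^2=2s\operatorname{Hess}s+2\,ds\otimes ds$ to write $2\operatorname{Hess}s(\nabla s)$ in terms of $\operatorname{Hess}s^2(\nabla s)$, split off $B_f(\nu)$ using the weighted trace $\Delta_f s^2/m$, and then apply \eqref{eq:3.2} to reduce the normal coefficient to $-\tfrac k2 s^2$.
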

\begin{proof}
The chain rule gives
\(\operatorname{Hess}s^2=2s\operatorname{Hess}s+2ds\otimes ds\), hence
\[
\begin{aligned}
\nabla|\nabla s|^2
&=2\operatorname{Hess}s(\nabla s)
=\frac1s\operatorname{Hess}s^2(\nabla s)-\frac{2|\nabla s|^2}{s}\nabla s\\
&=\frac{|\nabla s|}{s}B_f(\nu)
+\frac1s\left(\frac{\Delta_f s^2}{m}-2|\nabla s|^2\right)\nabla s.
\end{aligned}
\]
By \eqref{eq:3.2}, the bracket is \(-ks^2/2\), which gives the conclusion. If written using \(B\), the same identity is
\[
\nabla|\nabla s|^2
=
\frac{|\nabla s|}{s}B(\nu)
+\frac1s\left(\frac{\Delta s^2}{n}-\frac{\Delta_f s^2}{m}\right)\nabla s
-\frac k2s\nabla s.
\]
The trace correction term cannot be omitted, because
\[
\frac{\Delta s^2}{n}-2|\nabla s|^2
=
\left(\frac{\Delta s^2}{n}-\frac{\Delta_f s^2}{m}\right)-\frac k2s^2.
\]
\end{proof}

\begin{lemma}\label{Lemma 4.4.} At regular points one has
\begin{equation}\label{eq:5.9}
\nabla v=\frac{2|\nabla s|}{sv}B_f(\nu),
\qquad
|\nabla v|^2=\left(\frac{1}{s^2}-\frac{k}{v^2}\right)|B_f(\nu)|^2.
\end{equation}
\end{lemma}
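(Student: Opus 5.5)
The plan is a direct computation from Lemma \ref{Lemma 4.3.}. Since $v^2=4|\nabla s|^2+ks^2$, we have $2v\nabla v=4\nabla|\nabla s|^2+2ks\nabla s$. Substituting \eqref{eq:5.8} gives
\[
4\nabla|\nabla s|^2=\frac{4|\nabla s|}{s}B_f(\nu)-2ks\nabla s,
\]
so the $ks\nabla s$ terms cancel exactly. What remains is $2v\nabla v=\frac{4|\nabla s|}{s}B_f(\nu)$, which is the first identity in \eqref{eq:5.9}. Division by $v$ is legitimate because $v>0$ on the punctured manifold, and at regular points $\nabla s\neq0$, so $\nu$ is defined.

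For the second identity, square the first to get
\[
|\nabla v|^2=\frac{4|\nabla s|^2}{s^2v^2}|B_f(\nu)|^2 .
\]
Then rewrite $4|\nabla s|^2=v^2-ks^2$, so that
\[
\frac{4|\nabla s|^2}{s^2v^2}=\frac{v^2-ks^2}{s^2v^2}=\frac1{s^2}-\frac{k}{v^2}.
\]
This gives the stated formula.

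There is no real obstacle. The only points to check are the sign and factor conventions: $B_f(\nu)$ is the vector dual to $B_f(\nu,\cdot)$, and Lemma \ref{Lemma 4.3.} is stated with exactly the coefficient $-\tfrac k2 s\nabla s$ that produces the cancellation. I would also note that the trace correction in $B_f$, as opposed to $B$, is essential. If one used $B$ instead, the extra term $\frac1s(\frac{\Delta s^2}{n}-\frac{\Delta_f s^2}{m})\nabla s$ from the remark after Lemma \ref{Lemma 4.3.} would survive, and $\nabla v$ would no longer be a multiple of $B(\nu)$ alone.
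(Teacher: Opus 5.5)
Your proposal is correct and follows the paper's proof essentially verbatim: differentiate $v^2=4|\nabla s|^2+ks^2$, substitute \eqref{eq:5.8} so the $2ks\nabla s$ terms cancel, divide by $2v>0$, then square and use $4|\nabla s|^2/v^2=1-ks^2/v^2$. Your remark on why $B_f$ rather than $B$ is needed also matches the paper's comment after Lemma~\ref{Lemma 4.3.}.
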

\begin{proof}
Differentiate \(v^2=4|\nabla s|^2+ks^2\) and substitute \eqref{eq:5.8}:
\[
2v\nabla v
=
4\nabla|\nabla s|^2+2ks\nabla s
=
\frac{4|\nabla s|}{s}B_f(\nu)-2ks\nabla s+2ks\nabla s.
\]
The two terms cancel; dividing by \(2v\) gives the first formula. Taking squared norms and using
\(4|\nabla s|^2/v^2=1-ks^2/v^2\) gives the second formula.
\end{proof}

\begin{proposition}\label{Proposition 4.5.} At regular points,
\begin{equation}\label{eq:5.10}
\begin{aligned}
\Delta_f v
={}&
\frac{m-2}{s^2}\langle\nabla s^2,\nabla v\rangle
+\frac{4|\nabla s|^2}{v}
\left(|\mathring\Pi|^2+\operatorname{Ric}_f^m(\nu,\nu)-(m-1)k\right)\\
&+\frac{|B_f(\nu)|^2+(m-2)|B_f(\nu)^\top|^2}{(m-1)vs^2}
+\frac{k}{v^3}|B_f(\nu)|^2\\
&+\frac{m-1}{(n-1)(m-n)vs^2}
\left(\operatorname{tr}_gB_f-\frac{m-n}{m-1}B_f(\nu,\nu)\right)^2.
\end{aligned}
\end{equation}
\end{proposition}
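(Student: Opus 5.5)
We will obtain \eqref{eq:5.10} from the drift identity \eqref{eq:3.16} for $v^2=4|\nabla s|^2+ks^2$, converting it into an equation for $v$ itself by the chain rule. The only ingredients are \eqref{eq:3.15}--\eqref{eq:3.16}, the square decomposition of Lemma \ref{Lemma 4.2.}, and the gradient formula of Lemma \ref{Lemma 4.4.}. We work at a regular point of $s$ in $M\setminus\{p\}$, so that $\nu$ is defined and $v>0$.

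\emph{Step 1.} The chain rule gives $\Delta_f(v^2)=2v\Delta_f v+2|\nabla v|^2$ and $\nabla v^2=2v\nabla v$. Insert these into the definition \eqref{eq:3.15} of $\mathcal L_f$ and into \eqref{eq:3.16}. Dividing by $2v$ yields
\[
\Delta_f v=\frac{m-2}{s^2}\langle\nabla s^2,\nabla v\rangle-\frac{|\nabla v|^2}{v}
+\frac{1}{vs^2}\Bigl[|B|^2+D_m+(\operatorname{Ric}_f^m-(m-1)kg)(\nabla s^2,\nabla s^2)\Bigr].
\]
Here $B=\mathring{\operatorname{Hess}}s^2$. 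This already produces the first term of \eqref{eq:5.10}.

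\emph{Step 2.} Expand the bracket. Since $\nabla s^2=2s|\nabla s|\nu$, we have $|\nabla s^2|^2=4s^2|\nabla s|^2$. The curvature term divided by $vs^2$ is therefore $\frac{4|\nabla s|^2}{v}\bigl(\operatorname{Ric}_f^m(\nu,\nu)-(m-1)k\bigr)$. Next replace $|B|^2+D_m$ using \eqref{eq:5.5}. The term $|\nabla s^2|^2|\mathring\Pi|^2/(vs^2)$ becomes $4|\nabla s|^2|\mathring\Pi|^2/v$, which combines with the curvature term as in \eqref{eq:5.10}. The terms involving $|B_f(\nu)^\top|^2$ and the trace square pass through unchanged, with the common factor $1/(vs^2)$.

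\emph{Step 3.} This is the only place where a genuine cancellation occurs. By Lemma \ref{Lemma 4.4.},
\[
-\frac{|\nabla v|^2}{v}=-\frac{|B_f(\nu)|^2}{vs^2}+\frac{k}{v^3}|B_f(\nu)|^2 .
\]
Combining this with the term $\frac{m}{m-1}\frac{|B_f(\nu)|^2}{vs^2}$ from Step 2 leaves $\frac{1}{m-1}\frac{|B_f(\nu)|^2}{vs^2}+\frac{k}{v^3}|B_f(\nu)|^2$. Together with $\frac{m-2}{m-1}\frac{|B_f(\nu)^\top|^2}{vs^2}$ this gives exactly the second line of \eqref{eq:5.10}.

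The main obstacle is bookkeeping rather than any conceptual difficulty. The decomposition must be taken in terms of $B_f$ (not $B$), so that Lemma \ref{Lemma 4.4.}'s expression for $|\nabla v|^2$ cancels against the $\frac{m}{m-1}|B_f(\nu)|^2$ coefficient. One must also keep track of the factor $4|\nabla s|^2$ versus $v^2$ in the $\mathring\Pi$ and curvature terms, since the identity $4|\nabla s|^2=v^2-ks^2$ is not needed there and would obscure the result.
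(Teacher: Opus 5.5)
Your proposal is correct and follows the paper's proof essentially step for step. The paper also starts from \eqref{eq:3.16} via $\mathcal L_f(v^2)=2v\mathcal L_f v+2|\nabla v|^2$, substitutes \eqref{eq:5.5} and \eqref{eq:5.9}, and gets the same coefficient $\frac{m}{(m-1)vs^2}-\frac{1}{vs^2}+\frac{k}{v^3}$ for $|B_f(\nu)|^2$; the only cosmetic difference is that it first isolates $\mathcal L_f v$ and converts to $\Delta_f v$ at the end, whereas you expand the drift term directly in Step 1.
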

\begin{proof}
From \(\mathcal L_f(v^2)=2v\mathcal L_f v+2|\nabla v|^2\) and \eqref{eq:3.16},
\[
\mathcal L_f v
=
\frac{1}{vs^2}\left[
|B|^2+D_m
+(\operatorname{Ric}_f^m-(m-1)kg)(\nabla s^2,\nabla s^2)
\right]
-\frac{|\nabla v|^2}{v}.
\]
Substitute \eqref{eq:5.5} and \eqref{eq:5.9}. The second fundamental form and curvature terms are combined using \(|\nabla s^2|^2=4s^2|\nabla s|^2\). The coefficient of \(|B_f(\nu)|^2\) is
\[
\frac{m}{(m-1)vs^2}-\frac{1}{vs^2}+\frac{k}{v^3}
=
\frac{1}{(m-1)vs^2}+\frac{k}{v^3}.
\]
The other two square terms are unaffected by \(-|\nabla v|^2/v\). Finally, using
\(\Delta_f v=\mathcal L_f v+(m-2)s^{-2}\langle\nabla s^2,\nabla v\rangle\), we obtain \eqref{eq:5.10}.
\end{proof}

\begin{proposition}\label{Proposition 4.6.} Define
\begin{equation}\label{eq:5.11}
\begin{aligned}
\widetilde\beta_f
&:=1+(\beta-1)(m-1)+(m-1)(2-\beta)\frac{ks^2}{v^2}\\
&=\frac{4|\nabla s|^2((m-1)\beta-(m-2))+mks^2}{v^2}.
\end{aligned}
\end{equation}
Then
\begin{equation}\label{eq:5.12}
\begin{aligned}
\mathcal L_f v^\beta
={}&
4\beta v^{\beta-2}|\nabla s|^2
\left(|\mathring\Pi|^2+\operatorname{Ric}_f^m(\nu,\nu)-(m-1)k\right)\\
&+\frac{\beta v^{\beta-2}}{(m-1)s^2}
\left(\widetilde\beta_f|B_f(\nu)|^2+(m-2)|B_f(\nu)^\top|^2\right)\\
&+\frac{\beta(m-1)v^{\beta-2}}{(n-1)(m-n)s^2}
\left(\operatorname{tr}_gB_f-\frac{m-n}{m-1}B_f(\nu,\nu)\right)^2.
\end{aligned}
\end{equation}
If \(\beta\ge(m-2)/(m-1)\), then \(\mathcal L_f v^\beta\ge0\).
\end{proposition}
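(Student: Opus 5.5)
The plan is to apply the chain rule to $\mathcal L_f$ and then substitute the expression for $\mathcal L_f v$ obtained in the proof of Proposition~\ref{Proposition 4.5.}. Since $\mathcal L_f=G^{-2}\operatorname{div}_f(G^2\nabla\cdot)$ is a drift Laplacian, it satisfies
\[
\mathcal L_f v^\beta=\beta v^{\beta-1}\mathcal L_f v+\beta(\beta-1)v^{\beta-2}|\nabla v|^2 .
\]
From that proof we have
\[
\mathcal L_f v=\frac{1}{vs^2}\Bigl[|B|^2+D_m+(\operatorname{Ric}_f^m-(m-1)kg)(\nabla s^2,\nabla s^2)\Bigr]-\frac{|\nabla v|^2}{v}.
\]
Hence
\[
\mathcal L_f v^\beta=\frac{\beta v^{\beta-2}}{s^2}\bigl[\cdots\bigr]+\beta(\beta-2)v^{\beta-2}|\nabla v|^2 .
\]
This reduces the statement to an algebraic rearrangement at regular points, with no integration involved.

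Next, I expand the bracket with Lemma~\ref{Lemma 4.2.} and rewrite $|\nabla s^2|^2=4s^2|\nabla s|^2$. This turns the $\mathring\Pi$ and curvature terms into $4\beta v^{\beta-2}|\nabla s|^2(|\mathring\Pi|^2+\operatorname{Ric}_f^m(\nu,\nu)-(m-1)k)$. The $|B_f(\nu)^\top|^2$ term and the trace square carry over unchanged, with the coefficients stated in \eqref{eq:5.12}. For the $|\nabla v|^2$ term I use Lemma~\ref{Lemma 4.4.}:
\[
|\nabla v|^2=\Bigl(\frac1{s^2}-\frac{k}{v^2}\Bigr)|B_f(\nu)|^2 .
\]
Collecting, the total coefficient of $\beta v^{\beta-2}s^{-2}|B_f(\nu)|^2$ is
\[
\frac{m}{m-1}+(\beta-2)\Bigl(1-\frac{ks^2}{v^2}\Bigr).
\]
Multiplying by $m-1$ gives $1+(\beta-1)(m-1)+(m-1)(2-\beta)ks^2/v^2$, which is exactly $\widetilde\beta_f$. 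The second expression for $\widetilde\beta_f$ in \eqref{eq:5.11} follows by writing $1=(4|\nabla s|^2+ks^2)/v^2$: the $|\nabla s|^2$ coefficient is $(m-1)\beta-(m-2)$, and the $ks^2$ coefficient is $(m-1)\beta-(m-2)+(m-1)(2-\beta)=m$. This coefficient bookkeeping is the only place an error could slip in, and it is the main step to check carefully.

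For the sign, every term in \eqref{eq:5.12} is nonnegative under the hypotheses. We have $\beta>0$ and $v>0$; $|\mathring\Pi|^2\ge0$; $\operatorname{Ric}_f^m(\nu,\nu)\ge(m-1)k$ by \eqref{eq:2.2}; and $m-2>0$, $(n-1)(m-n)>0$. Finally, $\widetilde\beta_f\ge mks^2/v^2>0$ exactly when $(m-1)\beta-(m-2)\ge0$, which is the stated condition $\beta\ge(m-2)/(m-1)$. At critical points of $s$ I do not need separate treatment. Those points form a measure-zero set, as noted in Section~\ref{sec:basic}, and the identity is stated at regular points. If a pointwise statement on all of $M\setminus\{p\}$ is wanted, continuity of $\mathcal L_f v^\beta$ (which is smooth there since $v>0$) gives $\mathcal L_f v^\beta\ge0$ everywhere.
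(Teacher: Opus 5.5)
Your proposal is correct and follows essentially the same route as the paper. Both use the chain rule for the drift operator $\mathcal L_f$ to reduce $\mathcal L_f v^\beta$ to the normal-free identity \eqref{eq:5.13}, which is $\frac{\beta v^{\beta-2}}{s^2}[\cdots]+\beta(\beta-2)v^{\beta-2}|\nabla v|^2$. Both then substitute Lemma~\ref{Lemma 4.2.} and Lemma~\ref{Lemma 4.4.} to obtain the $|B_f(\nu)|^2$ coefficient $\frac{m}{m-1}+(\beta-2)\bigl(1-\frac{ks^2}{v^2}\bigr)$, and extend the sign across the critical set by density of regular points and continuity.
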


\begin{proof}
The weighted chain rule term by term is
\[
\begin{aligned}
\Delta_f v^\beta
&=\Delta v^\beta-\langle\nabla f,\nabla v^\beta\rangle\\
&=\beta v^{\beta-1}\Delta v+\beta(\beta-1)v^{\beta-2}|\nabla v|^2
-\beta v^{\beta-1}\langle\nabla f,\nabla v\rangle\\
&=\beta v^{\beta-1}\Delta_f v+\beta(\beta-1)v^{\beta-2}|\nabla v|^2.
\end{aligned}
\]
The additional drift term is first order, so the chain rule also holds. Cancelling \(\mathcal L_f v\), we get
\begin{equation}\label{eq:5.13}
\begin{aligned}
\mathcal L_f v^\beta
&=\beta v^{\beta-1}\mathcal L_f v+\beta(\beta-1)v^{\beta-2}|\nabla v|^2\\
&=\frac{\beta}{2}v^{\beta-2}\mathcal L_f(v^2)+\beta(\beta-2)v^{\beta-2}|\nabla v|^2\\
&=\frac{\beta v^{\beta-2}}{s^2}\left[
|B|^2+D_m
+(\operatorname{Ric}_f^m-(m-1)kg)(\nabla s^2,\nabla s^2)
+(\beta-2)s^2|\nabla v|^2
\right].
\end{aligned}
\end{equation}
The last formula holds on the entire punctured manifold and does not involve an undefined normal vector. Substituting \eqref{eq:5.5} and \eqref{eq:5.9}, the coefficient of \(|B_f(\nu)|^2\) in the bracket is
\[
\begin{aligned}
&\frac{m}{m-1}+(\beta-2)\left(1-\frac{ks^2}{v^2}\right)\\
&=\frac{m+(m-1)(\beta-2)+(m-1)(2-\beta)ks^2/v^2}{m-1}\\
&=\frac{1+(\beta-1)(m-1)+(m-1)(2-\beta)ks^2/v^2}{m-1}.
\end{aligned}
\]
This is the first formula in \eqref{eq:5.11}. Multiplying by \(v^2=4|\nabla s|^2+ks^2\), the coefficient of \(ks^2\) is
\[
(m-1)\beta-(m-2)+(m-1)(2-\beta)=m,
\]
which gives the second formula. The remaining terms are retained directly, yielding \eqref{eq:5.12}. If \(\beta\ge(m-2)/(m-1)\), then \(\widetilde\beta_f>0\), because \(k>0\) and \(s>0\). By the curvature lower bound and the nonnegativity of the square terms, \(\mathcal L_f v^\beta\ge0\) holds at regular points. The critical set has no interior points, and \eqref{eq:5.13} is continuous, so the inequality holds on the entire punctured manifold.
\end{proof}

\subsection{Monotonicity formulas with parameter}

For $\alpha=\frac{n-2}{m-2}$, when \(\beta(1-\alpha)<2\), define
\begin{equation}\label{eq:5.14}
\begin{aligned}
A_{f,\beta}(r)
&:=I_{f,v^\beta}(r)\\
&=(2\operatorname{sn}_k(r/2))^{1-m}\operatorname{cs}_k(r/2)
\int_{b_f=r}v^\beta|\nabla b_f|e^{-f}d\sigma+\frac{mk}{4}\int_{b_f<r}v^\beta G e^{-f}dV.
\end{aligned}
\end{equation}
By \eqref{eq:2.16}--\eqref{eq:2.17} and scaling derivative estimates for smooth elliptic equations,
\begin{equation}\label{eq:5.15}
\begin{aligned}
v&=\alpha c_p^{-1/(m-2)}\rho^{\alpha-1}(1+o(1)),\\
v^\beta&\asymp\rho^{-\beta(1-\alpha)},\\
|\nabla v^\beta|&=O(\rho^{-\beta(1-\alpha)-1}),\\
|\Delta_f v^\beta|+|\mathcal L_f v^\beta|&=O(\rho^{-\beta(1-\alpha)-2}).
\end{aligned}
\end{equation}
The last estimate can also be judged directly from \eqref{eq:5.13}:
\(v^{\beta-2}s^{-2}=O(\rho^{-\beta(1-\alpha)+2-4\alpha})\), while the bracket is \(O(\rho^{4\alpha-4})\). Using \(|\nabla s^2|/s^2=O(\rho^{-1})\), the definition of \(\mathcal L_f\) gives the same order for \(\Delta_f v^\beta\). Hence this judgment does not require an unproved higher-order differentiation interchange. The radial order of \(v^\beta G e^{-f}dV\) is
\[
\rho^{1-\beta(1-\alpha)}d\rho,
\]
so its pole integral converges iff \(\beta(1-\alpha)<2\). In this range, the volume integral term tends to zero, while the boundary term has order
\[
\rho^{\alpha(1-m)}\rho^{-\beta(1-\alpha)}\rho^{\alpha-1}\rho^{n-1}
=
\rho^{-\beta(1-\alpha)}.
\]
Thus
\begin{equation}\label{eq:5.16}
A_{f,\beta}(r)\asymp
(2\operatorname{sn}_k(r/2))^{-\beta(m-n)/(n-2)}
\longrightarrow+\infty.
\end{equation}

\begin{theorem}\label{Theorem 4.7.} Assume \(\beta>0\) and \(\beta(1-\alpha)<2\). For regular values \(r_1<r_2\),
\begin{equation}\label{eq:5.17}
\left[
(2-m)A_{f,\beta}(r)+2\tan_k(r/2)A_{f,\beta}'(r)
\right]_{r_1}^{r_2}
=
\int_{r_1<b_f<r_2}G\mathcal L_f v^\beta e^{-f}dV.
\end{equation}
If \(\beta\ge(m-2)/(m-1)\), the right-hand side is nonnegative.
\end{theorem}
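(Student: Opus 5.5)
The plan is to apply Lemma~\ref{Lemma 3.2} with \(u=v^\beta\) and then use the divergence theorem on the annulus \(\{r_1<b_f<r_2\}\), exactly as in the proof of Theorem~\ref{Theorem 3.4}. First I rewrite the left-hand side of \eqref{eq:4.9} in terms of \(A_{f,\beta}\) and \(A_{f,\beta}'\). Put \(a=2\operatorname{sn}_k(r/2)\), so that \(da/dr=\operatorname{cs}_k(r/2)\). The product rule gives
\[
\frac{d}{dr}\bigl(a^{2-m}I\bigr)=a^{1-m}\operatorname{cs}_k(r/2)\Bigl[(2-m)I+\frac{a}{\operatorname{cs}_k(r/2)}I'\Bigr].
\]
Since \(a/\operatorname{cs}_k(r/2)=2\tan_k(r/2)\), Lemma~\ref{Lemma 3.2} becomes
\[
(2-m)A_{f,\beta}(r)+2\tan_k(r/2)A_{f,\beta}'(r)
=\int_{b_f=r}\langle\nabla(v^\beta G),\nu\rangle e^{-f}d\sigma
-\frac{m(m-2)k}{4}\int_{b_f<r}v^\beta G\,e^{-f}dV.
\]
The hypotheses of Lemma~\ref{Lemma 3.2} hold: \(v^\beta G\) is absolutely integrable near the pole precisely because \(\beta(1-\alpha)<2\), by the radial order \(\rho^{1-\beta(1-\alpha)}d\rho\) from \eqref{eq:5.15}. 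The formula for \(A_{f,\beta}'\) coming from \eqref{eq:4.6} holds at regular values.

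Next I subtract this identity at \(r_1\) from the identity at \(r_2\). The volume terms leave \(-\frac{m(m-2)k}{4}\int_{r_1<b_f<r_2}v^\beta G\,e^{-f}dV\). The flux terms are the flux of \(\nabla(v^\beta G)\) through the two boundary components of the annulus, with the correct outward orientations (\(\nu\) on \(\{b_f=r_2\}\) and \(-\nu\) on \(\{b_f=r_1\}\)). The annulus is compact, lies away from the pole, and has smooth boundary at regular values. So by \eqref{eq:2.3} this flux equals \(\int_{r_1<b_f<r_2}\Delta_f(v^\beta G)e^{-f}dV\). Combining the two pieces gives
\[
\int_{r_1<b_f<r_2}\Bigl(\Delta_f-\frac{m(m-2)k}{4}\Bigr)(v^\beta G)\,e^{-f}dV .
\]
The identity \((\Delta_f-\frac{m(m-2)k}{4})(Gu)=G\mathcal L_f u\) from the proof of \eqref{eq:3.16} then turns the integrand into \(G\mathcal L_f v^\beta\), which is \eqref{eq:5.17}. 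I will check that identity directly, since it follows from \eqref{eq:2.6} and \eqref{eq:3.15}.

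For the sign, when \(\beta\ge(m-2)/(m-1)\), Proposition~\ref{Proposition 4.6.} gives \(\mathcal L_f v^\beta\ge0\) on the punctured manifold, and \(G>0\), so the right-hand side is nonnegative.

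The main point needing care is that both endpoints must be regular values, so that the level sets are smooth hypersurfaces and the boundary fluxes are genuine surface integrals. Because \(r_1>0\), no pole-flux limit is ever taken, and the integrability assumption \(\beta(1-\alpha)<2\) is used only to make \(A_{f,\beta}\) finite. The remaining bookkeeping is the orientation of \(\nu=\nabla s/|\nabla s|\), which points outward from \(\{b_f<r\}\).
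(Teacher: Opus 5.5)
Your proposal is correct and follows the paper's proof essentially step for step. You apply Lemma~\ref{Lemma 3.2} with \(u=v^\beta\) and divide by \((2\operatorname{sn}_k(r/2))^{1-m}\operatorname{cs}_k(r/2)\) to get \((2-m)A_{f,\beta}+2\tan_k(r/2)A_{f,\beta}'\), subtract at two regular values, and convert the flux difference on the pole-free annulus with the weighted divergence formula. You then use \(\bigl(\Delta_f-\tfrac{m(m-2)k}{4}\bigr)(Gv^\beta)=G\mathcal L_f v^\beta\), and the sign follows from Proposition~\ref{Proposition 4.6.} together with \(G>0\).
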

\begin{proof}
Dividing the identity of Lemma \ref{Lemma 3.2} by
\((2\operatorname{sn}_k(r/2))^{1-m}\operatorname{cs}_k(r/2)\), the left-hand side becomes
\[
\begin{aligned}
\frac{(2\operatorname{sn}_k(r/2))^{m-1}}{\operatorname{cs}_k(r/2)}
\frac{d}{dr}\left((2\operatorname{sn}_k(r/2))^{2-m}A_{f,\beta}(r)\right)
=(2-m)A_{f,\beta}(r)
+\frac{2\operatorname{sn}_k(r/2)}{\operatorname{cs}_k(r/2)}A_{f,\beta}'(r).
\end{aligned}
\]
Subtracting between two level values and using the divergence formula for the boundary difference gives
\[
\int_{r_1<b_f<r_2}
\left(\Delta_f-\frac{m(m-2)k}{4}\right)(Gv^\beta)e^{-f}dV.
\]
The product rule and the Green equation give
\[
\begin{aligned}
\left(\Delta_f-\frac{m(m-2)k}{4}\right)(Gv^\beta)
&=G\Delta_f v^\beta+2\langle\nabla G,\nabla v^\beta\rangle\\
&=G\left(\Delta_f v^\beta+\frac{2-m}{s^2}\langle\nabla s^2,\nabla v^\beta\rangle\right)\\
&=G\mathcal L_f v^\beta.
\end{aligned}
\]
This gives \eqref{eq:5.17}. The annulus avoids the pole, so no additional zero-point flux condition is needed.
\end{proof}

\begin{theorem}\label{Theorem 4.8.} If \(\beta>0\) and \(\beta(1-\alpha)<2\), then
\begin{equation}\label{eq:5.18}
\begin{aligned}
&\frac{(2\operatorname{sn}_k(r/2))^{3-m}}{\operatorname{cs}_k(r/2)}
A_{f,\beta}'(r)
=
-\int_{s>\operatorname{sn}_k(r/2)}G^2\mathcal L_f v^\beta e^{-f}dV\\
&=
-4\beta\int_{s>\operatorname{sn}_k(r/2)}
(2s)^{4-2m}v^{\beta-2}|\nabla s|^2
\left(|\mathring\Pi|^2+\operatorname{Ric}_f^m(\nu,\nu)-(m-1)k\right)e^{-f}dV\\
&\quad-\frac{4\beta}{m-1}\int_{s>\operatorname{sn}_k(r/2)}
(2s)^{2-2m}v^{\beta-2}
\left(\widetilde\beta_f|B_f(\nu)|^2+(m-2)|B_f(\nu)^\top|^2\right)e^{-f}dV\\
&\quad-\frac{4\beta(m-1)}{(n-1)(m-n)}
\int_{s>\operatorname{sn}_k(r/2)}
(2s)^{2-2m}v^{\beta-2}
\left(\operatorname{tr}_gB_f-\frac{m-n}{m-1}B_f(\nu,\nu)\right)^2e^{-f}dV.
\end{aligned}
\end{equation}
In particular, if
\begin{equation}\label{eq:5.19}
\frac{m-2}{m-1}\le\beta<\frac{2}{1-\alpha},
\end{equation}
then \(A_{f,\beta}'\le0\) almost everywhere, and \(A_{f,\beta}\) is nonincreasing.
\end{theorem}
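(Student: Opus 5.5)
The argument applies Proposition~\ref{Proposition 3.5.} to $u=v^\beta$ and then substitutes the pointwise formula of Proposition~\ref{Proposition 4.6.}.

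\emph{Hypotheses of Proposition~\ref{Proposition 3.5.}.} We need $v^\beta\in C^2(M\setminus\{p\})$ and $v^\beta G\in L^1(e^{-f}dV)$. The first holds because $v>0$ on the punctured manifold. For the second, \eqref{eq:5.15} gives the radial order $\rho^{1-\beta(1-\alpha)}\,d\rho$ for $v^\beta G\,e^{-f}dV$, which is integrable at the pole exactly when $\beta(1-\alpha)<2$. Proposition~\ref{Proposition 3.5.} then yields the first equality in \eqref{eq:5.18} at every regular value $r$.

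This step needs no pole flux. The exterior region $\{s>\operatorname{sn}_k(r/2)\}$ is compact and stays away from $p$, so the divergence formula applies without any inner boundary term. By the discussion after \eqref{eq:4.3}, the critical set of $s$ has measure zero and $I_{f,v^\beta}$ is locally absolutely continuous. Hence the identity holds almost everywhere, and a sign on the derivative gives monotonicity on the whole parameter interval.

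\emph{Expanding the integrand.} Multiply \eqref{eq:5.12} by $G^2=(2s)^{4-2m}$ and use $1/s^2=4(2s)^{-2}$, which gives $(2s)^{4-2m}s^{-2}=4(2s)^{2-2m}$. The three groups of terms in \eqref{eq:5.12} then become the three integrals in \eqref{eq:5.18}, with coefficients $4\beta$, $4\beta/(m-1)$ and $4\beta(m-1)/((n-1)(m-n))$. The normal $\nu$, $\mathring\Pi$ and $B_f(\nu)$ are defined only at regular points. Since the critical set is null, the integrals are unaffected. At critical points one may instead use the frame-free expression \eqref{eq:5.13}.

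\emph{Sign.} Assume \eqref{eq:5.19}. The lower bound $\beta\ge(m-2)/(m-1)$ makes $\widetilde\beta_f>0$, by the second form of \eqref{eq:5.11}. The curvature hypothesis gives $\operatorname{Ric}_f^m(\nu,\nu)-(m-1)k\ge0$. All remaining terms are squares with positive coefficients, because $m>n\ge3$ and $\beta>0$. Therefore each integral in \eqref{eq:5.18} is nonnegative, so $A_{f,\beta}'\le0$ almost everywhere, and $A_{f,\beta}$ is nonincreasing.

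\emph{Expected obstacle.} The difficulty is less the computation than checking that no hidden pole contribution enters. On this I would lean on the fact that Proposition~\ref{Proposition 3.5.} only integrates over the exterior region. The remaining check is that $A_{f,\beta}$ is finite, which needs $\beta(1-\alpha)<2$, and that the defining boundary integral is smooth in $r$ at regular values. Both follow from the asymptotics \eqref{eq:5.15}--\eqref{eq:5.16}.
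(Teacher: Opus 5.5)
Your proposal is correct and follows essentially the same route as the paper. You apply Proposition~\ref{Proposition 3.5.} with $u=v^\beta$ on the pole-free exterior region, substitute \eqref{eq:5.12} using $G^2=(2s)^{4-2m}$ and $G^2/s^2=4(2s)^{2-2m}$, and read off the sign from $\widetilde\beta_f>0$ and the curvature bound. The one step you leave implicit, which the paper states, is that the prefactor $(2\operatorname{sn}_k(r/2))^{3-m}/\operatorname{cs}_k(r/2)$ is positive on the admissible interval \eqref{eq:2.10}; this is what transfers the sign of the right-hand side to $A_{f,\beta}'$.
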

\begin{proof}
Taking \(u=v^\beta\) in Proposition \ref{Proposition 3.5.} immediately gives the first line. For the exterior integral, \(p\) does not belong to the integration region, so the divergence argument is legitimate regardless of whether \(v^\beta\) is continuous at the pole. Substituting \eqref{eq:5.12} and using
\[
G^2=(2s)^{4-2m},\qquad
\frac{G^2}{s^2}=4(2s)^{2-2m}
\]
term by term gives the three-term expansion. The normal-vector expressions are used at regular points; the critical set has measure zero, and the integral value is determined by \eqref{eq:5.13}. On the interval \eqref{eq:2.10}, the coefficient in front of the derivative on the left-hand side is positive, and Proposition \ref{Proposition 4.6.} gives that the right-hand side is nonpositive, so monotonicity follows.
\end{proof}

For parameters in \eqref{eq:5.19}, one cannot rewrite \eqref{eq:5.17} as a nondecreasing formula with an arbitrary finite initial value. Indeed, by \eqref{eq:5.16}, for every finite \(C\), for sufficiently small \(r\) one has \(A_{f,\beta}(r)>C\), and
\[
\begin{aligned}
&\frac{d}{dr}\left((2\operatorname{sn}_k(r/2))^{2-m}(A_{f,\beta}-C)\right)\\
&=(2-m)(2\operatorname{sn}_k(r/2))^{1-m}\operatorname{cs}_k(r/2)(A_{f,\beta}-C)
+(2\operatorname{sn}_k(r/2))^{2-m}A_{f,\beta}'<0
\end{aligned}
\]
almost everywhere. Here the annular formula is retained because the genuine pole divergence requires it.

\begin{proposition}\label{Proposition 4.9.} If the following integrals are absolutely convergent, define
\begin{equation}\label{eq:5.20}
\begin{aligned}
W_{f,u}(r)
:={}&
(2\operatorname{sn}_k(r/2))^{2-m}
\int_{b_f<r}\frac{u}{(2s)^2}
\left(4|\nabla s|^2-\frac{mk}{m-2}s^2\right)e^{-f}dV+\frac{mk}{4(m-2)}\int_{b_f<r}uG e^{-f}dV.
\end{aligned}
\end{equation}
Then
\begin{equation}\label{eq:5.21}
W_{f,u}'(r)
=
\frac{\operatorname{cs}_k(r/2)}{2\operatorname{sn}_k(r/2)}
\left(I_{f,u}(r)-(m-2)W_{f,u}(r)\right).
\end{equation}
Moreover, \(W_{f,1}=\kappa/(m-2)^2\).
\end{proposition}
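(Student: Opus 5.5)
The plan is to copy the proof of Proposition \ref{Proposition 3.8.}: differentiate each piece of \(W_{f,u}\) with the product rule and the coarea formula, and then use the level-set relations \(G=(2\operatorname{sn}_k(r/2))^{2-m}\), \(s=\operatorname{sn}_k(r/2)\) and \(4|\nabla s|^2=\operatorname{cs}_k^2(r/2)|\nabla b_f|^2\) on \(\{b_f=r\}\). As in Section~\ref{sec:basic}, I would differentiate at regular values and pass to almost-everywhere identities through locally absolutely continuous representatives.

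\textbf{Step 1: differentiate.} Write \(a=2\operatorname{sn}_k(r/2)\), so \(da/dr=\operatorname{cs}_k(r/2)\). Set
\[
X(r)=\int_{b_f<r}\frac{u}{(2s)^2}\Bigl(4|\nabla s|^2-\tfrac{mk}{m-2}s^2\Bigr)e^{-f}dV,
\qquad Y(r)=\int_{b_f<r}uGe^{-f}dV,
\]
and \(c=mk/(4(m-2))\), so that \(W_{f,u}=a^{2-m}X+cY\). Differentiating gives three terms:
\begin{itemize}
\item \((2-m)a^{1-m}\operatorname{cs}_k X\);
\item the boundary integral \(a^{2-m}\int_{b_f=r}u\,a^{-2}\bigl(4|\nabla s|^2-\tfrac{mk}{m-2}s^2\bigr)|\nabla b_f|^{-1}e^{-f}d\sigma\);
\item \(c\int_{b_f=r}uG|\nabla b_f|^{-1}e^{-f}d\sigma\).
\end{itemize}

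\textbf{Step 2: cancel the \(|\nabla b_f|^{-1}\) terms.} On the level set, \(s^2=a^2/4\) and \(G=a^{2-m}\). Hence the \(-\frac{mk}{m-2}s^2\) part of the second term is \(-\frac{mk}{4(m-2)}a^{2-m}\int u|\nabla b_f|^{-1}\), which cancels the third term exactly. The remaining boundary piece is
\[
a^{-m}\operatorname{cs}_k^2\int_{b_f=r}u|\nabla b_f|e^{-f}d\sigma=\frac{\operatorname{cs}_k}{a}\Bigl(I_{f,u}-\frac{mk}{4}Y\Bigr),
\]
by the definition \eqref{eq:4.1}.

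\textbf{Step 3: collect.} Using \(a^{2-m}X=W_{f,u}-cY\), the first term equals \(\frac{\operatorname{cs}_k}{a}(2-m)(W_{f,u}-cY)\). Adding everything,
\[
W'_{f,u}=\frac{\operatorname{cs}_k}{a}\Bigl[I_{f,u}-(m-2)W_{f,u}+\Bigl((m-2)c-\frac{mk}{4}\Bigr)Y\Bigr].
\]
The coefficient of \(Y\) is \((m-2)c-mk/4=0\), which gives \eqref{eq:5.21}. As in Proposition \ref{Proposition 3.8.}, no integration by parts through the pole is used.

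\textbf{Step 4: the case \(u=1\).} By \eqref{eq:4.17}, \(I_{f,1}\equiv\kappa/(m-2)\). Equation \eqref{eq:5.21} can be rewritten as
\[
\frac{d}{dr}\bigl(a^{m-2}W_{f,1}\bigr)=a^{m-3}\operatorname{cs}_k\,\frac{\kappa}{m-2}.
\]
Integrating gives \(W_{f,1}=\kappa/(m-2)^2+Ca^{2-m}\) for some constant \(C\).

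\textbf{Step 5: show \(C=0\); this is the main obstacle.} It suffices to show \(a^{m-2}W_{f,1}=X+c\,a^{m-2}Y\to0\) as \(r\downarrow0\). This needs absolute integrability of both integrands at the pole.
\begin{itemize}
\item For \(X\): by \eqref{eq:2.15}--\eqref{eq:2.16}, \((2s)^{-2}\bigl|4|\nabla s|^2-\tfrac{mk}{m-2}s^2\bigr|=O(\rho^{-2\alpha}\rho^{2\alpha-2})=O(\rho^{-2})\). Against \(\rho^{n-1}d\rho\) this is integrable because \(n\ge3\). So \(X\) is an integral of an \(L^1\) function over shrinking sets, and \(X\to0\).
\item For \(Y\): \(G\) is integrable, so \(Y\to0\), and \(a^{m-2}\) stays bounded.
\end{itemize}
Therefore \(C=0\), and \(W_{f,1}=\kappa/(m-2)^2\).
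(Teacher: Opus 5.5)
Your proposal is correct and follows essentially the same route as the paper. Both arguments differentiate by the product rule and coarea formula, cancel the potential boundary terms using $G=(2\operatorname{sn}_k(r/2))^{2-m}$ on the level set, let the $uG$ volume integrals cancel in $I_{f,u}-(m-2)W_{f,u}$, and then handle $u=1$ with the integrating factor $(2\operatorname{sn}_k(r/2))^{m-2}$ and $I_{f,1}=\kappa/(m-2)$; your Step 5 just writes out the $O(\rho^{-2})$ integrability check that the paper uses to set the integration constant to zero.
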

\begin{proof}
The coarea formula and the product rule give
\[
\begin{aligned}
W_{f,u}'
={}&
(2-m)(2\operatorname{sn}_k(r/2))^{1-m}\operatorname{cs}_k(r/2)
\int_{b_f<r}\frac{u}{(2s)^2}
\left(4|\nabla s|^2-\frac{mk}{m-2}s^2\right)e^{-f}dV\\
&+(2\operatorname{sn}_k(r/2))^{2-m}
\int_{b_f=r}\frac{u}{(2s)^2|\nabla b_f|}
\left(4|\nabla s|^2-\frac{mk}{m-2}s^2\right)e^{-f}d\sigma\\
&+\frac{mk}{4(m-2)}\int_{b_f=r}\frac{uG}{|\nabla b_f|}e^{-f}d\sigma.
\end{aligned}
\]
On the boundary \(G=(2\operatorname{sn}_k(r/2))^{2-m}\), so the negative potential term in the second line is
\[
-\frac{mk}{4(m-2)}(2\operatorname{sn}_k(r/2))^{2-m}
\int_{b_f=r}\frac{u}{|\nabla b_f|}e^{-f}d\sigma,
\]
which cancels exactly with the third line. The second line leaves
\[
(2\operatorname{sn}_k(r/2))^{-m}\operatorname{cs}_k^2(r/2)
\int_{b_f=r}u|\nabla b_f|e^{-f}d\sigma.
\]
In \(I_{f,u}-(m-2)W_{f,u}\), the volume integrals involving \(uG\) also cancel:
\[
\frac{mk}{4}-(m-2)\frac{mk}{4(m-2)}=0.
\]
The remaining boundary term and volume integral term are exactly the same as the above derivative, which gives \eqref{eq:5.21}.

For \(u=1\), \(|\nabla s|^2/s^2=O(\rho^{-2})\), so \(n\ge3\) guarantees convergence of the definition. Multiplying by the integrating factor \((2\operatorname{sn}_k(r/2))^{m-2}\) and using \(I_{f,1}=\kappa/(m-2)\), we get
\[
\frac{d}{dr}\left((2\operatorname{sn}_k(r/2))^{m-2}W_{f,1}\right)
=
\frac{\kappa}{m-2}(2\operatorname{sn}_k(r/2))^{m-3}\operatorname{cs}_k(r/2).
\]
By definition, the quantity in parentheses on the left tends to zero as \(r\downarrow0\). After integration, \(W_{f,1}=\kappa/(m-2)^2\).
\end{proof}

\subsection{The volume quantities  and their integrability conditions}

Define
\begin{equation}\label{eq:5.22}
V_{f,\beta}:=W_{f,v^\beta}.
\end{equation}
Thus \(A_{f,2}=A_f\), but \(V_{f,2}\) is generally not equal to \(V_f\) from Section 3: the former uses \(W\), while the latter uses \(J\), and their integral kernels and powers are different.

The first term of \(W\) has a positive leading term near the pole, with radial order
\[
\rho^{-\beta(1-\alpha)}\rho^{-2}\rho^{n-1}d\rho
=
\rho^{n-3-\beta(1-\alpha)}d\rho.
\]
Its defining integral converges iff \(\beta(1-\alpha)<n-2\). Combining this with the condition for \(v^\beta G\), we obtain
\begin{equation}\label{eq:5.23}
\beta(1-\alpha)<\min\{2,n-2\}.
\end{equation}
In this range, by definition we also have
\begin{equation}\label{eq:5.24}
(2\operatorname{sn}_k(r/2))^{m-2}V_{f,\beta}(r)\longrightarrow0.
\end{equation}
This is the zero limit of a convergent volume integral over a shrinking region; it does not require \(V_{f,\beta}\) itself to have a finite limit.

\begin{theorem}\label{Theorem 4.10.} If \(\beta>0\) and \eqref{eq:5.23} holds, then
\begin{equation}\label{eq:5.25}
\begin{aligned}
&A_{f,\beta}'(r)-2(m-2)V_{f,\beta}'(r)\\
&=
\frac{\operatorname{cs}_k(r/2)}{(2\operatorname{sn}_k(r/2))^{m-1}}
\int_{b_f<r}\mathcal L_f v^\beta e^{-f}dV\\
&=
\frac{4\beta\operatorname{cs}_k(r/2)}{(2\operatorname{sn}_k(r/2))^{m-1}}
\int_{b_f<r}v^{\beta-2}|\nabla s|^2
\left(|\mathring\Pi|^2+\operatorname{Ric}_f^m(\nu,\nu)-(m-1)k\right)e^{-f}dV\\
&\quad+\frac{\beta\operatorname{cs}_k(r/2)}{(m-1)(2\operatorname{sn}_k(r/2))^{m-1}}
\int_{b_f<r}\frac{v^{\beta-2}}{s^2}
\left(\widetilde\beta_f|B_f(\nu)|^2+(m-2)|B_f(\nu)^\top|^2\right)e^{-f}dV\\
&\quad+\frac{\beta(m-1)\operatorname{cs}_k(r/2)}
{(n-1)(m-n)(2\operatorname{sn}_k(r/2))^{m-1}}
\int_{b_f<r}\frac{v^{\beta-2}}{s^2}
\left(\operatorname{tr}_gB_f-\frac{m-n}{m-1}B_f(\nu,\nu)\right)^2e^{-f}dV.
\end{aligned}
\end{equation}
\end{theorem}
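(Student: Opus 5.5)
The plan is to prove the first equality in \eqref{eq:5.25} as an exact integral identity, valid for any admissible $u$, and then specialize to $u=v^\beta$. Write $a=2\operatorname{sn}_k(r/2)$ and $\operatorname{ct}=\operatorname{ct}_k(r/2)$. By Proposition~\ref{Proposition 4.9.}, $2W_{f,u}'=\operatorname{ct}\,(I_{f,u}-(m-2)W_{f,u})$. Hence
\[
I_{f,u}'-2(m-2)W_{f,u}'=I_{f,u}'-(m-2)\operatorname{ct}\,I_{f,u}+(m-2)^2\operatorname{ct}\,W_{f,u}.
\]
By \eqref{eq:4.6}, $I_{f,u}'=a^{1-m}\operatorname{cs}_k(r/2)\int_{b_f=r}\langle\nabla u,\nu\rangle e^{-f}d\sigma$. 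So it suffices to show that $a^{1-m}\operatorname{cs}_k(r/2)\int_{b_f<r}\mathcal L_f u\,e^{-f}dV$ equals this flux term plus the two correction terms.

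To do this, split $\mathcal L_f u=\Delta_f u+2(2-m)\langle\nabla\log s,\nabla u\rangle$ using \eqref{eq:3.15}. Integrate $\Delta_f u$ over $\{b_f<r\}\setminus B_\epsilon(p)$ by \eqref{eq:2.3}; this produces exactly the flux term. For the drift term, use
\[
\operatorname{div}_f(u\nabla\log s)=\langle\nabla u,\nabla\log s\rangle+u\,\Delta_f\log s .
\]
By \eqref{eq:3.1}, $\Delta_f\log s=(m-2)|\nabla s|^2/s^2-mk/4$. On the outer boundary we have $s=\operatorname{sn}_k(r/2)$ and $\langle\nabla s,\nu\rangle=\operatorname{cs}_k(r/2)|\nabla b_f|/2$, so the boundary contribution is $\tfrac{\operatorname{ct}}{2}\int_{b_f=r}u|\nabla b_f|e^{-f}d\sigma$. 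After multiplying by $a^{1-m}\operatorname{cs}_k(r/2)$ and the factor $2(2-m)$, this is $-(m-2)\operatorname{ct}$ times the boundary part of $I_{f,u}$.

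The volume contribution is
\[
\int u\Bigl((m-2)\tfrac{|\nabla s|^2}{s^2}-\tfrac{mk}{4}\Bigr)e^{-f}dV
=(m-2)\int\frac{u}{(2s)^2}\Bigl(4|\nabla s|^2-\tfrac{mk}{m-2}s^2\Bigr)e^{-f}dV .
\]
This is $(m-2)a^{m-2}$ times the first part of $W_{f,u}$, and it reproduces the term $(m-2)^2\operatorname{ct}\,W_{f,u}$. The leftover $uG$ volume integrals then come with coefficients $+(m-2)\operatorname{ct}\,mk/4$ (from $-(m-2)\operatorname{ct}\,I_{f,u}$) and $-(m-2)^2\operatorname{ct}\,mk/(4(m-2))$ (from $(m-2)^2\operatorname{ct}\,W_{f,u}$). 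These cancel, exactly as in the proof of Proposition~\ref{Proposition 4.9.}.

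The main obstacle is justifying the limit $\epsilon\downarrow0$ for $u=v^\beta$. I would use the asymptotics \eqref{eq:5.15}. The inner fluxes of $\nabla u$ and of $u\nabla\log s$ are $O(\epsilon^{n-2-\beta(1-\alpha)})$, since $|\nabla\log s|=O(\rho^{-1})$. The integrands $\mathcal L_f v^\beta$ and $u|\nabla s|^2/s^2$ are $O(\rho^{-\beta(1-\alpha)-2})$, which gives radial order $\rho^{n-3-\beta(1-\alpha)}d\rho$. Both the flux decay and the integrability therefore hold precisely under $\beta(1-\alpha)<n-2$, which is part of \eqref{eq:5.23}. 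The other part, $\beta(1-\alpha)<2$, guarantees that $I_{f,v^\beta}$ is finite and that \eqref{eq:5.21} applies. I would also note that, by Proposition~\ref{Proposition 4.6.}, $\mathcal L_f v^\beta\ge0$, so its integral is absolutely convergent. Finally, substituting the pointwise expansion \eqref{eq:5.12} into $\int_{b_f<r}\mathcal L_f v^\beta e^{-f}dV$ gives the three-term form. This substitution is valid because the critical set has measure zero and the integrand is given globally by \eqref{eq:5.13}.
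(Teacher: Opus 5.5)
Your proposal is correct and follows essentially the same route as the paper. You split $\mathcal L_f$ into $\Delta_f$ plus the drift, turn $\int_{b_f<r}\Delta_f v^\beta e^{-f}dV$ into $A_{f,\beta}'$ via Lemma~\ref{Lemma3.1}, integrate the drift term by parts with $\operatorname{div}_f(\nabla\log s)$ (the paper uses $\nabla s^2/s^2=2\nabla\log s$), cancel the $uG$ terms via \eqref{eq:5.21}, and justify the pole limits by \eqref{eq:5.15} under \eqref{eq:5.23}.

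One small correction: your appeal to Proposition~\ref{Proposition 4.6.} for convergence needs $\beta\ge(m-2)/(m-1)$, which this theorem does not assume. Nothing is lost, because your order bound $O(\rho^{n-3-\beta(1-\alpha)})$ already gives absolute convergence. You should apply that bound to each of the three expanded integrals as well, since $\widetilde\beta_f$ can be negative for small $\beta$.
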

\begin{proof}
First check each integration by parts. By \eqref{eq:5.15},
\[
\int_{\rho=\epsilon}|\nabla v^\beta|e^{-f}d\sigma
=
O(\epsilon^{n-2-\beta(1-\alpha)}),
\]
\[
\int_{\rho=\epsilon}v^\beta\frac{|\nabla s^2|}{s^2}e^{-f}d\sigma
=
O(\epsilon^{n-2-\beta(1-\alpha)}).
\]
Both tend to zero under \eqref{eq:5.23}. The absolute value of \(\Delta_f v^\beta\) and \(\mathcal L_f v^\beta\), multiplied by the radial volume factor, is
\(O(\rho^{n-3-\beta(1-\alpha)}d\rho)\), so both are integrable. Therefore Lemma \ref {Lemma3.1} gives
\begin{equation}\label{eq:5.26}
\int_{b_f<r}\Delta_f v^\beta e^{-f}dV
=
\frac{(2\operatorname{sn}_k(r/2))^{m-1}}{\operatorname{cs}_k(r/2)}
A_{f,\beta}'(r).
\end{equation}
Next compute the drift integral separately. The product rule gives
\begin{equation}\label{eq:5.27}
\begin{aligned}
\operatorname{div}_f\left(\frac{\nabla s^2}{s^2}\right)
&=\frac{\Delta_f s^2}{s^2}-\frac{|\nabla s^2|^2}{s^4}\\
&=\frac{2(m-2)|\nabla s|^2}{s^2}-\frac{mk}{2}\\
&=\frac{m-2}{2s^2}v^2-(m-1)k\\
&=\frac{2(m-2)}{(2s)^2}
\left(4|\nabla s|^2-\frac{mk}{m-2}s^2\right).
\end{aligned}
\end{equation}
Thus
\[
\left\langle\frac{\nabla s^2}{s^2},\nabla v^\beta\right\rangle
=
\operatorname{div}_f\left(v^\beta\frac{\nabla s^2}{s^2}\right)
-\frac{2(m-2)v^\beta}{(2s)^2}
\left(4|\nabla s|^2-\frac{mk}{m-2}s^2\right).
\]
After integration and cancellation of the already estimated inner boundary, the outer boundary uses
\[
\left\langle\frac{\nabla s^2}{s^2},\nu\right\rangle
=
\frac{\operatorname{cs}_k(r/2)}{\operatorname{sn}_k(r/2)}|\nabla b_f|,
\]
and we obtain
\[
\begin{aligned}
&\int_{b_f<r}\left\langle\frac{\nabla s^2}{s^2},\nabla v^\beta\right\rangle e^{-f}dV\\
&=
\frac{\operatorname{cs}_k(r/2)}{\operatorname{sn}_k(r/2)}
\int_{b_f=r}v^\beta|\nabla b_f|e^{-f}d\sigma
-2(m-2)\int_{b_f<r}\frac{v^\beta}{(2s)^2}
\left(4|\nabla s|^2-\frac{mk}{m-2}s^2\right)e^{-f}dV\\
&=
2(2\operatorname{sn}_k(r/2))^{m-2}
\left(A_{f,\beta}-\frac{mk}{4}\int_{b_f<r}v^\beta G e^{-f}dV\right)\\
&\quad-2(m-2)(2\operatorname{sn}_k(r/2))^{m-2}
\left(V_{f,\beta}-\frac{mk}{4(m-2)}\int_{b_f<r}v^\beta G e^{-f}dV\right)\\
&=
2(2\operatorname{sn}_k(r/2))^{m-2}
\left(A_{f,\beta}-(m-2)V_{f,\beta}\right)\\
&=
\frac{2(2\operatorname{sn}_k(r/2))^{m-1}}{\operatorname{cs}_k(r/2)}V_{f,\beta}'(r).
\end{aligned}
\]
The two volume integrals involving \(G\) cancel in the penultimate step; the last step uses \eqref{eq:5.21}. Substituting this result and \eqref{eq:5.26} into the definition of \(\mathcal L_f\),
\[
\begin{aligned}
\int_{b_f<r}\mathcal L_f v^\beta e^{-f}dV
&=\int_{b_f<r}\Delta_f v^\beta e^{-f}dV
-(m-2)\int_{b_f<r}\left\langle\frac{\nabla s^2}{s^2},\nabla v^\beta\right\rangle e^{-f}dV\\
&=\frac{(2\operatorname{sn}_k(r/2))^{m-1}}{\operatorname{cs}_k(r/2)}
\left(A_{f,\beta}'-2(m-2)V_{f,\beta}'\right).
\end{aligned}
\]
This gives the first formula of \eqref{eq:5.25}. Substituting the three lines of \eqref{eq:5.12} term by term gives its complete expansion. The worst radial order of each term is at most
\(\rho^{n-3-\beta(1-\alpha)}\), so the expansion also consists of absolutely convergent integrals.
\end{proof}

\begin{proposition}\label{Proposition 4.11.} Assume that
\begin{equation}\label{eq:5.28}
\frac{m-2}{m-1}\le\beta<
\frac{\min\{2,n-2\}}{1-\alpha}.
\end{equation}
Then at every admissible regular radius,
\begin{equation}\label{eq:5.29}
A_{f,\beta}'(r)-2(m-2)V_{f,\beta}'(r)>0.
\end{equation}
Hence in the present smooth point-source setting with \(m>n\), one cannot regard equality in this difference as a realizable sphere rigidity case.
\end{proposition}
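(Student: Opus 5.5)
By Theorem~\ref{Theorem 4.10.}, the left-hand side of \eqref{eq:5.29} equals
\[
\frac{\operatorname{cs}_k(r/2)}{(2\operatorname{sn}_k(r/2))^{m-1}}\int_{b_f<r}\mathcal L_f v^\beta\,e^{-f}dV .
\]
Under \eqref{eq:5.28}, Proposition~\ref{Proposition 4.6.} gives $\mathcal L_f v^\beta\ge0$ pointwise on the punctured manifold, and $\operatorname{cs}_k(r/2)>0$ on the interval \eqref{eq:2.10}. So the difference is already $\ge0$. Strict positivity will follow once we show that $\mathcal L_f v^\beta$ is not almost everywhere zero on $\{b_f<r\}$. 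Since $\mathcal L_f v^\beta$ is continuous away from $p$, it suffices to find one point of $\{b_f<r\}\setminus\{p\}$ where it is strictly positive. The natural place to look is a small punctured neighbourhood of the pole, which lies in every sublevel set $\{b_f<r\}$.

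We first use the bracket form \eqref{eq:5.13}:
\[
\mathcal L_f v^\beta=\frac{\beta v^{\beta-2}}{s^2}\Big[|B|^2+D_m+(\operatorname{Ric}_f^m-(m-1)kg)(\nabla s^2,\nabla s^2)+(\beta-2)s^2|\nabla v|^2\Big].
\]
After substituting \eqref{eq:5.5} and \eqref{eq:5.9}, the bracket becomes the sum of nonnegative squares appearing in \eqref{eq:5.12}, multiplied by $v^2$. In particular, it dominates the term
\[
\frac{m-1}{(n-1)(m-n)}\Big(\operatorname{tr}_gB_f-\frac{m-n}{m-1}B_f(\nu,\nu)\Big)^2
\]
together with the $\widetilde\beta_f|B_f(\nu)|^2$ term. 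So it suffices to show that one of these is nonzero near $p$.

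To do this we compute leading orders from the pole asymptotics \eqref{eq:2.11} and \eqref{eq:2.15}. To leading order, $s^2=C\rho^{2\alpha}$ with $C>0$. Then
\[
\operatorname{Hess}s^2\approx 2\alpha C\rho^{2\alpha-2}\big[(2\alpha-1)\,d\rho\otimes d\rho+(g-d\rho\otimes d\rho)\big],
\qquad
\Delta s^2\approx 2\alpha C\rho^{2\alpha-2}(2\alpha+n-2).
\]
The drift term $\langle\nabla f,\nabla s^2\rangle$ is of lower order. Since $\Delta_f s^2=2m(|\nabla s|^2-ks^2/4)\approx 2m\alpha^2C\rho^{2\alpha-2}$, we get
\[
B_f\approx 2\alpha C\rho^{2\alpha-2}\big[(2\alpha-1-\alpha)\,\nu\otimes\nu+(1-\alpha)g^\top\big].
\]
This gives
\[
B_f(\nu,\nu)\approx 2\alpha C\rho^{2\alpha-2}(\alpha-1),
\qquad
\operatorname{tr}_gB_f\approx 2\alpha C\rho^{2\alpha-2}(1-\alpha)(n-2).
\]
Hence
\[
\operatorname{tr}_gB_f-\frac{m-n}{m-1}B_f(\nu,\nu)\approx 2\alpha C(1-\alpha)\rho^{2\alpha-2}\Big(n-2+\frac{m-n}{m-1}\Big)\neq0 .
\]
This holds because $\alpha<1$ when $m>n$; it is also consistent with $D_m$ having a positive leading coefficient, as noted at the end of Section~\ref{sec:basic}. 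Independently, $B_f(\nu,\nu)\neq0$ and $\widetilde\beta_f>0$. Therefore $\mathcal L_f v^\beta$ is strictly positive, of order $\rho^{-\beta(1-\alpha)-2}$, on a punctured ball around $p$, and the integral in Theorem~\ref{Theorem 4.10.} is strictly positive.

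The main obstacle is justifying that these leading-order Hessian computations are legitimate. This requires the $j=2$ derivative asymptotics in \eqref{eq:2.11}, so that $\operatorname{Hess}s^2$ equals its model value up to $o(\rho^{2\alpha-2})$ uniformly on annuli. It also requires that the $o(1)$ errors cannot cancel a leading coefficient that is a fixed positive multiple of $(1-\alpha)$. Once this is in place, the final sentence of the proposition follows at once. Equality in the difference would force the integral in \eqref{eq:5.25} to vanish, and this is impossible at every admissible regular radius because the pole neighbourhood always contributes a positive amount.
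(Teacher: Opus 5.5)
Your proof is correct, but it reaches strictness by a different route from the paper's.

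\textbf{The paper's route.} The paper argues by contradiction and never computes a Hessian. If the integral in \eqref{eq:5.25} vanished, then nonnegativity and continuity would force the term $\widetilde\beta_f|B_f(\nu)|^2$ to vanish on a connected punctured neighbourhood of $p$ where $\nabla s\neq0$. Since $\widetilde\beta_f>0$, this gives $B_f(\nu)=0$. Lemma~\ref{Lemma 4.4.} then gives $\nabla v=0$, so $v$ would be constant there. This contradicts $v\asymp\rho^{\alpha-1}\to\infty$.

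\textbf{Your route.} You instead expand $B_f$ to leading order using the $j=2$ asymptotics in \eqref{eq:2.11}. You then show directly that $B_f(\nu,\nu)\approx 2\alpha C(\alpha-1)\rho^{2\alpha-2}$ and that the trace square is nonzero near $p$. Your coefficients check out. For instance, your $\operatorname{tr}_gB_f$ agrees with $\Delta s^2-\frac nm\Delta_fs^2\approx 2\alpha^2C(m-n)\rho^{2\alpha-2}$, because $\alpha(m-n)=(1-\alpha)(n-2)$.

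\textbf{Comparison.} The paper's argument is softer. It needs only the first-order behaviour of $G$ (to get $\nabla s\neq0$ and the blow-up of $v$) together with connectedness of the punctured ball. Yours uses more of the standing analytic input, namely second-order asymptotics, but it returns more:
\begin{itemize}
\item a pointwise lower bound for $\mathcal L_fv^\beta$ by a positive multiple of $\rho^{-\beta(1-\alpha)-2}$ near the pole, hence a quantitative lower bound for the difference derivative;
\item an explicit display of the obstruction as proportional to $1-\alpha$, so that it disappears exactly in the unweighted case $m=n$.
\end{itemize}
The two arguments detect the same phenomenon at different orders. Indeed $\nabla v=\frac{2|\nabla s|}{sv}B_f(\nu)$, and your leading term $B_f(\nu,\nu)\propto(\alpha-1)$ reproduces exactly the radial derivative of $v\approx\alpha c_p^{-1/(m-2)}\rho^{\alpha-1}$.

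\textbf{A bookkeeping slip.} The bracket in \eqref{eq:5.13} equals $s^2\mathcal L_fv^\beta/(\beta v^{\beta-2})$. That is the positive combination of squares in \eqref{eq:5.12} itself, not that combination multiplied by $v^2$. This does not affect your sign argument.
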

\begin{proof}
By \eqref{eq:5.11}, \(\widetilde\beta_f>0\) on \(M\setminus\{p\}\). If the right-hand side of \eqref{eq:5.25} were zero, then its nonnegative integrand would vanish almost everywhere in the interior. In a sufficiently small punctured neighborhood of \(p\), \(\nabla s\ne0\), all relevant tensors are smooth, and therefore the nonnegative square term in the second line implies \(B_f(\nu)=0\) everywhere. Lemma \ref{Lemma 4.4.} gives \(\nabla v=0\). The punctured neighborhood is connected, so \(v\) is constant there. However, by \eqref{eq:5.15}, \(v\asymp\rho^{\alpha-1}\to+\infty\), a contradiction. This proves that the interior integral is strictly positive. The exterior formula is treated separately in Section~\ref{sec:rigidity}, where strictness follows from a maximum point of $s$.
\end{proof}

\begin{corollary}\label{Corollary 4.12.} Under \eqref{eq:5.28}, \(V_{f,\beta}\) is nonincreasing, \(A_{f,\beta}-2(m-2)V_{f,\beta}\) is nondecreasing, and
\begin{equation}\label{eq:5.30}
A_{f,\beta}(r)\le(m-2)V_{f,\beta}(r).
\end{equation}
\end{corollary}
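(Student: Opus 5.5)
The plan copies the proof of Theorem \ref{Theorem 3.11.} given earlier (the integral-average argument), with $I_{f,v^\beta},W_{f,v^\beta}$ in place of $A_f,V_f$ and $m-2$ in place of $m$. Throughout, assume \eqref{eq:5.28}. Then \eqref{eq:5.23} holds, so $A_{f,\beta}$ and $V_{f,\beta}$ are finite and locally absolutely continuous. Proposition \ref{Proposition 4.9.} applies with $u=v^\beta$ and gives
\[
V_{f,\beta}'(r)=\frac{1}{2\tan_k(r/2)}\bigl(A_{f,\beta}(r)-(m-2)V_{f,\beta}(r)\bigr).
\]
Consequently, once \eqref{eq:5.30} is established, $V_{f,\beta}'\le0$ follows from $\tan_k(r/2)>0$ on the interval \eqref{eq:2.10}. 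Theorem \ref{Theorem 4.10.} together with Proposition \ref{Proposition 4.11.} then gives $(A_{f,\beta}-2(m-2)V_{f,\beta})'>0$ almost everywhere, and by local absolute continuity this difference is nondecreasing.

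The main step is the inequality \eqref{eq:5.30}, which I would prove through an integral representation. Multiplying the ODE by the integrating factor $(2\operatorname{sn}_k(r/2))^{m-2}$ gives
\[
\frac{d}{dr}\Bigl((2\operatorname{sn}_k(r/2))^{m-2}V_{f,\beta}(r)\Bigr)=(2\operatorname{sn}_k(r/2))^{m-3}\operatorname{cs}_k(r/2)\,A_{f,\beta}(r).
\]
Integrate this from $\epsilon$ to $r$ and let $\epsilon\downarrow0$. The boundary term at $\epsilon$ vanishes by \eqref{eq:5.24}. On the right, the integrand is of order $(2\operatorname{sn}_k(\tau/2))^{m-3-\beta(m-n)/(n-2)}$ by \eqref{eq:5.16}. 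Converting to $\rho$ via $2\operatorname{sn}_k\asymp\rho^\alpha$ and $d\tau\asymp\rho^{\alpha-1}d\rho$, it behaves like $\rho^{n-3-\beta(1-\alpha)}d\rho$, which is integrable exactly when $\beta(1-\alpha)<n-2$. That integrability check is where the restriction \eqref{eq:5.23} enters, and it is the step I expect to need the most care, since the exponents must be matched using $n-2=\alpha(m-2)$. The result is
\[
V_{f,\beta}(r)=(2\operatorname{sn}_k(r/2))^{2-m}\int_0^r(2\operatorname{sn}_k(\tau/2))^{m-3}\operatorname{cs}_k(\tau/2)A_{f,\beta}(\tau)\,d\tau .
\]

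To finish, Theorem \ref{Theorem 4.8.} says $A_{f,\beta}$ is nonincreasing, so $A_{f,\beta}(\tau)\ge A_{f,\beta}(r)$ for $\tau<r$. The weight in the integral is positive and
\[
\int_0^r(2\operatorname{sn}_k(\tau/2))^{m-3}\operatorname{cs}_k(\tau/2)\,d\tau=\frac{(2\operatorname{sn}_k(r/2))^{m-2}}{m-2}.
\]
Hence $(m-2)V_{f,\beta}(r)\ge A_{f,\beta}(r)$, which is \eqref{eq:5.30}. Substituting this into the ODE gives $V_{f,\beta}'\le0$ almost everywhere, and the monotonicity of $V_{f,\beta}$ then follows by local absolute continuity.
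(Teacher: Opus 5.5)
Your proposal is correct and is essentially the paper's proof. Your integral representation $V_{f,\beta}(r)=(2\operatorname{sn}_k(r/2))^{2-m}\int_0^r(2\operatorname{sn}_k(\tau/2))^{m-3}\operatorname{cs}_k(\tau/2)A_{f,\beta}(\tau)\,d\tau$ is exactly the paper's ``independent verification'' \eqref{eq:5.31}. The only difference is ordering: the paper's primary line first obtains $V_{f,\beta}'\le0$ from $2(m-2)V_{f,\beta}'\le A_{f,\beta}'\le0$ (Theorems \ref{Theorem 4.8.} and \ref{Theorem 4.10.}) and then reads off \eqref{eq:5.30} from \eqref{eq:5.21}, whereas you prove \eqref{eq:5.30} first and deduce $V_{f,\beta}'\le0$ from it.
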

\begin{proof}
By Theorems 4.8 and 4.10, almost everywhere,
\[
A_{f,\beta}'\le0,\qquad
A_{f,\beta}'-2(m-2)V_{f,\beta}'\ge0.
\]
Since \(m-2>0\),
\[
2(m-2)V_{f,\beta}'\le A_{f,\beta}'\le0
\quad\Longrightarrow\quad
V_{f,\beta}'\le0.
\]
Local absolute continuity upgrades the derivative inequalities to monotonicity on intervals. Moreover, by \eqref{eq:5.21},
\[
V_{f,\beta}'
=
\frac{\operatorname{cs}_k(r/2)}{2\operatorname{sn}_k(r/2)}
\left(A_{f,\beta}-(m-2)V_{f,\beta}\right).
\]
The coefficient in front is positive, so \eqref{eq:5.30} holds for almost every \(r\); by continuity of the integral quantities, the comparison holds for all admissible \(r\).

One can also verify the comparison independently. By \eqref{eq:5.21},
\[
\frac{d}{dr}\left((2\operatorname{sn}_k(r/2))^{m-2}V_{f,\beta}\right)
=
(2\operatorname{sn}_k(r/2))^{m-3}\operatorname{cs}_k(r/2)A_{f,\beta}.
\]
Using \eqref{eq:5.24} and integrating,
\begin{equation}\label{eq:5.31}
V_{f,\beta}(r)
=
(2\operatorname{sn}_k(r/2))^{2-m}
\int_0^r(2\operatorname{sn}_k(t/2))^{m-3}\operatorname{cs}_k(t/2)A_{f,\beta}(t)dt.
\end{equation}
This zero-point integral converges, because in geodesic radius the worst order is
\(\rho^{n-3-\beta(1-\alpha)}d\rho\). The nonincreasingness of \(A_{f,\beta}\) gives
\(A_{f,\beta}(t)\ge A_{f,\beta}(r)\), and
\[
\int_0^r(2\operatorname{sn}_k(t/2))^{m-3}\operatorname{cs}_k(t/2)dt
=
\frac{(2\operatorname{sn}_k(r/2))^{m-2}}{m-2}.
\]
Substituting into \eqref{eq:5.31} again gives \eqref{eq:5.30}.
\end{proof}

\subsection{Parameter boundaries and differences between the conclusions of the two sections}

When \(\beta(1-\alpha)=2\), the positive volume integral in the definition of \(A_{f,\beta}\) has logarithmic divergence; when \(\beta(1-\alpha)=n-2\), the first term of \(W_{f,v^\beta}\) has logarithmic divergence. Beyond the corresponding critical values the divergence is a power divergence. Therefore the upper bounds in this section are strict inequalities. The lower bound \(\beta\ge(m-2)/(m-1)\) is a sufficient condition guaranteeing the sign from the square decomposition, and does not exclude the possibility that some special manifolds still have additional monotonicity for smaller \(\beta\).

The quantity \(A_f\) in Section 3 is defined for all \(m>n\ge3\), while the convergence condition for \(V_f\) is \(n+4\alpha-4>0\). In Section 4, when \(\beta=2\), \(A_{f,2}=A_f\), whereas \(V_{f,2}\) requires \(2(1-\alpha)<n-2\). For example, when \(n=3\), the range for \(V_f\) in Section 3 is \(3<m<6\), while the range for \(V_{f,2}\) in Section 4 is \(3<m<4\). These two different upper bounds come from different integral kernels, and there is no contradiction.

\section{Equality and rigidity}\label{sec:rigidity}

This section separates equality in the local differential identity from equality in an integrated formula. We retain $B,B_f,D_m,v$ from the preceding sections. All statements until the last paragraph assume $m>n$.

\begin{proposition}[local equality conditions]\label{Proposition 5.1.}
On a connected open set where $\nabla s\ne0$, vanishing of the bracket in \eqref{eq:3.16} is equivalent to
\begin{equation}\label{eq:rigidity-squares}
\begin{gathered}
\operatorname{Hess}s^2=\psi g,\qquad
\langle\nabla f,\nabla s^2\rangle=-(m-n)\psi,\\
(\operatorname{Ric}_f^m-(m-1)kg)(\nabla s^2,\nabla s^2)=0,
\qquad \psi=\frac{\Delta s^2}{n}.
\end{gathered}
\end{equation}
Moreover, for every $\beta\ge(m-2)/(m-1)$, these conditions are equivalent to $\mathcal L_fv^\beta=0$ on this set.
\end{proposition}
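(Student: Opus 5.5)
The plan has two parts. First we show that the vanishing of the bracket in \eqref{eq:3.16} is equivalent to the three conditions \eqref{eq:rigidity-squares}. Then we show that, for $\beta\ge(m-2)/(m-1)$, these conditions are equivalent to $\mathcal L_fv^\beta=0$.

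\emph{Bracket vanishes if and only if \eqref{eq:rigidity-squares} holds.} By \eqref{eq:2.2}, the three terms $|\mathring{\operatorname{Hess}}s^2|^2$, $D_m$ and $(\operatorname{Ric}_f^m-(m-1)kg)(\nabla s^2,\nabla s^2)$ are each nonnegative. So the bracket vanishes exactly when each term vanishes.
\begin{itemize}
\item The first term vanishes if and only if $\operatorname{Hess}s^2=\psi g$ with $\psi=\Delta s^2/n$.
\item Since $m>n$, the coefficient $n/(m(m-n))$ in \eqref{eq:3.4} is positive. Hence $D_m=0$ if and only if $\frac{m-n}{n}\Delta s^2+\langle\nabla f,\nabla s^2\rangle=0$. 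Substituting $\Delta s^2=n\psi$, this reads $\langle\nabla f,\nabla s^2\rangle=-(m-n)\psi$.
\item The curvature term is the third condition verbatim.
\end{itemize}
This part is purely algebraic and uses no hypothesis on $\nabla s$.

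\emph{Equivalence with $\mathcal L_fv^\beta=0$.} We use the global form \eqref{eq:5.13}:
\[
\mathcal L_fv^\beta=\frac{\beta v^{\beta-2}}{s^2}\Big[|B|^2+D_m+(\operatorname{Ric}_f^m-(m-1)kg)(\nabla s^2,\nabla s^2)+(\beta-2)s^2|\nabla v|^2\Big].
\]
Since $\nabla s\ne0$ on the set, Lemma \ref{Lemma 4.2.} and Lemma \ref{Lemma 4.4.} apply pointwise, and the bracket becomes the sum of nonnegative terms displayed in \eqref{eq:5.12}:
\begin{itemize}
\item $4s^2|\nabla s|^2\big(|\mathring\Pi|^2+\operatorname{Ric}_f^m(\nu,\nu)-(m-1)k\big)$;
\item $\frac{1}{m-1}\big(\widetilde\beta_f|B_f(\nu)|^2+(m-2)|B_f(\nu)^\top|^2\big)$;
\item the trace square with coefficient $\frac{m-1}{(n-1)(m-n)}$.
\end{itemize}

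\emph{Forward direction.} Assume \eqref{eq:rigidity-squares}. Then $B=0$ and $\operatorname{tr}_gB_f=0$ by \eqref{eq:5.6}. By \eqref{eq:5.3}, $B_f=B+(\operatorname{tr}_gB_f/n)g=0$, so $B_f(\nu)=0$ and, by \eqref{eq:5.9}, $\nabla v=0$. The first three terms of the bracket in \eqref{eq:5.13} vanish by hypothesis, and the $(\beta-2)$ term vanishes because $\nabla v=0$. Hence $\mathcal L_fv^\beta=0$ for every $\beta$.

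\emph{Converse direction.} Assume $\mathcal L_fv^\beta=0$ with $\beta\ge(m-2)/(m-1)$. Since $\beta v^{\beta-2}/s^2>0$, each nonnegative term of \eqref{eq:5.12} must vanish.
\begin{itemize}
\item Because $\widetilde\beta_f>0$ (shown in Proposition \ref{Proposition 4.6.}, using $k>0$ and $s>0$), the second term gives $B_f(\nu)=0$. This step requires $\beta$ in the admissible range; with a possibly nonpositive $\widetilde\beta_f$ it would fail.
\item The third term then gives $\operatorname{tr}_gB_f=\frac{m-n}{m-1}B_f(\nu,\nu)=0$, hence $D_m=0$ by \eqref{eq:5.6}.
\item The first term gives $\mathring\Pi=0$ and $(\operatorname{Ric}_f^m-(m-1)kg)(\nu,\nu)=0$, using $|\nabla s|\ne0$ and that each summand is nonnegative.
\item Lemma \ref{Lemma 4.1.} with $\mathring\Pi=0$ and $\operatorname{tr}_gB_f=B_f(\nu,\nu)=0$ gives $(B_f)^\top=0$. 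Together with $B_f(\nu)=0$ this yields $B_f=0$.
\item Then $B=B_f-(\operatorname{tr}_gB_f/n)g=0$.
\end{itemize}
Collecting these recovers exactly the three conditions of \eqref{eq:rigidity-squares}.

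The main point of care is to keep the full decomposition: the lower bound on $\beta$ is used only to ensure $\widetilde\beta_f>0$, and no step requires $m=n$.
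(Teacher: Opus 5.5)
Your proposal is correct and takes essentially the same route as the paper. The first equivalence comes from termwise vanishing of the three nonnegative defects, the forward direction from $B_f=0\Rightarrow\nabla v=0$ together with \eqref{eq:5.13}, and the converse from $\widetilde\beta_f>0$ in \eqref{eq:5.12}; the only cosmetic difference is that you reassemble $B_f=0$ through Lemma~\ref{Lemma 4.1.}, whereas the paper reads off $|B|^2+D_m=0$ directly from the decomposition \eqref{eq:5.5}.
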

\begin{proof}
The three terms in the bracket of \eqref{eq:3.16} are nonnegative. The first vanishes exactly when $\operatorname{Hess}s^2=\psi g$. Since the coefficient in $D_m$ is positive, its vanishing then reads
\[
0=\frac{m-n}{n}\Delta s^2+\langle\nabla f,\nabla s^2\rangle
=(m-n)\psi+\langle\nabla f,\nabla s^2\rangle.
\]
This proves the first equivalence. In particular $\Delta_f s^2=m\psi$, so $B_f=0$.

For the parameter family use \eqref{eq:5.12}. The coefficient of $|B_f(\nu)|^2$, after removing the positive factor $\beta v^{\beta-2}/s^2$, is
\begin{equation}\label{eq:rigidity-coefficient}
\frac{\widetilde\beta_f}{m-1}
=\frac{m}{m-1}+(\beta-2)\left(1-\frac{ks^2}{v^2}\right).
\end{equation}
This is increasing in $\beta$, and at the lower endpoint it equals
\[
\left.\frac{\widetilde\beta_f}{m-1}\right|_{\beta=(m-2)/(m-1)}
=\frac{m}{m-1}\frac{ks^2}{v^2}>0.
\]
Thus equality forces $B_f(\nu)=0$, the trace square to vanish, $\mathring\Pi=0$, and equality in the normal curvature term. The decomposition \eqref{eq:5.5} then gives $|B|^2+D_m=0$. Conversely, \eqref{eq:rigidity-squares} gives $B_f=0$ and hence $\nabla v=0$ by \eqref{eq:5.9}; \eqref{eq:5.13} now gives $\mathcal L_fv^\beta=0$. The positive $ks^2$ term is what prevents loss of the normal square at the endpoint.
\end{proof}

\begin{proposition}[local metric and weight]\label{Proposition 5.2}
Under the equality conditions of Proposition \ref{Proposition 5.1.}, there are local coordinates $(t,y)$, with $t$ increasing in the direction of $\nabla s$, a constant $c>0$, a metric $g_\Sigma$, and a function $f_\Sigma$ on the transverse level set such that
\begin{equation}\label{eq:rigidity-local-model}
\begin{gathered}
\bigl((s^2)'\bigr)^2=c s^2-ks^4,\qquad
s^2=\frac{c}{k}\sin^2\left(\frac{\sqrt{k}(t-t_0)}2\right),\\
g=dt^2+\bigl((s^2)'\bigr)^2g_\Sigma,\qquad
f=-(m-n)\log|(s^2)'|+f_\Sigma,\qquad v^2=c.
\end{gathered}
\end{equation}
The formula is restricted to an interval on which $(s^2)'>0$. A constant scaling of $g_\Sigma$ absorbs the choice of reference level. These are necessary local equality conditions; the tangential curvature bound still has to be imposed on a proposed model.
\end{proposition}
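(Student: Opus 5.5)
The plan is to work with $h=s^2$ on the given connected open set $U$ where $\nabla s\ne0$, and to read off the three pieces of \eqref{eq:rigidity-local-model} from the three conditions in \eqref{eq:rigidity-squares} together with the constancy of $v$. The argument is the classical Brinkmann--Tashiro analysis of a concircular function, with the weight handled by the trace condition.

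\emph{Step 1: warped product structure.} From $\operatorname{Hess}h=\psi g$ we get $\nabla|\nabla h|^2=2\operatorname{Hess}h(\nabla h)=2\psi\nabla h$. Hence $|\nabla h|$ is constant on each connected component of a level set of $h$. The unit normal $\nu=\nabla h/|\nabla h|$ satisfies $\nabla_\nu\nu=|\nabla h|^{-1}\bigl(\operatorname{Hess}h(\nu)-\nu(|\nabla h|)\nu\bigr)=0$, so its integral curves are unit-speed geodesics. Shrinking $U$ if necessary, flow a fixed level set $\Sigma$ along $\nu$. This gives coordinates $(t,y)$ with $g=dt^2+g_t$, $t$ increasing along $\nabla s$, and $h=h(t)$ with $h'=|\nabla h|>0$ and $h''=\psi$. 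The second fundamental form of the level sets is $\Pi=\operatorname{Hess}h|_{\top}/|\nabla h|=(h''/h')g_t$. Hence $\partial_tg_t=2(h''/h')g_t$, and integrating gives $g_t=(h')^2g_\Sigma$ for a fixed metric $g_\Sigma$ on $\Sigma$. The constant rescaling of $g_\Sigma$ absorbs the choice of reference level.

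\emph{Step 2: the ODE.} By Proposition~\ref{Proposition 5.1.} we have $B_f=0$, so Lemma~\ref{Lemma 4.4.} gives $\nabla v=0$, and connectedness of $U$ gives $v^2=c$ for a constant $c>0$. Since $4|\nabla s|^2=|\nabla h|^2/h=(h')^2/h$, the identity $v^2=4|\nabla s|^2+ks^2=c$ becomes $(h')^2=ch-kh^2$.

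As a consistency check, differentiating gives $h''=c/2-kh$. On the other hand, \eqref{eq:3.2} together with $\Delta_f h=m\psi$ gives $h''=\psi=2(|\nabla s|^2-kh/4)=c/2-kh$, so the two agree. Separating variables on an interval where $h'>0$ yields $h=\frac ck\sin^2\bigl(\sqrt k(t-t_0)/2\bigr)$. One checks this directly: $h'=\frac{c}{2\sqrt k}\sin(\sqrt k(t-t_0))$, so $(h')^2=ch-kh^2$.

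\emph{Step 3: the weight.} The second condition in \eqref{eq:rigidity-squares} reads $h'\,\partial_tf=-(m-n)h''$. Hence $\partial_t\bigl(f+(m-n)\log|h'|\bigr)=0$, which gives $f=-(m-n)\log|h'|+f_\Sigma(y)$.

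The curvature condition $(\operatorname{Ric}_f^m-(m-1)kg)(\nu,\nu)=0$ should then follow automatically from these formulas, as is typical for radial equality, but it imposes nothing tangential. This is why the statement records that the tangential bound must be imposed separately on a proposed model.

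The main obstacle I expect is Step 1. One must justify that $|\nabla h|$ depends only on $t$ across the whole chart, which requires working on a product neighbourhood where level sets are connected; otherwise the statement is local in exactly this sense. One must also justify integrating $\partial_tg_t=2(h''/h')g_t$ to obtain a $t$-independent $g_\Sigma$. Both points are handled by restricting to a small flow box inside $U$.
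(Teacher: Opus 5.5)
Your proposal is correct and follows the paper's proof essentially step by step: the geodesic normal flow, the equation $\partial_t g_t=2(h''/h')g_t$, and the integration of $h'\partial_t f=-(m-n)h''$. The only difference is the order in Step~2. You first obtain $v^2=c$ from $B_f=0$ and Lemma~\ref{Lemma 4.4.}, and then read off $(h')^2=ch-kh^2$. The paper instead uses the Green identity to write $h''=\psi=(h')^2/(2h)-kh/2$, integrates this to the first integral $(h')^2/h+kh=c$, and only afterwards recognizes it as $v^2$; the two orderings are equivalent.
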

\begin{proof}
The Green identity gives
\[
m\psi=\Delta_f s^2
=\frac{m}{2s^2}|\nabla s^2|^2-\frac{mk}{2}s^2,
\quad\text{hence}\quad
\psi=\frac{|\nabla s^2|^2}{2s^2}-\frac{k}{2}s^2.
\]
For every tangent vector $Y$ to a level set,
\[
Y(|\nabla s^2|^2)=2\operatorname{Hess}s^2(Y,\nabla s^2)=0.
\]
Thus $|\nabla s^2|$ is locally a function of $s^2$. The normal field $\nu$ has geodesic integral curves, since the tangential part of $\nabla_\nu\nabla s^2=\psi\nu$ is zero. Choose arclength $t$ on these curves. Then $(s^2)'=|\nabla s^2|$ and
\[
(s^2)''=\psi=\frac{((s^2)')^2}{2s^2}-\frac{k}{2}s^2.
\]
Differentiate the quotient, using $(s^2)'>0$:
\[
\frac{d}{dt}\left(\frac{((s^2)')^2}{s^2}\right)
=\frac{2(s^2)'(s^2)''}{s^2}-\frac{((s^2)')^3}{s^4}
=-k(s^2)'.
\]
Integration yields $((s^2)')^2/s^2+ks^2=c$. The left side is positive, so $c>0$. Substitution verifies the sine-square solution in \eqref{eq:rigidity-local-model}; the general increasing solution differs only by a translation in $t$.

Writing $g=dt^2+g_t$, for tangent vectors $Y,Z$ the Hessian equation gives
\[
\Pi(Y,Z)=\frac{\psi}{(s^2)'}g_t(Y,Z),\qquad
\partial_tg_t=2\frac{(s^2)''}{(s^2)'}g_t.
\]
Integrating this scalar metric equation gives $g_t=((s^2)')^2g_\Sigma$. The weight equation is
\[
(s^2)'\partial_tf=-(m-n)(s^2)'',
\]
which integrates to the stated expression for $f$. Finally,
\[
v^2=4|\nabla s|^2+ks^2
=\frac{|\nabla s^2|^2}{s^2}+ks^2=c,
\]
and the weighted volume element is
\begin{equation}\label{eq:rigidity-density}
e^{-f}dV=|(s^2)'|^{m-1}\,dt\,e^{-f_\Sigma}dV_\Sigma.
\end{equation}
This completes the derivation of both the metric and the weight.
\end{proof}

\begin{theorem}[strictness in the smooth closed problem]\label{Theorem 5.3}
For every admissible regular radius, $A_f'<0$. Whenever $V_f$ is defined as in Theorem \ref{Theorem 3.11.},
\[
V_f'<0,\qquad (A_f-2(m-1)V_f)'>0.
\]
In the ranges of Theorem \ref{Theorem 4.8.} and Corollary \ref{Corollary 4.12.}, respectively, the same assertions hold for $A_{f,\beta}$ and for the pair $V_{f,\beta}$, $A_{f,\beta}-2(m-2)V_{f,\beta}$. The locally absolutely continuous representatives are strictly monotone on their admissible intervals.
\end{theorem}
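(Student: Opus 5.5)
The plan is to show in each case that the nonnegative integrand in the exact derivative formula cannot vanish identically on the relevant region. The formulas are \eqref{eq:4.13} and \eqref{eq:5.18} for the exterior $\{s>\operatorname{sn}_k(r/2)\}$, and \eqref{eq:4.22} and \eqref{eq:5.25} for the sublevel set $\{b_f<r\}$. The integrands are continuous on the punctured manifold, so vanishing of the integral forces pointwise vanishing on the open region. On any connected open subset where $\nabla s\neq0$, Proposition \ref{Proposition 5.1.} then gives $B_f=0$, hence $\nabla v=0$ by Lemma \ref{Lemma 4.4.}, and $v^2\equiv c$ by Proposition \ref{Proposition 5.2}. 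For the $\beta$-family, the lower endpoint $\beta=(m-2)/(m-1)$ is covered by the positivity of $\widetilde\beta_f$ shown in Proposition \ref{Proposition 5.1.}.

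\emph{Sublevel regions.} These handle \eqref{eq:4.22} and \eqref{eq:5.25}. The set $\{b_f<r\}$ contains a punctured neighborhood of $p$ on which $\nabla s\ne0$, by the pole asymptotics \eqref{eq:2.16}. Vanishing would give $v$ constant there, which contradicts $v\asymp\rho^{\alpha-1}\to\infty$ from \eqref{eq:5.15}, since $\alpha<1$. This is exactly Proposition \ref{Proposition 4.11.}, and I reuse it verbatim for both $V_f$ and $V_{f,\beta}$. The strict sign of $V_f'$ and $V_{f,\beta}'$ then follows from $2(m-1)V_f'< A_f'\le 0$, or $2(m-2)V_{f,\beta}'<A_{f,\beta}'\le0$, once strictness of $A_f'$ (or $A_{f,\beta}'$) is known. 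Alternatively it follows from the integral representation \eqref{eq:4.24}/\eqref{eq:5.31}: strict decrease of $A_f$ turns $A_f(\tau)\ge A_f(r)$ into a strict inequality on a set of positive measure, so $mV_f>A_f$, and \eqref{eq:4.16} then gives $V_f'<0$.

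\emph{Exterior regions.} These handle $A_f'$ and $A_{f,\beta}'$ and form the main obstacle, because the pole is not available. Suppose the integrand vanishes on $U=\{s>\operatorname{sn}_k(r/2)\}$. Since $r$ is regular, $U$ is a nonempty open set whose closure is compact and avoids $p$, and $s$ attains its maximum at some $q\in U$. At $q$ we have $\nabla s=0$, so \eqref{eq:3.1} gives $\Delta_f s(q)=-mks(q)/4<0$. Near $q$, the set $\{\nabla s\ne0\}\cap U$ is dense, because the critical set has measure zero and no interior. Choose a connected component $W$ of the regular set with $q\in\overline W$, which is possible since $q$ is a limit of regular points. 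On $W$, Proposition \ref{Proposition 5.2} gives $v^2=|\nabla s^2|^2/s^2+ks^2\equiv c$. By continuity of $v$ on $M\setminus\{p\}$, we get $c=v(q)^2=ks(q)^2$.

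I expect the contradiction to come from the Hessian equation. We have $\operatorname{Hess}s^2=\psi g$ on $W$ with $\psi=|\nabla s^2|^2/(2s^2)-ks^2/2=\tfrac{c}{2}-ks^2$, using $v^2=c$. At $q$ this gives $\psi(q)=\tfrac{c}{2}-ks(q)^2=-\tfrac{c}{2}$. This is consistent with a maximum, so the contradiction must come instead from the weight: $\langle\nabla f,\nabla s^2\rangle=-(m-n)\psi$ on $W$. Letting $W\ni x\to q$, the left side tends to $0$ while the right side tends to $(m-n)c/2>0$, because $m>n$. This is the contradiction. (Equivalently, $f=-(m-n)\log|(s^2)'|+f_\Sigma$ blows up at $q$, contradicting smoothness of $f$.)

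Hence the exterior integrals are strictly positive, and $A_f'<0$, $A_{f,\beta}'<0$ at every admissible regular $r$. Since the derivatives are strictly signed almost everywhere and the representatives are locally absolutely continuous, strict monotonicity on the admissible intervals follows.
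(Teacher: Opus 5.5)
Your sublevel argument, and the way you get $V_f'<0$ and $V_{f,\beta}'<0$ from the strict difference inequalities, match the paper's proof. For the exterior integrals your route is correct but longer. You assume the defect vanishes on $\{s>\operatorname{sn}_k(r/2)\}$ and pass to a regular component $W$ adjacent to the maximum point $q$. You then use Proposition~\ref{Proposition 5.2} to get $v^2\equiv c=ks(q)^2$ and $\psi(q)=-c/2$, and contradict the weight equation $\langle\nabla f,\nabla s^2\rangle=-(m-n)\psi$ at $q$.

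The paper instead evaluates the continuous, normal-free integrand at $q$ itself. Since $\nabla s(q)=0$, one has $\Delta s^2(q)=\Delta_f s^2(q)=-\tfrac{mk}{2}s(q)^2\neq0$, so $D_m(q)=\tfrac{m-n}{mn}(\Delta s^2(q))^2>0$. Since also $\nabla v(q)=0$, \eqref{eq:5.13} reduces to $\mathcal L_f v^\beta(q)=\beta v^{\beta-2}s^{-2}(|B|^2+D_m)(q)>0$. Positivity on a neighborhood of $q$ then follows by continuity.

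Your contradiction is the same fact in disguise: the weight equation in \eqref{eq:rigidity-squares} is exactly the vanishing of $D_m$. So the detour through Proposition~\ref{Proposition 5.2} adds only a geometric reading, namely that the model weight $-(m-n)\log|(s^2)'|$ blows up at an interior critical point.

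The detour also costs a component-selection step, and your stated reason for it is insufficient. Being a limit of regular points does not by itself put $q$ in the closure of a single regular component; think of critical circles accumulating at $q$. There are two ways to fix this:
\begin{itemize}
\item Use $\operatorname{Hess}s(q)\neq0$, which holds because its trace is $-mks(q)/4$. An eigendirection with nonzero eigenvalue places the critical set near $q$ inside a smooth hypersurface, so a half-ball at $q$ consists of regular points and can serve as $W$.
\item Skip the component altogether. The rigidity conditions give $D_m=0$ on the dense regular set near $q$, so $D_m(q)=0$ by continuity. At that point you have recovered the paper's direct argument.
\end{itemize}
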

\begin{proof}
Every nonempty exterior region $\{s>\operatorname{sn}_k(r/2)\}$ contains a point $x$ at which $s$ attains its global maximum. It is away from $p$. There $\nabla s=0$, and
\[
\Delta s^2=\Delta_fs^2=-\frac{mk}{2}s^2\ne0,\qquad
D_m(x)=\frac{m-n}{mn}(\Delta s^2)^2>0.
\]
Consequently the bracket of \eqref{eq:3.16} is positive on a neighborhood of $x$. Its exterior integral gives $A_f'<0$. Also $\nabla v(x)=0$, and the normal-free identity \eqref{eq:5.13} gives
\[
\mathcal L_fv^\beta(x)
=\frac{\beta v^{\beta-2}}{s^2}\bigl(|B|^2+D_m\bigr)(x)>0.
\]
Together with nonnegativity everywhere, this proves $A_{f,\beta}'<0$ from \eqref{eq:5.18}.

Every positive sublevel contains a punctured neighborhood of $p$. If its basic defect integral vanished, Proposition \ref{Proposition 5.1.} would imply $\nabla v=0$ there. This contradicts $v\asymp\rho^{\alpha-1}\to\infty$. For the parameter defect the same conclusion follows from the strictly positive coefficient \eqref{eq:rigidity-coefficient}, as in Proposition \ref{Proposition 4.11.}  Therefore both difference derivatives are positive. Finally,
\[
2(m-1)V_f'<A_f'<0,\qquad
2(m-2)V_{f,\beta}'<A_{f,\beta}'<0
\]
whenever the respective quantities are defined. Integrating these almost-everywhere inequalities proves strict monotonicity.
\end{proof}

If a local equality region were continued to a pole with $s=0$, \eqref{eq:rigidity-local-model} would give $s^2\sim c(t-t_0)^2/4$ and $f=-(m-n)\log|t-t_0|+O(1)$. This weight is singular for $m>n$, and its measure has radial exponent $m-1$ instead of the smooth exponent $n-1$. Hence the obstruction is not the mere use of the non-trace-free tensor $B_f$: the full square decomposition recovers equality information, but its geometry is incompatible with the prescribed smooth weight and point source.

The unweighted case $m=n$ with constant $f$ is a separate case. Starting with the ordinary Bochner formula removes $D_m$ and avoids division by $m-n$. A smoothly completed equality metric with a pole then has round transverse metric, and the sine warping yields sectional curvature $k$, as in the spherical rigidity underlying Manea's formulas. This statement requires that smooth completion; a local warped expression by itself is not a global sphere theorem.

\section{Negative effective dimension}\label{sec:negative}

Assume in this section that $m<0$, $n\ge3$, $k>0$, and
$\operatorname{Ric}_f^m\ge(m-1)kg$ on a closed weighted Riemannian manifold. The operator and source normalization are still \eqref{eq:2.4}--\eqref{eq:2.5}. Its potential is positive because $m(m-2)>0$, so its positive Green function exists. The definitions of $s,v,D_m,\mathcal L_f,B,B_f$ are unchanged. Negative effective dimensions also occur in the curvature-dimension theory studied by Ohta \cite{Ohta}; our argument below is a direct smooth calculation.

\begin{proposition}[Signs in the local identities]
The identities of Sections~\ref{sec:setup} and \ref{sec:beta} involving only local differentiation remain valid for $m<0$. In particular $\mathcal L_fv^\beta\ge0$ if $\beta\ge(m-2)/(m-1)$.
\end{proposition}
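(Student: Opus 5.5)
The plan is to re-run the derivations of Sections~\ref{sec:setup} and \ref{sec:beta} line by line and check two things. First, no step divides by a quantity that vanishes when $m<0$. Second, every coefficient whose sign is used is still positive, or, where it is not, the relevant combination is still nonnegative.

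\textbf{Step 1: the exact identities survive.} The identities themselves are purely algebraic consequences of the following ingredients, none of which uses $m>n$:
\begin{itemize}
\item the Green equation \eqref{eq:2.6}, which holds on $M\setminus\{p\}$ because the potential $m(m-2)k/4$ is positive;
\item the chain rule for $G=(2s)^{2-m}$, where $2-m>0$ and $s>0$ off the pole;
\item the scalar identity \eqref{eq:3.5}, which needs only $m\neq0$ and $m\neq n$.
\end{itemize}
Consequently \eqref{eq:3.1}--\eqref{eq:3.3}, the weighted Bochner formula \eqref{eq:3.6}, Lemma~\ref{Lemma 2.2.}, \eqref{eq:3.14} and \eqref{eq:3.16} carry over verbatim. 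For the $\Gamma_2$ form of \eqref{eq:3.6} with $m<0$, which is Ohta's setting, I would only note that it is the same expansion.

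\textbf{Step 2: the defect terms keep their signs.} I would check the coefficient in each nonnegative term.
\begin{itemize}
\item In $D_m$ the coefficient is $n/(m(m-n))$. Both $m$ and $m-n$ are negative, so it is positive and $D_m\ge0$.
\item In Lemma~\ref{Lemma 4.2.} the coefficients are $m/(m-1)$, $(m-2)/(m-1)$ and $(m-1)/((n-1)(m-n))$. Each is a ratio of two negative numbers, so each is positive, and \eqref{eq:5.5} remains a sum of nonnegative squares.
\item Lemmas~\ref{Lemma 4.3.}, \ref{Lemma 4.4.} and Proposition~\ref{Proposition 4.5.} are again pure algebra.
\end{itemize}

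\textbf{Step 3: the delicate term in \eqref{eq:5.12}.} The only place needing care is the $|B_f(\nu)|^2$ term, and I expect this to be the main obstacle. For $m<0$ the factor $1/(m-1)$ is negative. Moreover, the second expression in \eqref{eq:5.11} contains $mks^2<0$, so it is useless for a sign argument.

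I would therefore work directly with the bracket coefficient from the proof of Proposition~\ref{Proposition 4.6.}, namely
\[
\frac{\widetilde\beta_f}{m-1}=\frac{m}{m-1}+(\beta-2)x,\qquad x=\frac{4|\nabla s|^2}{v^2}\in[0,1].
\]
This is nondecreasing in $\beta$ because $x\ge0$. At the lower endpoint $\beta=(m-2)/(m-1)$ we have $\beta-2=-m/(m-1)$, so the coefficient equals
\[
\frac{m}{m-1}(1-x)=\frac{m}{m-1}\cdot\frac{ks^2}{v^2}\ge0 .
\]
Hence the coefficient is nonnegative for every $\beta\ge(m-2)/(m-1)$.

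\textbf{Step 4: conclude.} The remaining terms of \eqref{eq:5.12} are handled as follows.
\begin{itemize}
\item Since $m-2<m-1<0$, the threshold $(m-2)/(m-1)$ exceeds $1$. So $\beta>0$ in the admissible range, and the prefactor $\beta v^{\beta-2}/s^2$ is positive.
\item The normal-curvature term $4\beta v^{\beta-2}|\nabla s|^2\bigl(|\mathring\Pi|^2+\operatorname{Ric}_f^m(\nu,\nu)-(m-1)k\bigr)$ is nonnegative by the hypothesis $\operatorname{Ric}_f^m\ge(m-1)kg$.
\item The $(m-2)/(m-1)$ term and the trace-square term carry positive coefficients by Step 2.
\end{itemize}
Thus $\mathcal L_fv^\beta\ge0$ at regular points.

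To pass to the whole punctured manifold, I would use the normal-free form \eqref{eq:5.13}. At a critical point of $s$ we have $\nabla s=0$, hence $\nabla v=0$, and \eqref{eq:5.13} reduces to $\beta v^{\beta-2}s^{-2}\bigl(|B|^2+D_m\bigr)\ge0$. Alternatively, one can argue by continuity, since the critical set has empty interior.
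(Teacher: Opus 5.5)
Your proposal is correct and follows essentially the paper's own argument. The identities are algebraic with nonzero denominators for $m<0$, and the coefficients of $D_m$ and of the decomposition \eqref{eq:5.5} are ratios of two negative numbers, hence positive. The key point, as in the paper's \eqref{eq:rigidity-coefficient}, is to work with $\widetilde\beta_f/(m-1)=\frac{m}{m-1}+(\beta-2)\frac{4|\nabla s|^2}{v^2}$ rather than $\widetilde\beta_f$ alone; this is bounded below by $\frac{m}{m-1}\frac{ks^2}{v^2}$ whenever $\beta\ge(m-2)/(m-1)$. Your extension to critical points through the normal-free identity \eqref{eq:5.13}, evaluated directly using $\nabla v=0$ there, is a harmless variant of the paper's continuity step.
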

\begin{proof}
The chain rule and square completion are algebraic identities with nonzero denominators. The coefficient of $D_m$ is positive since $m(m-n)>0$. Thus \eqref{eq:3.6} and \eqref{eq:3.16} have the same nonnegative defect. The alternative identity
\[
|B|^2+D_m=|B_f|^2+\frac{(\operatorname{tr}_gB_f)^2}{m-n}
\]
is still true, but its last coefficient is now negative; it is not a nonnegative decomposition. Instead use the exact decomposition \eqref{eq:5.5}. Its coefficients satisfy
\[
\frac{m}{m-1}>0,\qquad
\frac{m-2}{m-1}>0,\qquad
\frac{m-1}{(n-1)(m-n)}>0.
\]
For the parameter formula the relevant coefficient is $\widetilde\beta_f/(m-1)$, not $\widetilde\beta_f$ alone. By \eqref{eq:rigidity-coefficient}, for $\beta\ge(m-2)/(m-1)$,
\[
\frac{\widetilde\beta_f}{m-1}
\ge\frac{m}{m-1}\frac{ks^2}{v^2}>0.
\]
The other coefficients in \eqref{eq:5.12} have the signs just listed. The curvature excess is nonnegative by assumption. Therefore $\mathcal L_fv^\beta\ge0$ on the regular set and, by the continuous identity \eqref{eq:5.13}, everywhere away from $p$. The equality conditions on regular regions are the same as in Proposition \ref{Proposition 5.1.}.
\end{proof}

\paragraph{The pole and the natural level parameter.}
The fundamental solution still satisfies $G\sim c_p\rho^{2-n}$. Now
\begin{equation}\label{eq:negative-pole}
2s=G^{1/(2-m)}\sim c_p^{1/(2-m)}\rho^{(2-n)/(2-m)}\longrightarrow\infty.
\end{equation}
Thus $s$ has a positive minimum $s_0$ away from $p$, and each finite sublevel is compactly contained in $M\setminus\{p\}$. We use $a$, the level value of $2s$ and write $a_0=2s_0$. The inverse sine need not exist on these levels, so no global $r$ parametrization is assumed. The symbol $a$ denotes a real parameter, not a replacement name for the function $s$.

For $a>a_0$ define the same integral expressions in this parameter:
\begin{equation}\label{eq:negative-definitions}
\begin{aligned}
I_{f,u}(a)&=a^{1-m}\int_{2s=a}2u|\nabla s|e^{-f}d\sigma
 +\frac{mk}{4}\int_{2s<a}uG e^{-f}dV,\\
J_{f,u}(a)&=a^{-m}\int_{2s<a}u(4|\nabla s|^2-ks^2)e^{-f}dV
 +\frac{k}{4}\int_{2s<a}uG e^{-f}dV,\\
W_{f,u}(a)&=a^{2-m}\int_{2s<a}\frac{u}{(2s)^2}
 \left(4|\nabla s|^2-\frac{mk}{m-2}s^2\right)e^{-f}dV+\frac{mk}{4(m-2)}\int_{2s<a}uG e^{-f}dV.
\end{aligned}
\end{equation}
All these integrals are finite for smooth $u$ on the punctured manifold. Put
\[
A_f=I_{f,v^2},\quad V_f=J_{f,v^2},\qquad
A_{f,\beta}=I_{f,v^\beta},\quad V_{f,\beta}=W_{f,v^\beta}.
\]
The primes in this section mean differentiation with respect to $a$.

\begin{proposition}[Flux and volume derivatives]\label{Proposition 6.2}
At regular values,
\begin{equation}\label{eq:negative-flux}
\begin{aligned}
I_{f,u}'=&a^{1-m}\int_{2s=a}\langle\nabla u,\nu\rangle e^{-f}d\sigma,\\
aJ_{f,u}'=&I_{f,u}-mJ_{f,u},\\
aW_{f,u}'=&I_{f,u}-(m-2)W_{f,u},\\
a^{3-m}I_{f,u}'=&\int_{2s<a}G^2\mathcal L_fu\,e^{-f}dV.
\end{aligned}
\end{equation}
In addition,
\begin{equation}\label{eq:negative-drift}
\int_{2s<a}\mathcal L_fu\,e^{-f}dV
=a^{m-1}\bigl(I_{f,u}'-2(m-2)W_{f,u}'\bigr).
\end{equation}
\end{proposition}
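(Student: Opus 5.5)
The plan is to repeat the computations of Lemma~\ref{Lemma3.1}, Propositions~\ref{Proposition 3.5.}, \ref{Proposition 3.8.}, \ref{Proposition 4.9.} and Theorem~\ref{Theorem 4.10.}, with $2\operatorname{sn}_k(r/2)$ replaced by the level parameter $a$. The essential simplification is that for $m<0$ each sublevel $\{2s<a\}$ is a compact domain with smooth boundary (at regular $a$) contained in $M\setminus\{p\}$, by \eqref{eq:negative-pole}. So every divergence-theorem step is applied on a region avoiding the pole, and no inner flux has to be controlled.

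\textbf{Geometric facts.} On $\{2s<a\}$ the outward normal is $\nu=\nabla s/|\nabla s|$, since $s$ increases outward. The coarea formula reads
\[
\frac{d}{da}\int_{2s<a}h\,e^{-f}dV=\int_{2s=a}\frac{h}{2|\nabla s|}e^{-f}d\sigma .
\]
On the level set we have $G=a^{2-m}$, $s^2=a^2/4$, and
\[
\nabla G=(2-m)(2s)^{1-m}\,2\nabla s ,
\]
where now $2-m>0$. Hence the boundary term of $I_{f,u}$ equals
\[
\frac{1}{2-m}\int_{2s=a}u\langle\nabla G,\nu\rangle e^{-f}d\sigma .
\]

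\textbf{Derivatives of $I$, $J$, $W$.} First I rewrite this boundary term as the volume integral
\[
\frac{1}{2-m}\int_{2s<a}\bigl(u\Delta_fG+\langle\nabla u,\nabla G\rangle\bigr)e^{-f}dV
\]
and differentiate by coarea. Using $\Delta_fG=\tfrac{m(m-2)k}{4}G$, the $uG$ part gives $-\tfrac{mk}{4}\int_{2s=a}uG/(2|\nabla s|)$, which cancels exactly the derivative of the volume term of $I_{f,u}$. The cross term becomes $a^{1-m}\int_{2s=a}\langle\nabla u,\nu\rangle$, which is the first identity.

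For $J$ and $W$ I copy the proofs of Propositions~\ref{Proposition 3.8.} and \ref{Proposition 4.9.}. Differentiating $a^{-m}$ produces $-mJ/a$ after the $uG$ terms are reorganized. In the boundary contributions, the $-ks^2$ (respectively $-\tfrac{mk}{m-2}s^2$) parts cancel against the $\tfrac{k}{4}uG$ (respectively $\tfrac{mk}{4(m-2)}uG$) terms, because $G=a^{2-m}$ on the level. What remains is $a^{-m}\int_{2s=a}2u|\nabla s|$, i.e.\ $a^{-1}$ times the boundary part of $I_{f,u}$. The leftover $\int uG$ coefficients cancel as in those proofs ($\tfrac{mk}{4}-m\tfrac{k}{4}=0$). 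This yields $aJ'=I-mJ$ and $aW'=I-(m-2)W$.

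\textbf{The $\mathcal L_f$ identities.} For the fourth identity I integrate $G^2\mathcal L_fu=\operatorname{div}_f(G^2\nabla u)$ over the sublevel. This gives $a^{4-2m}\int_{2s=a}\langle\nabla u,\nu\rangle=a^{3-m}I'$. The sign is now $+$ rather than the $-$ of Proposition~\ref{Proposition 3.5.}, because we integrate over the sublevel instead of the exterior.

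For \eqref{eq:negative-drift} I follow Theorem~\ref{Theorem 4.10.}. The first identity gives $\int_{2s<a}\Delta_fu=a^{m-1}I'$. For the drift term I use \eqref{eq:5.27} together with the divergence of $u\nabla s^2/s^2$, whose boundary normal component is $4|\nabla s|/a$. This gives
\[
\int_{2s<a}\Bigl\langle\frac{\nabla s^2}{s^2},\nabla u\Bigr\rangle e^{-f}dV
=2a^{m-2}\bigl(I-(m-2)W\bigr)
=2a^{m-1}W' ,
\]
and combining via $\mathcal L_fu=\Delta_fu-(m-2)\langle\nabla s^2/s^2,\nabla u\rangle$ finishes the proof.

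\textbf{Main obstacle.} The delicate point is not the pole but regularity in $a$. I need the critical set of $s$ to have measure zero, which holds by \eqref{eq:3.1} as in Section~3, and I need the level sets to be regular at the values used. Near the minimum value $a_0$ the level sets may be degenerate, so the identities are stated at regular values and extended via locally absolutely continuous representatives. I will also check carefully that every sign change caused by $m<0$ (orientation, $2-m>0$, and the sign of $(2-m)(2s)^{1-m}$) is consistent.
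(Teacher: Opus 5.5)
Your proposal is correct and follows the paper's proof essentially line by line. Since for $m<0$ the sublevels $\{2s<a\}$ avoid the pole, you rewrite the boundary term of $I_{f,u}$ using $\nabla G=(2-m)(2s)^{1-m}\nabla(2s)$ and differentiate by divergence and coarea. You then reuse the coarea computations of Propositions~\ref{Proposition 3.8.} and \ref{Proposition 4.9.} with $a$ in place of $2\operatorname{sn}_k(r/2)$, and integrate $\operatorname{div}_f(G^2\nabla u)$ over the sublevel, correctly explaining the sign flip relative to Proposition~\ref{Proposition 3.5.}. Finally you obtain \eqref{eq:negative-drift} from \eqref{eq:5.27}, exactly as the paper does. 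The only blemish is cosmetic: for $W_{f,u}$ the cancelling $uG$ coefficients are $\frac{mk}{4}-(m-2)\frac{mk}{4(m-2)}=0$, not the $\frac{mk}{4}-m\frac{k}{4}=0$ you quoted, which is the one for $J_{f,u}$.
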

\begin{proof}
Since $\nabla G=(2-m)(2s)^{1-m}\nabla(2s)$, the boundary term of $I_{f,u}$ is
\[
\frac1{2-m}\int_{2s=a}u\langle\nabla G,\nu\rangle e^{-f}d\sigma.
\]
Differentiation by divergence and coarea gives
\[
\begin{aligned}
\frac{d}{da}\left(\frac1{2-m}\int_{2s=a}u\langle\nabla G,\nu\rangle e^{-f}d\sigma\right)
&=\frac1{2-m}\int_{2s=a}\frac{u\Delta_fG+\langle\nabla u,\nabla G\rangle}{2|\nabla s|}e^{-f}d\sigma\\
&=-\frac{mk}{4}\int_{2s=a}\frac{uG}{2|\nabla s|}e^{-f}d\sigma
 +a^{1-m}\int_{2s=a}\langle\nabla u,\nu\rangle e^{-f}d\sigma.
\end{aligned}
\]
The first term cancels the derivative of the volume correction. The coarea computations for $J,W$ are exactly those of Proposition \ref{Proposition 3.8.} and Proposition \ref{Proposition 4.9.} with $da/dr$ removed: their potential coefficients cancel as $mk/4-mk/4=0$ and $mk/4-(m-2)mk/[4(m-2)]=0$, respectively. These yield the middle identities of \eqref{eq:negative-flux}.

The divergence form of $\mathcal L_f$ gives, with the outward normal $\nu=\nabla s/|\nabla s|$,
\[
\begin{aligned}
\int_{2s<a}G^2\mathcal L_fu\,e^{-f}dV
=a^{4-2m}\int_{2s=a}\langle\nabla u,\nu\rangle e^{-f}d\sigma
=a^{3-m}I_{f,u}'.
\end{aligned}
\]
There is no inner pole boundary. Finally, \eqref{eq:5.27} and integration by parts give
\[
\begin{aligned}
\int_{2s<a}\left\langle\frac{\nabla s^2}{s^2},\nabla u\right\rangle e^{-f}dV
=2a^{m-2}\bigl(I_{f,u}-(m-2)W_{f,u}\bigr)
=2a^{m-1}W_{f,u}'.
\end{aligned}
\]
Here the $uG$ coefficients cancel exactly as above. Also $\int_{2s<a}\Delta_fu\,e^{-f}dV=a^{m-1}I_{f,u}'$. Substitution into the definition of $\mathcal L_f$ proves \eqref{eq:negative-drift}.
\end{proof}

\begin{theorem}[Three formulas for $m<0$]\label{Theorem 6.3}
For every $\beta\ge(m-2)/(m-1)$ and every regular $a>a_0$,
\begin{equation}\label{eq:negative-beta}
\begin{aligned}
A_{f,\beta}'(a)&=a^{m-3}\int_{2s<a}G^2\mathcal L_fv^\beta e^{-f}dV>0,\\
\bigl(A_{f,\beta}-2(m-2)V_{f,\beta}\bigr)'(a)
&=a^{1-m}\int_{2s<a}\mathcal L_fv^\beta e^{-f}dV>0,\\
V_{f,\beta}'(a)&>0.
\end{aligned}
\end{equation}
For the basic volume kernel the corresponding formulas are
\begin{equation}\label{eq:negative-basic}
\begin{aligned}
A_f'(a)&=8a^{m-3}\int_{2s<a}(2s)^{2-2m}
 \Bigl[|B|^2+D_m+(\operatorname{Ric}_f^m-(m-1)kg)(\nabla s^2,\nabla s^2)\Bigr]e^{-f}dV>0,\\
\bigl(A_f-2(m-1)V_f\bigr)'(a)
&=\frac8{a^{m+1}}\int_{2s<a}\Bigl[|B|^2+D_m+(\operatorname{Ric}_f^m-(m-1)kg)(\nabla s^2,\nabla s^2)\Bigr]e^{-f}dV>0,\\
V_f'(a)&>0.
\end{aligned}
\end{equation}
All these quantities are locally absolutely continuous, and the strict monotonicity extends across critical values. No pole upper bound on $\beta$ is needed for finite $a$.
\end{theorem}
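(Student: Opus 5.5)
The plan is to read off all six formulas from Proposition~\ref{Proposition 6.2} together with the local identities, which remain valid for $m<0$ by the preceding proposition. Everything happens on the compact sublevel $\{2s<a\}\subset M\setminus\{p\}$, so there are no pole fluxes to control. This is why no upper bound on $\beta$ enters.

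For the first line of \eqref{eq:negative-beta}, I take $u=v^\beta$ in the last identity of \eqref{eq:negative-flux} and multiply by $a^{m-3}>0$. For the second line, I take $u=v^\beta$ in \eqref{eq:negative-drift} and multiply by $a^{1-m}$. Nonnegativity of both integrands comes from the sign proposition: $\mathcal L_fv^\beta\ge0$ for $\beta\ge(m-2)/(m-1)$.

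For \eqref{eq:negative-basic}, I take $u=v^2$ and substitute \eqref{eq:3.16}, using $G^2\cdot 2/s^2=8(2s)^{2-2m}$. This gives $A_f'$. For the difference $A_f-2(m-1)V_f$, I integrate the divergence identity \eqref{eq:4.23} over $\{2s<a\}$, which has only an outer boundary. The bookkeeping is that of Theorem~\ref{Theorem 3.11.}, carried out in the parameter $a$ instead of $r$: on $\{2s=a\}$ one has $s^2=a^2/4$ and $\langle\nabla s^2,\nu\rangle=s\cdot 2|\nabla s|$. The boundary terms convert through the definitions of $I_{f,v^2}$ and $J_{f,v^2}$. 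The two $uG$ volume integrals cancel exactly as before, and the relation $aJ'=I-mJ$ then turns the result into $(8/a^{m+1})\int_{2s<a}[\cdots]$. All coefficients are rational identities in $m$, so none of the cancellations uses the sign of $m$.

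Strictness will be argued as in Theorem~\ref{Theorem 5.3}, with the roles of maximum and minimum swapped. The sublevel $\{2s<a\}$ for $a>a_0$ contains a global minimum point $x$ of $s$. At $x$ we have $\nabla s=0$ and $\Delta s^2=\Delta_fs^2=-\tfrac{mk}{2}s^2\neq0$ (note $m\ne0$), hence $D_m(x)=\frac{m-n}{mn}(\Delta s^2)^2>0$, with positive coefficient since $m(m-n)>0$. So the bracket in \eqref{eq:3.16} is positive near $x$. Also $\nabla v(x)=0$, so \eqref{eq:5.13} gives $\mathcal L_fv^\beta(x)=\beta v^{\beta-2}s^{-2}(|B|^2+D_m)(x)>0$. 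This makes all four integrals strictly positive.

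The monotonicity of the volume quantities is the step needing care, since $m-2<0$ and $m-1<0$ reverse the sign logic of Corollary~\ref{Corollary 4.12.}. From $A_{f,\beta}'-2(m-2)V_{f,\beta}'>0$ we get $-2(m-2)V_{f,\beta}'>-A_{f,\beta}'$, and $A_{f,\beta}'>0$ does not by itself give the sign of $V_{f,\beta}'$. I would instead use the ODE $aV_{f,\beta}'=A_{f,\beta}-(m-2)V_{f,\beta}$, i.e.\ $(a^{m-2}V_{f,\beta})'=a^{m-3}A_{f,\beta}$. Integrating from $a_0$, where the sublevel is null so that $a^{m-2}V_{f,\beta}\to0$ by the definition, gives
\[
a^{m-2}V_{f,\beta}(a)=\int_{a_0}^a t^{m-3}A_{f,\beta}(t)\,dt .
\]
Since $A_{f,\beta}$ is increasing, $A_{f,\beta}(t)<A_{f,\beta}(a)$ for $t<a$, and this yields an inequality between $(m-2)V_{f,\beta}$ and $A_{f,\beta}$ with the correction term $a_0^{m-2}$. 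With $m-2<0$, the factor $t^{m-3}$ is decreasing; I expect the exact sign to need checking against $A_{f,\beta}(a_0^+)$, which is a flux over a degenerate level. The basic $V_f$ would be handled the same way, with $m$ in place of $m-2$. I expect this endpoint analysis at $a_0$ to be the main obstacle.

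Finally, local absolute continuity across critical values follows as in Section~3: the critical set of $s$ has measure zero by \eqref{eq:3.1}, and the coarea formula applies.
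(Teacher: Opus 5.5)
Your derivation of the four derivative identities and your strictness argument at a minimum point of $s$ match the paper. The third line of each display, $V_{f,\beta}'>0$ and $V_f'>0$, is not proved, and the route you sketch cannot prove it.

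From $a^{m-2}V_{f,\beta}(a)=\int_{a_0}^a t^{m-3}A_{f,\beta}(t)\,dt$ together with $A_{f,\beta}(t)<A_{f,\beta}(a)$, you only get an \emph{upper} bound:
\[
(2-m)V_{f,\beta}(a)\le A_{f,\beta}(a)\bigl((a/a_0)^{2-m}-1\bigr).
\]
Since $aV_{f,\beta}'=A_{f,\beta}+(2-m)V_{f,\beta}$ with $2-m>0$, this bounds $V_{f,\beta}'$ from above, which is the wrong direction. The comparison argument of Corollary~\ref{Corollary 4.12.} has no useful analogue here. You need a lower bound, and monotonicity of $A_{f,\beta}$ supplies one only through its value at $a_0^+$, which you leave undetermined.

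The missing step is that every quantity vanishes as $a\downarrow a_0$. This is the same observation you already used for $a^{m-2}V_{f,\beta}$.
\begin{itemize}
\item The sublevels $\{2s<a\}$ shrink to the minimum set of $s$. That set has measure zero, because there $\nabla s=0$ but $\Delta_f s=-mks/4>0$. The integrands are smooth near it, so all volume integrals tend to $0$.
\item The level flux is not a degenerate object either. Since the sublevel has no inner boundary,
\[
\int_{2s=a}2u|\nabla s|e^{-f}d\sigma=\int_{2s<a}\operatorname{div}_f\bigl(u\nabla(2s)\bigr)e^{-f}dV\to0 .
\]
\end{itemize}
Hence $A_f(a_0^+)=A_{f,\beta}(a_0^+)=0$, and strict increase gives $A_f,A_{f,\beta}>0$ on $(a_0,\infty)$.

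Your integrated formula, together with its analogue $a^mV_f(a)=\int_{a_0}^a t^{m-1}A_f(t)\,dt$, then gives $V_{f,\beta}>0$ and $V_f>0$. This follows from positivity of the integrand, not from any comparison with $A$. Finally, the relations
\[
aV_{f,\beta}'=A_{f,\beta}-(m-2)V_{f,\beta},\qquad aV_f'=A_f-mV_f,
\]
with $m-2<0$ and $m<0$, give $V_{f,\beta}'>0$ and $V_f'>0$ immediately. This is how the paper closes the argument.
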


\begin{proof}
The first two identities in \eqref{eq:negative-beta} follow from Proposition \ref{Proposition 6.2} with $u=v^\beta$. For the basic area formula substitute \eqref{eq:3.16} and $G^2/s^2=4(2s)^{2-2m}$. To check the second basic identity explicitly, integrate the divergence identity \eqref{eq:4.23} on $\{2s<a\}$. Its boundary terms are
\[
\frac{a^{m+1}}4A_f'
-\frac{m-1}{2}a^m\left(A_f-\frac{mk}{4}\int_{2s<a}v^2G e^{-f}dV\right).
\]
Its product-volume term is
\[
-\frac{m(m-1)}2a^m\left(V_f-\frac{k}{4}\int_{2s<a}v^2G e^{-f}dV\right).
\]
The two $v^2G$ coefficients cancel. Thus twice the defect integral is
\[
\frac{a^{m+1}}4A_f'-\frac{m-1}{2}a^m(A_f-mV_f)
=\frac{a^{m+1}}4\bigl(A_f'-2(m-1)V_f'\bigr),
\]
where $aV_f'=A_f-mV_f$ was used. This proves \eqref{eq:negative-basic} except for the volume sign.

The minimum set of $s$ has measure zero. Indeed $\nabla s=0$ there but $\Delta_fs=-mks/4>0$. The integrands are smooth on a fixed neighborhood of this set, so all volume integrals tend to zero as $a\downarrow a_0$. The boundary flux also tends to zero by writing it as the integral of $\operatorname{div}_f(u\nabla(2s))$. Therefore $A_f,A_{f,\beta},V_f,V_{f,\beta}$ all tend to zero at $a_0$.

The nonnegative derivative formulas imply $A_f,A_{f,\beta}\ge0$. Solving the volume equations with their zero initial values gives
\begin{equation}\label{eq:negative-averages}
\begin{aligned}
V_f(a)&=a^{-m}\int_{a_0}^a t^{m-1}A_f(t)\,dt\ge0,\\
V_{f,\beta}(a)&=a^{2-m}\int_{a_0}^a t^{m-3}A_{f,\beta}(t)\,dt\ge0.
\end{aligned}
\end{equation}
Since $m<0$ and $m-2<0$, the identities
$aV_f'=A_f-mV_f$ and $aV_{f,\beta}'=A_{f,\beta}-(m-2)V_{f,\beta}$ give the volume signs.

For strictness, every sublevel with $a>a_0$ contains a minimum point $x$. At that point
\[
\Delta s^2=-\frac{mk}{2}s^2>0,\quad
D_m(x)=\frac{m-n}{mn}(\Delta s^2)^2>0,\quad \nabla v(x)=0.
\]
Equation \eqref{eq:5.13} consequently gives $\mathcal L_fv^\beta(x)>0$. Continuity yields strict positivity of all defect integrals, followed by $A_f,A_{f,\beta}>0$ and strict positivity of the volume derivatives. Local absolute continuity follows from the same coarea and divergence argument as before, on compact sets away from $p$.
\end{proof}

The change of direction in the area formula comes from the integration domain and its orientation: for $m>n$ one integrates over the exterior to avoid the pole, whose inner normal is $-\nu$; for $m<0$ the finite sublevel itself avoids the pole and its outward normal is $\nu$. The local Bochner sign has not changed. Statements about the limit $a\to\infty$ would again require estimates at the pole and are not part of Theorem \ref{Theorem 6.3}.

\section{Comparison with earlier work and further directions}\label{sec:comparison}

\paragraph{Unweighted and weighted precedents.}
The calculations retain the Green-function, Bochner, and level-set route of Colding, Colding--Minicozzi, and Manea \cite{Colding,ColdingMinicozzi,Manea}. Their role here is structural: the potential in $L_f$, the spherical variable, and the curvature corrections are adapted to the positive lower bound. Our additional terms record the difference between the metric trace and the diffusion trace. They must be kept both in the sign argument and in equality analysis. The case $m=n$, $f$ constant is recovered by restarting the ordinary Bochner calculation, rather than substituting into fractions containing $m-n$.

Song--Wei--Wu \cite{SongWeiWu} use $N$ for the dimension excess, so their total effective dimension $n+N$ corresponds to our $m$. Their formulas concern the weighted Laplacian on nonparabolic spaces; they introduce several independent exponents and explicitly impose a pole-integrability condition. Their introductory finite-dimensional monotonicity theorem uses $\beta\ge2$, and their later discussion also addresses smaller exponents. Thus neither weighted monotonicity nor the Green-pole obstruction is new here.

For comparison of the convergence conditions, their condition is
\[
(n-2)(q-p)-\beta(q-n)>0,
\]
where $q$ here denotes their Green-power parameter (called $k$ in their paper). Taking $q=m$, the choices $p=0$ and $p=2$ become, respectively,
\[
n+2\alpha-2-\beta(1-\alpha)>0,
\qquad n-2-\beta(1-\alpha)>0.
\]
For $\beta=2$, the first is exactly $n+4\alpha-4>0$, the condition for our basic volume kernel. The second is the condition for the leading term of $W_{f,v^\beta}$. In the present formulas the additional $v^\beta G$ correction also requires $\beta(1-\alpha)<2$. These comparisons follow directly from the powers in the radial integrals; they explain precisely which restrictions are shared and which arise from the positive-potential correction.

Methodologically, our proof keeps the exact trace-free Hessian and the extra square $D_m$, then performs the full level-set decomposition. This yields the effective-dimensional sufficient threshold $\beta\ge(m-2)/(m-1)$ for the present positive-curvature density, including its equality conditions. We do not claim that the exponent alone is a new phenomenon: refined Hessian estimates already underlie the parameter families in the preceding literature. Our results concern different operators and integral quantities, and do not contain all of the independent parameter choices or infinite-dimensional statements of Song--Wei--Wu.

In an unpublished paper \cite{WYZ}, the first author also obtained three monotonicity formulas for the weighted Green function on weighted Riemannian manifolds with nonnegative $m$-dimensional Bakry–Émery Ricci curvature, which can be regarded as a weighted analogue of the work of Colding–Minicozzi \cite{ColdingMinicozzi}.

\paragraph{Entropy monotonicity formulas}
A complementary source of monotonicity formulas in geometric analysis is the entropy method for parabolic equations. Perelman \cite{Perelman} introduced the \(\mathcal W\)-entropy for the Ricci flow coupled with the conjugate heat equation and established its monotonicity without a curvature sign assumption. This formula played a fundamental role in his analysis of noncollapsing and singularity formation. For a fixed Riemannian metric, L. Ni \cite{NL, NL2} developed an analogous entropy formula for the linear heat equation, obtaining monotonicity under nonnegative Ricci curvature. These results exhibit a common mechanism: the derivative of a suitably normalized functional is expressed through geometric quantities whose signs are controlled by the evolution equation or by a curvature assumption.
The corresponding theory on weighted Riemannian manifolds was developed by X.-D. Li \cite{LiXD}, who established a \(\mathcal W\)-entropy formula for the heat equation associated with the Witten Laplacian under suitable geometric hypotheses. In this setting, the finite-dimensional Bakry–Émery tensor replaces the Ricci tensor, and an additional square term records the discrepancy between the actual dimension and the effective dimension. 

%Further developments include Li’s work on Hamilton’s Harnack inequality and entropy formulas on complete manifolds [5], and the work of Songzi Li and Xiang-Dong Li on time-dependent metrics and potentials, super Ricci flows, and curvature-dimension conditions allowing negative curvature lower bounds [6,7]. Their warped-product interpretation also clarifies the geometric origin of the additional terms in the weighted entropy formula.

The elliptic monotonicity formulas of Colding and Colding–Minicozzi provide a parallel approach in which the evolving time parameter is replaced by the level-set parameter of a Green function or a positive harmonic function. Their structural relationship with parabolic entropy formulas lies in the use of Bochner identities, curvature terms, and Hessian defects to express monotonicity and detect geometric equality. 

This analogy motivates the present investigation of weighted elliptic formulas under a positive Bakry–Émery curvature lower bound. Nevertheless, the elliptic problem requires a separate analysis: its derivatives are obtained through level-set fluxes and coarea identities, and the persistent Green-function singularity imposes integrability and inner-boundary conditions that are not resolved by the parabolic entropy formulas. In particular, the effective dimension entering the curvature decomposition need not agree with the actual dimension governing the Green pole. The present work therefore develops the elliptic counterpart of this weighted Bochner mechanism while explicitly tracking the pole contributions and the resulting restrictions on monotonicity and equality.

Entropy monotonicity also extends to curvature lower bounds that allow negative curvature. J. Li and X. Xu \cite{LiXu} established differential Harnack inequalities and curvature-dependent entropy formulas for the linear heat equation under \(\operatorname{Ric}\ge-Kg\). Their Perelman-type entropy incorporates explicit time-dependent corrections determined by \(K\), and its derivative is expressed through a shifted Hessian square and the nonnegative curvature term \(\operatorname{Ric}+Kg\). At \(K=0\), this functional reduces to Ni’s entropy. Thus their work shows how the normalization of the entropy can compensate for a negative curvature lower bound.
S. Li and X.-D. Li \cite{LL} developed the corresponding weighted formula under \(\operatorname{Ric}_f^m\ge-Kg\), with \(m\ge n\) and \(K\ge0\), subject to the stated regularity and geometric assumptions. Their entropy dissipation formula contains the curvature excess \(\operatorname{Ric}_f^m+Kg\), a shifted Hessian square, and an additional square involving the weight and the dimension difference \(m-n\). They also extended this construction to suitable time-dependent metrics and potentials. These formulas provide a parabolic precedent for adapting monotonic quantities to a prescribed curvature lower bound. Here a negative curvature lower bound should be distinguished from a negative effective dimension: the cited results retain \(m\ge n\), whereas the case \(m<0\) considered in the present paper changes the dimension parameter itself.

The relationship with a positive curvature lower bound requires some care. A manifold satisfying \(\operatorname{Ric}_f^m\ge(m-1)kg\), with \(m>n\) and \(k>0\), also satisfies the nonnegative curvature condition, so the usual weighted entropy monotonicity remains available by taking \(K=0\). This application, however, does not exploit the magnitude of the positive lower bound or provide a spherical normalization. In the present elliptic problem, the positive constant enters both the Green operator \(-\Delta_f+m(m-2)k/4\) and the spherical change of variables, producing curvature-correcting level-set quantities. Its role therefore differs from the time-dependent correction used to compensate for a negative lower bound in the heat equation.

The common principle is the completion of geometric squares relative to the curvature bound, but the analytic constructions and equality conditions must be examined separately. In particular, the negative-lower-bound entropy formulas do not automatically identify hyperbolic space as an equality model, just as applying the nonnegative-curvature entropy formula on a positively curved manifold does not yield spherical rigidity. For the weighted elliptic formulas studied here, the compatibility of the equality equations with the smooth weight and the actual-dimensional Green singularity is an additional essential issue.

\paragraph{Positive-curvature $\mathrm{RCD}$ spaces.}
Gigli--Violo \cite{GigliViolo} established harmonic-function monotonicity in $\mathrm{RCD}(0,N)$ spaces, including rigidity and almost-rigidity questions. A natural next problem is a positive-curvature analogue on $\mathrm{RCD}((m-1)k,m)$ spaces with $m>2$. The natural candidate is a resolvent Green kernel for $-\Delta+m(m-2)k/4$, with $s=\tfrac12G^{1/(2-m)}$ and the same corrected integral kernels. The smooth identities suggest a proof strategy, but do not by themselves establish this extension.

At least four points require work. First, one must justify the kernel's pole behavior and the integrability of the corrected quantities; the local dimension need not equal the synthetic upper dimension. Second, the Bochner computation must be formulated using a measure-valued $\Gamma_2$ and the available Hessian calculus; the smooth term $df\otimes df/(m-n)$ cannot simply be written on an arbitrary $\mathrm{RCD}$ space. Third, level-set fluxes should be replaced by weak divergence and perimeter/coarea identities with the necessary representatives. Fourth, equality would have to be converted into a metric-measure suspension or warped-product conclusion, with the singular tips and their measures treated explicitly. These are proposed research problems, not conclusions of this paper. The $m<0$ calculation should likewise not be interpreted as a theorem in the usual positive-dimensional $\mathrm{RCD}$ class.

	\medskip
	\noindent\textbf{Disclosure on AI assistance.}

The authors used ChatGPT-6 Astra to assist with the calculation and derivation of some results. All proof ideas, key innovations, and scholarly contributions are the authors’ own. All content was reviewed, verified, and revised by the authors, who take full responsibility for its accuracy, originality, and entire content.

Yu-Zhao Wang, School of Mathematics and Statistics, Shanxi University, Taiyuan, 030006, Shanxi, China\\
Email: wangyuzhao@sxu.edu.cn\\

Zeng-Ting Wu, School of Mathematics and Statistics, Shanxi University, Taiyuan, 030006, Shanxi, China\\
Email: wuzengting@sxu.edu.cn
\end{document}